
\NeedsTeXFormat{LaTeX2e}

\newcommand{\showdate}{false}

\documentclass[reqno,a4paper]{amsart}
\usepackage{amssymb}
\usepackage[utf8]{inputenc}
\usepackage{a4wide}
\usepackage{enumitem}
\usepackage{pstricks}
\usepackage{ifthen}
\usepackage{comment}
\usepackage[all]{xy}

\title{New invariants of $G_2$-structures}
\author[D. Crowley]{Diarmuid Crowley}
\address{Institute of Mathematics,
University of Aberdeen,
Aberdeen AB24 3UE, UK}
\email{dcrowley@abdn.ac.uk}
\author[J. Nordström]{Johannes Nordström}
\address{Department of Mathematical Sciences,
University of Bath,
Bath BA2 7AY, UK}
\email{j.nordstrom@bath.ac.uk}

\ifthenelse{\boolean{\showdate}}{\date{\today}}{}

\usepackage{color}

\newcommand{\ignore}[1]{}

\DeclareMathOperator{\re}{Re}

\DeclareMathOperator{\im}{Im}

\DeclareMathOperator{\vol}{vol}

\DeclareMathOperator{\lcm}{lcm}

\DeclareMathOperator{\Sec}{Sec}
\DeclareMathOperator{\Clo}{Clo}
\DeclareMathOperator{\Tor}{Tor}
\newcommand{\ie}{\emph{i.e.} }
\newcommand{\eg}{\emph{e.g.} }
\newcommand{\cf}{\emph{cf.} }
\newcommand{\sign}{\sigma}
\newcommand{\tmfd}{Y}
\newcommand{\mpl}{m}
\newcommand{\cd}{k}
\newcommand{\cm}{p}

\newcommand{\pll}{\pi}
\newcommand{\hz}{h}
\newcommand{\td}{\tilde d}
\newcommand{\tdf}{\textstyle \frac{\td_\pi}{4}} 
\newcommand{\EK}{\mu}

\newcommand{\ahat}{\widehat{A}}
\newcommand{\dpi}[1]{d_\pi(#1)}
\newcommand{\mdiv}{d_o}
\newcommand{\dinfty}[1]{\mdiv(#1)}
\newcommand{\rdbar}{\widehat\varphi_{rd}}
\newcommand{\nubar}{\bar\nu}

\newcommand{\mmod}{\!\!\mod}

\newcommand{\half}{{\textstyle\frac{1}{2}}}

\newcommand{\quart}{\frac{1}{4}}
\newcommand{\threequart}{{\frac{3}{4}}}

\newcommand{\hoi}[1]{\left[#1\right)}

\newcommand{\bbz}{\mathbb{Z}}
\newcommand{\Z}{\mathbb{Z}}
\newcommand{\Q}{\mathbb{Q}}
\newcommand{\bbr}{\mathbb{R}}

\newcommand{\bbc}{\mathbb{C}}
\newcommand{\bbh}{\mathbb{H}}
\newcommand{\bbo}{\mathbb{O}}

\newcommand{\trivr}{\underline{\bbr}}
\newcommand{\spinr}{\Delta}
\newcommand{\spinrp}{\Delta^+}
\newcommand{\spinrn}{\Delta^-}
\newcommand{\spinrpm}{\Delta^\pm}
\newcommand{\spinrmp}{\Delta^\mp}
\newcommand{\spinb}{S}
\newcommand{\spinbp}{\spinb^+}
\newcommand{\spinbn}{\spinb^-}
\newcommand{\spinbpm}{\spinb^\pm}
\newcommand{\spins}{s}
\newcommand{\rcliff}{\widehat c}
\newcommand{\kd}{\Sigma}

\newcommand{\into}{\hookrightarrow}

\newcommand{\Diff}{\textup{Diff}}
\newcommand{\DiffSpin}{\Diff}
\newcommand{\ADiff}{\textup{ADiff}}
\DeclareMathOperator{\aut}{Aut}
\newcommand{\Num}{\textup{Num}}
\newcommand{\NumB}[1]{\Num\left(#1\right)}

\newcommand{\Id}{\textup{Id}}
\newcommand{\del}{\partial}

\newcommand{\calr}{\mathcal{R}}
\newcommand{\calv}{\mathcal{V}}

\newcommand{\GT}{\mathcal{G}_2}
\newcommand{\GTB}{\mathcal{\bar G}_2}
\newcommand{\GTH}{\pi_0\GT}
\newcommand{\GTD}{\pi_0\GTB}
\newcommand{\GTC}{\GT^{cc}}

\newcommand{\gtstr}{$G_{2}$\nobreakdash-\hspace{0pt}structure}
\newcommand{\gtstrsoft}{$G_{2}$-structure}

\newcommand{\gtmfd}{$G_{2}$\nobreakdash-\hspace{0pt}manifold}
\newcommand{\gtmfdsoft}{$G_{2}$-manifold}

\newcommand{\gtmetric}{$G_2$ metric}
\newcommand{\spstr}{$Spin(7)$-structure}
\newcommand{\sustr}{$SU(3)$-structure}

\newcommand{\gl}[1]{GL(#1, \bbr)}

\newcommand{\co}{\mathcal{O}}
\newcommand{\ow}{{\overline W}}
\newcommand{\anglen}{u}
\newcommand{\anglex}{v}
\newcommand{\wt}[1]{\widetilde #1}
\newcommand{\tm}{\mkern 4mu\widetilde{\mkern-4mu M\mkern-1mu}\mkern 1mu}
\newcommand{\tf}{\mkern 4mu\widetilde{\mkern-4mu F\mkern-1mu}\mkern 1mu}

\newcommand{\tv}{\tilde\varphi}
\newcommand{\tomega}{\tilde\omega}
\newcommand{\tOmega}{\tilde\Omega}

\newcommand{\gen}[1]{\langle#1\rangle}
\newcommand{\inner}[1]{\langle#1\rangle}

\newcommand{\contra}[1]{\textstyle\frac{\partial}{\partial #1}}

\newcommand{\divsp}{S_{d_\pi}}
 
\newtheorem{thm}{Theorem}[section]
\newtheorem{prop}[thm]{Proposition}
\newtheorem{lem}[thm]{Lemma}

\newtheorem{cor}[thm]{Corollary}

\theoremstyle{definition}
\newtheorem{defn}[thm]{Definition}

\theoremstyle{remark}
\newtheorem{rmk}[thm]{Remark}
\newtheorem{ex}[thm]{Example}

\setlist{leftmargin=*}
\frenchspacing
\raggedbottom

\begin{document}

\begin{abstract}
We define a $\bbz_{48}$-valued homotopy invariant $\nu(\varphi)$ of a \gtstr{}
$\varphi$ on the
tangent bundle of a closed $7$-manifold in terms of the signature and Euler
characteristic of a coboundary with a \spstr. For manifolds of holonomy $G_2$
obtained by the twisted connected sum construction, the associated torsion-free
\gtstr{} always has $\nu(\varphi) = 24$.
Some holonomy $G_2$ examples constructed by Joyce by desingularising orbifolds
have odd $\nu$.

We define a further homotopy invariant $\xi(\varphi)$ such
that if $M$ is 2-connected then the pair $(\nu,\xi)$ determines a
\gtstr{} up to homotopy and diffeomorphism.
The class of a \gtstr{} is determined by $\nu$ on its own when the greatest
divisor of $p_1(M)$ modulo torsion divides 224; this sufficient condition holds
for many twisted connected sum $G_2$-manifolds.


We also prove that the parametric h-principle holds for coclosed \gtstr s.
\end{abstract}

\maketitle

\ifthenelse{\boolean{\showdate}}{\vspace{-0.8\baselineskip}}{}
\vspace{-\baselineskip}

\section{Introduction} \label{sec:introduction}
In this paper we develop methods to determine when two
\gtstr s on a closed 7-manifold are deformation-equivalent, by which we mean
related by homotopies 
and diffeo\-morphisms. The main
motivation is to study the problem of deformation-equivalence of metrics with
holonomy~$G_2$. Such metrics can be defined in terms of torsion-free \gtstr s.
The torsion-free condition is a complicated PDE, but we ignore that and
consider only the \gtstr{} as a topological residue of the holonomy $G_2$
metric: for a pair of \gtmetric s to be deformation-equivalent, it is
certainly necessary that the associated \gtstr s are. One would not expect this
necessary condition to be sufficient since the torsion-free constraint is quite
rigid. A much weaker constraint on a \gtstr{} is for it to be coclosed, and we
find that the h-principle holds in this case: if two coclosed \gtstr s can
be connected by a path of \gtstr s then they can also be connected by a path of
coclosed \gtstr s.

\subsection{The $\nu$-invariant}
A \gtstr{} on a $7$-manifold $M$ is a reduction of the structure group
of the frame bundle of $M$ to the exceptional Lie group $G_2$.
As we review in \S \ref{subsec:strs}, a \gtstr{} on $M$ is equivalent to
a 3-form $\varphi \in \Omega^3(M)$ of a certain type and
we will therefore refer to such `positive' 3-forms as \gtstr s.
A \gtstr{} induces a Riemannian metric and spin structure on $M$.
Throughout this introduction $M$ shall be a closed connected spin
$7$-manifold and all \gtstr s $\varphi$ will be compatible with the
chosen spin structure.
We denote the space of all such \gtstr s by~$\GT(M)$.

We say that two \gtstr s are homotopic if they can be connected by a
continuous path of \gtstr s, so the set of homotopy classes
of \gtstr s on $M$ is $\GTH(M)$. 
The following observation is not new, but the closest statement we have found
in the literature is Witt \cite[Proposition 3.3]{witt06}. The proof is simple
and provided in \S \ref{subsec:D}. 

\begin{lem}  \label{lem:GTH=Z}
The group $H^7(M; \pi_7(S^7)) \cong \Z$ acts freely and transitively on $\GTH(M) \equiv \Z$.
%
%
\end{lem}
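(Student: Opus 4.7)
The plan is to identify $\GT(M)$ up to homotopy with sections of an associated $S^7$-bundle over $M$, and then apply standard obstruction theory.

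First, recall that a \gtstr{} determines a Riemannian metric and is compatible with the given spin structure. Since the space of compatible metrics is contractible, $\GTH(M)$ agrees with the set of homotopy classes of spin-compatible \gtstr s with one fixed Riemannian metric. Let $P \to M$ denote the principal $Spin(7)$-bundle associated with the spin structure. The group $Spin(7)$ acts transitively on the unit sphere of the real $8$-dimensional spin representation $\spinr$, and the stabiliser of a unit spinor is precisely $G_2$; this gives a diffeomorphism $Spin(7)/G_2 \cong S^7$. Consequently spin-compatible \gtstr s correspond to sections of the associated bundle $E := P \times_{Spin(7)} S^7 \to M$, inducing a bijection $\GTH(M) \cong \pi_0 \Gamma(E)$.

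Next, I would apply obstruction theory to the fibration $E \to M$. Since the fibre $S^7$ is $6$-connected, any two sections of $E$ restricted to the $6$-skeleton $M^{(6)}$ are homotopic through sections, and such a homotopy extends over the $6$-skeleton of $M \times I$. Comparing extensions across the top $7$-cells of $M$ gives a cellular cochain valued in $\pi_7(S^7) \cong \Z$; the local coefficient system is trivial because $S^7$ is simply connected and $M$ is orientable (the orientation being induced by the spin structure). The resulting cohomology class in $H^7(M; \pi_7(S^7))$ depends only on the homotopy classes of the two sections and vanishes precisely when the two sections are homotopic. Conversely, any cohomology class is realised by a suitable modification of a section on the top cells, so $\pi_0 \Gamma(E)$ is a torsor under $H^7(M; \pi_7(S^7))$.

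Since $\GT(M)$ is assumed to be non-empty, this torsor is non-empty, and we get the bijection $\GTH(M) \equiv \Z$ after choosing the orientation of $M$ to identify $H^7(M; \Z) \cong \Z$. The only non-routine ingredient is the identification $Spin(7)/G_2 \cong S^7$, which underlies the whole argument; this is classical, coming from the construction of $G_2$ as the stabiliser of a unit spinor. Everything else is bookkeeping in the standard obstruction theory for sections of a bundle whose fibre is highly connected relative to the dimension of the base.
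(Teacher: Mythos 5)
Your route is the one the paper itself takes: trade the metric away (the space of metrics is contractible), identify \gtstr s with unit spinor fields using $Spin(7)/G_2\cong S^7$, and apply obstruction theory to the resulting $S^7$-bundle, where the $6$-connectedness of the fibre puts the only obstruction in $H^7(M;\pi_7(S^7))$. Two small imprecisions in the obstruction-theoretic part: the triviality of the local coefficient system is not a consequence of $S^7$ being simply connected or of $M$ being orientable, but of the structure group $Spin(7)$ being connected, so that the monodromy acts on $\pi_7(S^7)\cong\Z$ by degree $+1$ (simple connectivity of the fibre is what makes the difference cochain well defined); and the final identification with $\Z$ uses that $M$ is closed, connected and oriented. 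These are cosmetic.

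The genuine gap is the asserted bijection $\GTH(M)\cong\pi_0\Gamma(E)$. In this paper a \gtstr{} is a positive $3$-form $\varphi$, and a unit spinor field determines $\varphi$ only up to the sign of the spinor: $s$ and $-s$ give the same $G_2$-reduction of the frame bundle, hence the same $3$-form. So the natural map $\Gamma(E)\to\GT(M)$ (metric fixed) is $2$-to-$1$, and on $\pi_0$ it is a bijection only once one knows that $s$ and $-s$ are homotopic through unit spinor fields; otherwise $\GTH(M)$ would only be the quotient of the $\Z$-torsor $\pi_0\Gamma(E)$ by an involution commuting with the $H^7$-action, and freeness and transitivity could fail. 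The paper closes exactly this point: a unit spinor gives a splitting $\spinb M\cong\trivr\oplus TM$, and since $e(TM)=0$ (as $\chi(M)=0$ for a closed oriented $7$-manifold) the summand $TM$ has a nowhere-vanishing section, so $s$ sits inside a trivial $2$-plane subbundle of $\spinb M$ in which it can be rotated to $-s$. You need this (or an equivalent argument) before the first displayed identification in your proof is legitimate; with it, the rest of your argument goes through as in the paper.
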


The group of spin diffeomorphisms of $M$, $\DiffSpin(M)$, acts by pull-back 
on $\mathcal{G}_2(M)$ with quotient $\GTB(M) : = \GT(M)/\DiffSpin(M)$.
Since $\GT(M)$ is locally path connected 
\[ \GTD(M) = \GTH(M)/\pi_0\DiffSpin(M), \]
and we call $\GTD(M)$ the set of deformation classes of \gtstr s on $M$.
Up until now neither invariants
of $\GTD(M)$ nor results about its cardinality have appeared in the literature.

Our starting point for studying both of these problems is the following
characteristic class formula, valid for any closed spin 8-manifold~$X$
(see Corollary \ref{cor:ahat}):
\begin{equation}
\label{eq:char}
e_+(X) = 24 \ahat(X) + \frac{\chi(X) - 3 \sign(X)}{2}.
\end{equation}
Here the terms are the integral of the Euler class of the positive
spinor bundle, and the $\ahat$-genus, ordinary Euler characteristic and
signature of $X$ ($\ahat(X)$ is an integer because $X$ is spin, and
$\sign(X) \equiv \chi(X) \mmod 2$ for any closed oriented $X$).
Moving from $Spin(8)$ to $Spin(7)$, if we use the (real dimension 8) spin
representation of $Spin(7)$ to regard $Spin(7)$ as a subgroup of $\gl8$, then a
$Spin(7)$-structure on an 8-manifold $X$ can be characterised by a certain kind
of 4-form $\psi \in \Omega^4(X)$.  
A $Spin(7)$-structure defines 
a spin structure and Riemannian metric on $X$, and (up to a sign) a unit spinor
field of positive chirality.
In particular, if a closed $8$-manifold $X$ has a $Spin(7)$-structure then
$e_+(X) = 0$, and \eqref{eq:char} implies
\begin{equation}
\label{eq:defect}
48\ahat(X) + \chi(X) -3\sign(X) = 0.
\end{equation}
If $W$ is a compact 8-manifold with boundary $M$ then a $Spin(7)$-structure on
$W$ induces a \gtstr{} on $M$. From \eqref{eq:defect} one deduces that the
``$\ahat$ defect'' $\chi(W) -3\sign(W) \mmod 48$
depends only on the induced \gtstr{} on $M$.
It turns out, see Lemma \ref{lem:spinbordism}, that any
\gtstr{} $\varphi$ on $M$ 
bounds a $Spin(7)$-structure on some compact 8-manifold and this allows us to
define an invariant $\nu(\varphi)$.

\begin{defn} \label{def:nu}
Let $(M, \varphi)$ be a closed spin $7$-manifold with \gtstr{} and
$Spin(7)$-coboundary $(W, \psi)$.  The $\nu$-invariant of $\varphi$ is the
residue 
\[ \nu(\varphi) : = \chi(W) - 3\sigma(W) \mod 48 ~~\in  \Z_{48}. \]
\end{defn}

This definition makes sense even if $M$ is not connected, and is
additive under disjoint unions.
Among the many analogous invariants in differential topology, perhaps the
one best known to non-topologists is Milnor's $\Z_7$-valued $\lambda$-invariant
of homotopy 7-spheres, defined as a ``$p_2$ defect'' of a spin
coboundary \cite{milnor56}.
To distinguish all 28 smooth structures on a
homotopy sphere one can use the Eells-Kuiper invariant $\mu$ \cite{eells62},
which is another $\ahat$ defect (see \eqref{eq:classic_ek}).

In \S\ref{subsec:Intro_affine} we describe how $\nu$ is related to Lemma
\ref{lem:GTH=Z} by interpreting \gtstr s in terms of spinor fields, and
we develop most of the theory in those terms.
However, the definition above is sometimes useful when dealing with
examples. It lets us compute $\nu$ from a
coboundary with the right type of 4-form, and finding such $4$-forms can be
easier than describing spinor fields directly, \eg in the proof of Theorem
\ref{thm:nu24} and Examples \ref{ex:rdnu} and \ref{ex:sq}.

Theorem \ref{thm:defnu} below summarises the basic properties of $\nu$.
Note that if $\varphi$ is a \gtstr{} on~$M$, then the 3-form $-\varphi$ is
also a \gtstr{}, but compatible with the \emph{opposite} orientation;
$-\varphi$~is a \gtstr{} on $-M$.
In addition, if $X$ is a closed $(2n{+}1)$--manifold, we define its rational
semi-characteristic by $\chi_\Q(X) := \sum_{i=0}^n b_i(X) \mmod 2$.

\begin{thm}
\label{thm:defnu}
For all \gtstr s $\varphi$ on $M$, 
$\nu(\varphi) \in \bbz_{48}$ is well-defined, and invariant under homotopies
and diffeomorphisms.  Hence $\nu$ defines a function
\begin{equation}
\nu : \GTD(M) \to \bbz_{48}.
\end{equation}
Moreover $\nu(-\varphi) = -\nu(\varphi)$, and
$\nu$ takes exactly the 24 values allowed by the parity constraint
\begin{equation} \label{eq:parity}
\nu(\varphi) \equiv \chi_\Q(M) \mod 2 .
\end{equation}
\end{thm}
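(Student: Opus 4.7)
The plan is to prove the theorem in stages: well-definedness of $\nu$, invariance under homotopies and diffeomorphisms, the sign-reversal $\nu(-\varphi) = -\nu(\varphi)$, the parity constraint, and finally surjectivity onto the 24 allowed residues.

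For well-definedness, given two $Spin(7)$-coboundaries $(W_1, \psi_1)$ and $(W_2, \psi_2)$ of $(M, \varphi)$, the approach is to form the closed spin $8$-manifold $X := W_1 \cup_M (-W_2)$ and apply \eqref{eq:char}. Mayer--Vietoris (using $\chi(M) = 0$) and Novikov additivity give $\chi(X) = \chi(W_1) + \chi(W_2)$ and $\sigma(X) = \sigma(W_1) - \sigma(W_2)$, so \eqref{eq:char} reduces the claim to computing $e_+(X) \pmod{24}$. The key observation is that $\psi_1$ furnishes a nowhere-zero section of $S^+(X)|_{W_1}$, so the Euler obstruction is localised on $-W_2$; since $S^+(X)|_{-W_2} = S^-(W_2)$ under orientation reversal, Clifford multiplication by the unit spinor determined by $\psi_2$ yields an isomorphism $TW_2 \cong S^-(W_2)$, and under this isomorphism the required boundary section corresponds to the outward normal field on $\partial W_2 = M$. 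By the classical Hopf theorem the resulting relative Euler class equals $\chi(W_2)$. Substituting $e_+(X) = \chi(W_2)$ into \eqref{eq:char} and rearranging yields $\chi(W_1) - 3\sigma(W_1) \equiv \chi(W_2) - 3\sigma(W_2) \pmod{48}$.

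The remaining invariance claims are short. A homotopy $\varphi_t$ from $\varphi_0$ to $\varphi_1$ is realised by a $Spin(7)$-structure on the cylinder $M \times I$, and attaching this to any coboundary of $\varphi_0$ gives a coboundary of $\varphi_1$ with the same $\chi$ and $\sigma$ (both vanish on $M \times I$). Diffeomorphism invariance is immediate since $\chi, \sigma$ are diffeomorphism invariants. For the sign reversal, the cylinder $M \times I$ with a pullback $Spin(7)$-structure is itself a $Spin(7)$-coboundary of the disjoint union $(M, \varphi) \sqcup (M, -\varphi)$, so additivity of $\nu$ under disjoint unions gives $\nu(\varphi) + \nu(-\varphi) \equiv 0 \pmod{48}$.

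For the parity constraint I would appeal to the classical semicharacteristic identity $\chi(W) + \sigma(W) \equiv \chi_\Q(\partial W) \pmod 2$, valid for any compact oriented $8$-manifold with boundary; since $\chi - 3\sigma \equiv \chi + \sigma \pmod 2$, this yields $\nu(\varphi) \equiv \chi_\Q(M) \pmod 2$. Finally, for surjectivity onto all $24$ residues of the correct parity, combine the $\bbz$-action of Lemma \ref{lem:GTH=Z} with an explicit local computation: the generator of this action is represented by a modification of $\varphi$ inside a ball, and the corresponding modification of a $Spin(7)$-coboundary changes $\chi - 3\sigma$ by $\pm 2$, so all allowed residues are attained. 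The main obstacle is the Clifford-theoretic identification $e_+(X) = \chi(W_2)$, on which well-definedness hinges.
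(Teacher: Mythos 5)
Your core computation is right and your route is essentially the paper's in different packaging: the paper also reduces everything to evaluating $e_+$ of a closed spin $8$-manifold via \eqref{eq:char}, working with the auxiliary quantity $\nubar(W,\varphi) = -2n_+(W,\varphi) + \chi(W) - 3\sign(W)$ for an \emph{arbitrary} spin coboundary and specialising to $Spin(7)$-coboundaries (where $n_+=0$) at the end; your identity $e_+(X)=\chi(W_2)$ is exactly Lemma \ref{lem:defnu}\ref{it:nureverse}--\ref{it:welldef} in disguise ($n_-(W,\varphi)=n_+(W,\varphi)-\chi(W)$ and $n_+(-W,-\varphi)=-n_-(W,\varphi)$), and your parity argument is the paper's \eqref{eq:sign_chi_mod2}. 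Two remarks on the details. First, the sign in your localisation step is not automatic: Clifford multiplication $\bbr^8\to\Delta^-$ by a fixed positive spinor is orientation-\emph{reversing} (the paper fixes this convention in \S\ref{subsec:spin78}), and the base orientation on the $W_2$-side of $X$ is also reversed; these two sign changes cancel to give $+\chi(W_2)$ rather than $-\chi(W_2)$. Since the wrong sign would yield a relation incompatible with well-definedness, this bookkeeping must be made explicit. Second, your cylinder argument for $\nu(-\varphi)=-\nu(\varphi)$ (additivity applied to the boundary $(M,\varphi)\sqcup(-M,-\varphi)$ of $M\times I$ --- note the second component is $-M$, not $M$) is cleaner than the paper's, which argues through $\nubar(-W,-\varphi)=-\nubar(W,\varphi)$.

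The genuine gap is that you never establish \emph{existence} of $Spin(7)$-coboundaries. ``$\nu(\varphi)$ is well-defined for all $\varphi$'' includes the assertion that every \gtstr{} bounds some \spstr, and the paper treats this as part of the proof of this theorem (it is Lemma \ref{lem:spinbordism}, proved in \S\ref{subsec:spin7bordisms}): one needs $\Omega^{Spin}_7=0$ to obtain a spin filling $W$, the observation that a compact spin $8$-manifold with non-empty boundary carries a nowhere-vanishing positive spinor field (so \emph{some} \gtstr{} on $M$ bounds a \spstr{} on $W$), and then connected sums with $S^4\times S^4$ or $T^8$, which change $e_+$ by $\pm1$ and $\chi-3\sign$ by $\pm2$, to make every homotopy class bound. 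Note that this is precisely the bookkeeping your surjectivity sketch relies on: the claim that the generator of the $\Z$-action on $\GTH(M)$ ``corresponds'' to a modification of the coboundary changing $\chi-3\sign$ by $\pm2$ amounts to saying that the relative Euler number of the positive spinor bundle over the modified coboundary changes by $\mp1$ and that the boundary structure bounds a \spstr{} exactly when this number vanishes; as written this is asserted rather than proved, so supplying the existence lemma would simultaneously complete both the well-definedness and the ``exactly 24 values'' parts of your argument.
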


Theorem \ref{thm:defnu} entails that $\GTD(M)$ has at least 24 elements.
Here are some related questions that motivate our investigations:

\begin{itemize}[labelindent=\parindent]
\item
What are the values of $\nu$ for torsion-free \gtstr s, \ie ones arising from
$G_2$ holonomy metrics?
Are there \gtmetric s on the same manifold that can be distinguished by $\nu$?

\item Do there exist \gtmetric s that are not deformation-equivalent, but whose
associated torsion-free \gtstr s belong to the same class in $\GTD(M)$?

\item
What is the cardinality of $\GTD(M)$?  For example, for which closed spin
manifolds $M$ is $\nu$ a complete invariant of $\GTD(M)$?
\end{itemize}
\noindent
We give partial answers to the first and third of these questions
below, and discuss directions for further research in~\S\ref{subsec:further}.

\subsection{The affine difference $D$, spinors and the $\nu$-invariant}
\label{subsec:Intro_affine}

An important feature of homotopy classes of \gtstr s is that the identification
$\GTH(M) \equiv \Z$ from Lemma \ref{lem:GTH=Z} should be regarded as affine,
or as a $\Z$-torsor:
there is no preferred base point, but 
Lemma \ref{lem:GTH=Z} has the following consequence.

\begin{lem}  \label{lem:defd}
For any pair of \gtstr s $\varphi, \varphi'$ on $M$ there is a difference
$D(\varphi, \varphi') \in \bbz$ such that
$(\GTH(M), \; D) \cong (\Z, \; \mathrm{subtraction})$, \ie
$D(\varphi, \varphi') = 0$ if and
only if $\varphi$ is homotopic to $\varphi'$, and
\begin{equation}
\label{eq:d_affine}
D(\varphi, \varphi') + D(\varphi', \varphi'') = D(\varphi, \varphi'') .
\end{equation}
\end{lem}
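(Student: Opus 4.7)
The statement is a formal consequence of Lemma \ref{lem:GTH=Z}: once one knows that $\GTH(M)$ carries a free and transitive action of the abelian group $H^7(M;\pi_7(S^7)) \cong \Z$, it automatically becomes a $\Z$-torsor, and every torsor over an abelian group possesses a canonical difference function. My plan is to extract this purely formal torsor structure and verify the two required properties.

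First, let $\alpha : \Z \times \GTH(M) \to \GTH(M)$ denote the action supplied by Lemma \ref{lem:GTH=Z}. Given any pair of \gtstr s $\varphi,\varphi'$ on $M$, transitivity of $\alpha$ guarantees the existence and freeness guarantees the uniqueness of an integer $n$ with $\alpha(n,[\varphi]) = [\varphi']$; define $D(\varphi,\varphi') := n$. Since $[\varphi]$ and $[\varphi']$ only depend on the homotopy classes of $\varphi$ and $\varphi'$, the function $D$ descends to a well-defined map $\GTH(M) \times \GTH(M) \to \Z$.

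The vanishing condition is immediate: $D(\varphi,\varphi') = 0$ means $\alpha(0,[\varphi]) = [\varphi']$, which, since $\alpha$ is a group action, says $[\varphi] = [\varphi']$, i.e. $\varphi$ and $\varphi'$ are homotopic through \gtstr s. For additivity \eqref{eq:d_affine}, set $m = D(\varphi,\varphi')$ and $n = D(\varphi',\varphi'')$; then
\[ \alpha(m+n,[\varphi]) = \alpha(n,\alpha(m,[\varphi])) = \alpha(n,[\varphi']) = [\varphi''], \]
so uniqueness forces $D(\varphi,\varphi'') = m+n$, as required.

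Finally, to identify $(\GTH(M),D)$ with $(\Z, \text{subtraction})$, fix any basepoint $\varphi_0$ and consider the bijection $\Psi : \GTH(M) \to \Z$, $\Psi([\varphi]) := D(\varphi_0,\varphi)$, which is bijective because $\alpha$ is free and transitive. The cocycle identity applied to the triple $(\varphi_0,\varphi,\varphi')$ gives
\[ D(\varphi,\varphi') = D(\varphi_0,\varphi') - D(\varphi_0,\varphi) = \Psi([\varphi']) - \Psi([\varphi]), \]
so $D$ corresponds to subtraction under $\Psi$. There is essentially no obstacle here: the content of the lemma lies entirely in Lemma \ref{lem:GTH=Z}, and the present statement simply unpacks what it means for $\GTH(M)$ to be a $\Z$-torsor.
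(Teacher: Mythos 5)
Your argument is correct: Lemma \ref{lem:GTH=Z} makes $\GTH(M)$ a $\Z$-torsor, and the difference function of a torsor over an abelian group has exactly the properties asserted, so the statement does follow formally, as the paper itself hints when it calls Lemma \ref{lem:defd} a ``consequence'' of Lemma \ref{lem:GTH=Z}. However, the paper's actual proof (in \S\ref{subsec:D}) takes a different, constructive route: it \emph{defines} $D(\varphi,\varphi')$ geometrically as the relative Euler number $n_+(M\times[0,1],\varphi,\varphi')$, i.e.\ the count of zeros of a positive spinor field on $M\times[0,1]$ interpolating between the spinors of $\varphi$ and $-\varphi'$, then checks the affine relation directly, shows $D=0$ iff the spinor can be chosen nowhere zero (so iff the structures are homotopic), and realises every integer by modifying the spinor on a $7$-disc, which simultaneously gives the identification $\GTH(M)\cong\Z$. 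The trade-off is this: your formal derivation is shorter and cleanly separates the torsor algebra from the topology, but it only establishes the \emph{existence} of some difference function; the paper needs the specific spinor-counting description of $D$ for everything that follows (the bordism formula \eqref{eq:dbord}, Lemma \ref{lem:d_speuler}, Lemma \ref{lem:defnu}\ref{it:affine}, Proposition \ref{prop:Dandp}), so the concrete construction is doing real work beyond proving the lemma as stated. Your proof is acceptable for the literal statement, but be aware that it defers, rather than replaces, the geometric content.
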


To understand the relationship between $D$ and $\nu$, we first explain the
reasoning which goes into the proof of Lemma \ref{lem:GTH=Z}.
As we describe in \S \ref{subsec:G2spinors}, a
choice of Riemannian metric and unit spinor field on the spin manifold $M$
defines a \gtstr. Because any two Riemannian metrics are homotopic, this sets up
a bijection between $\GTH(M)$ and homotopy classes of sections of the unit
spinor bundle. This is an $S^7$-bundle, and Lemma \ref{lem:GTH=Z} follows 
from obstruction theory for sections of sphere bundles.


We can both describe $D$ in concrete terms and prove Lemma \ref{lem:defd} by
counting zeros of homotopies of spinor fields (see \S \ref{subsec:D}).
With this understanding of $D$, the next lemma is elementary. The intuitive
notion of a $Spin(7)$-bordism is spelt out in \S \ref{subsec:spin7bordisms}.

\begin{lem}
\label{lem:d_speuler} 
Let $\varphi$, $\varphi'$ be \gtstr s on $M$. Suppose $(W,\psi)$ is a
$Spin(7)$-bordism from $(M, \varphi)$ to $(M, \varphi')$, and let $\ow$ be
the closed spin 8-manifold formed by identifying the two boundary components
(\cf \eqref{eq:close}). Then
\begin{equation}
\label{eq:d_speuler}
D(\varphi, \varphi') = -e_+(\ow) .
\end{equation}
\end{lem}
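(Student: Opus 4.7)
The plan is to use $\bar W$ as a cobordism that admits a nearly-everywhere non-vanishing section of $\spinbp$, and compute its Euler number by counting zeros localized in a collar.

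First I would unpack the definition of $D$ promised in \S\ref{subsec:D}: under the identification of \gtstr s with unit spinor fields (for a fixed metric, after a contractible choice), $D(\varphi,\varphi')$ is the signed count of zeros of a generic smooth path $s_t$ in $\Gamma(\spinb(M))$ joining $s_\varphi$ to $s_{\varphi'}$, viewed as a section of the pullback of $\spinb(M)$ to $M\times[0,1]$. The sign convention inherited from obstruction theory (Lemma \ref{lem:GTH=Z}) is exactly what identifies this count with the primary difference of the two sections in $H^7(M;\pi_7(S^7))\cong\Z$.

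Second, the \spstr{} $\psi$ on $W$ corresponds to a unit positive spinor field $s_\psi\in\Gamma(\spinbp(W))$ that restricts, on the two boundary components, to $s_\varphi$ and $s_{\varphi'}$ (under the isomorphism $\spinbp(W)|_{\partial W}\cong \spinb(\partial W)$ provided by a normal vector). On $\bar W$, fix a bicollar $M\times[-1,1]$ of the glued hypersurface; on the complement of $M\times(-\half,\half)$ the section $s_\psi$ descends to a nowhere-vanishing section of $\spinbp(\bar W)$, and on the two ends $M\times\{\pm\half\}$ it equals $s_{\varphi'}$ and $s_\varphi$ respectively (with the convention that the $-M$ end of $W$ is glued to the $M$ end). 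Now extend to a global section $\sigma$ of $\spinbp(\bar W)$ by picking a generic homotopy $s_t$ from $s_{\varphi'}$ to $s_\varphi$ in the collar slab $M\times[-\half,\half]$.

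Third, compute the signed zero count of $\sigma$ in two ways. All zeros lie in $M\times[-\half,\half]$, and by construction their signed total equals $D(\varphi',\varphi)=-D(\varphi,\varphi')$ by \eqref{eq:d_affine}. On the other hand, since $\spinbp(\bar W)$ is an oriented rank-$8$ real vector bundle over a closed oriented $8$-manifold, the signed zero count of any generic section equals $\langle e(\spinbp(\bar W)),[\bar W]\rangle=e_+(\bar W)$. Equating the two expressions yields \eqref{eq:d_speuler}.

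The main obstacle is bookkeeping the sign: one must check that the orientation the bicollar $M\times[-\half,\half]$ inherits from $\bar W$ (together with the outward/inward normal conventions used to identify $\spinbp(W)|_{\partial W}$ with $\spinb(M)$ on each boundary component) produces the homotopy \emph{from} $s_{\varphi'}$ \emph{to} $s_\varphi$ and not the reverse, thereby giving $-D(\varphi,\varphi')$ rather than $+D(\varphi,\varphi')$. All other ingredients are standard obstruction theory once the identification of spinor sections with \gtstr s and the boundary behaviour of $\spinbp$ under a \spstr{} coboundary have been set up in \S\ref{subsec:G2spinors}--\S\ref{subsec:spin7bordisms}.
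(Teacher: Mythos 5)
Your proposal is correct and is essentially the paper's own argument: the paper proves the general formula \eqref{eq:dbord}, $D(\varphi,\varphi') = n_+(W,\varphi,\varphi') - e_+(\ow)$, by exactly this glue-and-interpolate zero count in a collar of the former boundary, and then notes that the \spstr{} supplies a nowhere-vanishing positive spinor field on $W$ restricting correctly, so $n_+(W,\varphi,\varphi')=0$. You simply merge these two steps, and your sign bookkeeping (the slab contributing $D(\varphi',\varphi)=-D(\varphi,\varphi')$) agrees with the paper's conventions.
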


Combining Lemma \ref{lem:d_speuler} with the characteristic class formula \eqref{eq:char}, the
mod 24 residue of $D(\varphi, \varphi')$ can be
computed from just the signature and Euler characteristic of $\ow$, which
equal those of $W$. So while $D$ only makes sense as an ``affine'' invariant,
its mod 24 residue is related to the ``absolute'' invariant $\nu$
(in particular, $\nu$ is affine linear).

\begin{prop}
Let $\varphi$ and $\varphi'$ be \gtstr s on 
$M$. Then
\label{prop:d_nu}
\begin{equation}
\nu(\varphi') - \nu(\varphi) \equiv 2D(\varphi,\varphi') \mod 48 .
\end{equation}
\end{prop}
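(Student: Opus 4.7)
The plan is to realize both sides of the congruence using a single $Spin(7)$-bordism and to compare them via the characteristic class formula \eqref{eq:char}. By Lemma \ref{lem:spinbordism}, each of $\varphi$ and $\varphi'$ admits a $Spin(7)$-coboundary; fixing one such $(W_0, \psi_0)$ for $\varphi$, I would construct a $Spin(7)$-bordism $(W, \psi)$ from $(M, \varphi)$ to $(M, \varphi')$ (for instance, as $(-W_0) \cup_M W_1$ for any $Spin(7)$-coboundary $W_1$ of $\varphi'$), and let $\ow$ be the closed spin $8$-manifold formed by identifying the two boundary components of $W$, as in Lemma \ref{lem:d_speuler}.

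First I would evaluate the left-hand side. Since $W_0 \cup_M W$ is itself a $Spin(7)$-coboundary of $(M, \varphi')$, Definition \ref{def:nu} gives $\nu(\varphi') \equiv \chi(W_0 \cup W) - 3\sigma(W_0 \cup W) \mod 48$. Using $\chi(M) = 0$ (because $M$ is odd-dimensional), the Euler characteristic is additive under the gluing, and Novikov additivity handles the signature, yielding
\begin{equation*}
\nu(\varphi') - \nu(\varphi) \;\equiv\; \chi(W) - 3\sigma(W) \mod 48.
\end{equation*}

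Next, I would apply the same additivity facts to the gluing that produces $\ow$, getting $\chi(\ow) = \chi(W)$ and $\sigma(\ow) = \sigma(W)$. The characteristic class formula \eqref{eq:char} applied to the closed spin $8$-manifold $\ow$, multiplied by $2$, then yields
\begin{equation*}
\chi(W) - 3\sigma(W) \;=\; 2\, e_+(\ow) - 48\, \ahat(\ow) \;\equiv\; 2\, e_+(\ow) \mod 48.
\end{equation*}
Combining these two displays with Lemma \ref{lem:d_speuler}, which expresses $e_+(\ow)$ in terms of $D(\varphi, \varphi')$, produces the claimed congruence.

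The main obstacle is one of orientation and sign bookkeeping rather than any genuine mathematical difficulty: one must verify that the identifications used to form $W_0 \cup_M W$ and $\ow$ carry compatible orientations, spin structures, and (where relevant) $Spin(7)$-structures, and that the signs coming from Lemma \ref{lem:d_speuler} and from multiplying \eqref{eq:char} by $2$ combine correctly to give exactly $+2D(\varphi, \varphi')$ on the right of the congruence. Beyond that, the proof is a direct substitution of the two identities above.
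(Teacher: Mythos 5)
Your overall strategy---compose a $Spin(7)$-coboundary of one structure with a $Spin(7)$-bordism between the two structures, then compare via Lemma \ref{lem:d_speuler} and \eqref{eq:char}---does work, but two concrete steps are wrong as written, and the second is exactly where your unresolved sign lives. First, the parenthetical construction of the bordism as $(-W_0)\cup_M W_1$ fails: gluing two coboundaries of $M$ along all of $M$ produces a closed manifold, not a bordism with boundary $M\sqcup -M$, and in any case $-W_0$ need not carry any $Spin(7)$-structure, since the orientation of a $Spin(7)$-structure cannot be reversed (see the Remark in \S\ref{subsec:spin7bordisms}). You should instead simply invoke Lemma \ref{lem:spinbordism}\ref{it:spin7bord}, which says any two \gtstr s are $Spin(7)$-bordant. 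Second, with the paper's conventions a $Spin(7)$-bordism $W$ from $(M,\varphi)$ to $(M,\varphi')$ induces $\varphi$ on a positively oriented copy of $M$ and $-\varphi'$ on a copy of $-M$; gluing $\partial W_0$ (which also induces $\varphi$ on $+M$) to the $\varphi$-end of $W$ is therefore not orientation-compatible, so $W_0\cup_M W$ is not a $Spin(7)$-coboundary of $\varphi'$. Indeed, taking your two displays literally and inserting Lemma \ref{lem:d_speuler}, $D(\varphi,\varphi')=-e_+(\ow)$, yields $\nu(\varphi')-\nu(\varphi)\equiv -2D(\varphi,\varphi')$, the wrong sign. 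The repair is to reverse the direction: take $W$ a $Spin(7)$-bordism from $(M,\varphi')$ to $(M,\varphi)$, whose $-\varphi$-end glues to $\partial W_0$ (after homotoping both structures to product form in a collar, using that the two boundary spinor fields define the same \gtstr); then $W_0\cup_M W$ is a genuine coboundary of $\varphi'$, $e_+(\ow)=-D(\varphi',\varphi)=D(\varphi,\varphi')$, and the computation closes with the stated sign. Equivalently, glue a coboundary of $\varphi'$ to the $-\varphi'$-end of your $W$ and compute $\nu(\varphi)$ instead.

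Once repaired, this is a legitimate alternative to the paper's argument, which never glues $Spin(7)$-structures: the paper extends the defect to $\nubar(W,\varphi)=-2n_+(W,\varphi)+\chi(W)-3\sign(W)$ for an \emph{arbitrary} spin coboundary $W$, compares $\varphi$ and $\varphi'$ on the same $W$ via additivity of spinor zero counts (Lemma \ref{lem:defnu}\ref{it:affine} and Corollary \ref{cor:defnu}), and recovers Definition \ref{def:nu} by noting that $n_+(W,\varphi)=0$ when $W$ is a $Spin(7)$-coboundary. Your route stays entirely within the coboundary definition of $\nu$, at the cost of needing Lemma \ref{lem:spinbordism}\ref{it:spin7bord}, a collar gluing of $Spin(7)$-structures, and the facts $\chi(\ow)=\chi(W)$, $\sign(\ow)=\sign(W)$ for the self-glued manifold (which the paper also uses after Lemma \ref{lem:d_speuler}); the paper's route needs only one fixed spin coboundary and elementary zero counting.
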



\subsection{The $\nu$-invariant for manifolds with $G_2$ holonomy}  \label{subsec:Intro_HolG_2}

The exceptional Lie group $G_2$ also occurs as an exceptional case in the
classification of Riemannian holonomy groups due to Berger \cite{berger55}.
It is immediate from the definitions that a metric on a $7$-manifold
$M$ has holonomy contained in $G_2$ if and only if it is induced by
a \gtstr{} $\varphi \in \Omega^3(M)$ that is parallel.
The covariant derivative $\nabla \varphi$ of $\varphi$ with respect to the
Levi--Civita connection $\nabla$ of its induced metric can be identified with
the intrinsic torsion of the \gtstr, so metrics with holonomy in $G_2$
correspond to torsion-free \gtstr s \cite[Corollary 2.2, \S 11]{salamon89}.

One can define a moduli space of torsion-free \gtstr s on a fixed closed
\gtmfd{} $M$, which is an orbifold locally homeomorphic to finite quotients
of $H^3_{dR}(M)$. But while the
local structure is well understood, little is known about the global structure.
One basic question is whether the moduli space is connected, \ie whether any
pair of torsion-free \gtstr s are equivalent up to homotopies through
torsion-free \gtstr s and diffeomorphism. If one could find examples of
diffeomorphic \gtmfd s where the associated \gtstr s have different values
of $\nu$, this would prove that the moduli space is disconnected.

Finding compact manifolds with holonomy $G_2$ is a hard problem.
The known constructions solve the non-linear PDE 
$\nabla \varphi = 0$ using gluing methods.
Joyce \cite{joyce96-I} found the first examples by desingularising flat
orbifolds, and later Kovalev \cite{kovalev03} implemented a `twisted connected
sum' construction. In \cite{g2m}, the classification theory of closed
\mbox{2-connected} \mbox{7-manifolds} is used to find examples of twisted
connected sum \gtmfd s that are diffeomorphic, but without
any evidence either way as to whether the torsion-free \gtstr s are in the same
component of the moduli space.

The twisted connected sum $G_2$-manifolds are constructed by gluing a pair of pieces of the
form $S^1 \times V$, where $V$ are asymptotically cylindrical Calabi--Yau
3-folds with asymptotic ends $\bbr \times S^1 \times K3$.  We review this
construction in \S \ref{subsec:tcs} and then compute $\nu$ for
all such \gtstr s.

\begin{thm}
\label{thm:nu24}
If $(M, \varphi)$ is a twisted connected sum 
then $\nu(\varphi) = 24$.
\end{thm}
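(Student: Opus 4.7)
The plan is to construct an explicit $Spin(7)$-coboundary $(W,\psi)$ for $(M,\varphi)$ reflecting the twisted connected sum decomposition $M = (S^1 \times V_+) \cup (S^1 \times V_-)$, and then to evaluate $\chi(W) - 3\sigma(W) \bmod 48$.

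First, the inclusion $SU(3) \subset SU(4) \subset Spin(7)$ promotes the Calabi--Yau structure on each $V_\pm$ to a $Spin(7)$-structure on $D^2 \times V_\pm$ (with $D^2$ the unit disk in $\mathbb{C}$), and the boundary $G_2$-structure on $S^1 \times V_\pm$ is the one used in the TCS construction. Truncating the cylindrical ends yields two compact $8$-manifolds $D^2 \times \bar V_\pm$, each with excess boundary $D^2 \times (S^1 \times K3)$. These excess pieces combine, via the genus-1 Heegaard decomposition $S^3 = (D^2 \times S^1) \cup (S^1 \times D^2)$, to form $\partial(D^4 \times K3)$, and the TCS twist precisely matches the swap of the two solid tori in this decomposition. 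Filling in with $D^4 \times K3$ equipped with the $Spin(7)$-structure coming from the $SU(4)$-structure on $\mathbb{C}^2 \times K3$, one obtains a compact $Spin(7)$-manifold
\[
W \;=\; (D^2 \times \bar V_+) \cup_{D^2 \times S^1 \times K3} (D^4 \times K3) \cup_{S^1 \times D^2 \times K3} (D^2 \times \bar V_-)
\]
with $(\partial W, \psi|_{\partial W}) = (M, \varphi)$.

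Next I compute the characteristic invariants. A displacement argument in each $D^k$-factor ($k=2,4$) shows that the intersection form on $H^4(D^k \times Y, \partial)$ vanishes for each piece, so by Novikov additivity $\sigma(W) = 0$. Multiplicativity of $\chi$ under products, together with inclusion-exclusion (the gluing regions $D^2 \times S^1 \times K3$ contain an $S^1$-factor, so have vanishing Euler characteristic), gives $\chi(W) = \chi(\bar V_+) + \chi(K3) + \chi(\bar V_-)$. Writing $\bar V_\pm = Z_\pm \setminus (D^2 \times K3)$ for the compact building block $Z_\pm$, one has $\chi(\bar V_\pm) = \chi(Z_\pm) - 24$, so $\chi(W) = \chi(Z_+) + \chi(Z_-) - 24$.

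The final step is to verify that this evaluates to $24 \bmod 48$ independently of the chosen building blocks. This is the main obstacle: a priori the right-hand side is sensitive to the topology of $Z_\pm$, yet $\nu(\varphi)$ is claimed to be universal. I expect the required congruence to be forced by the structural constraints of the TCS matching data combined with a careful check that the $Spin(7)$-structure $\psi$ on $W$ restricts to the actual TCS $G_2$-structure $\varphi$, rather than to a homotopic but distinct $G_2$-structure on $M$ obtained by a phase rotation of the holomorphic volume form $\Omega_\pm$. Making the relevant congruence explicit and correctly absorbing this gauge choice into the construction of $W$ is the delicate heart of the argument.
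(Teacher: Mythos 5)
Your construction of a direct $Spin(7)$-coboundary is an appealing idea, but the proof has a genuine gap at exactly the point you flag, and moreover one of the steps you do assert is incorrect. First, Novikov additivity does not apply to your $W$: the three pieces are glued along codimension-zero submanifolds of their boundaries (the regions $D^2\times S^1\times K3$ and $S^1\times D^2\times K3$, meeting corners along $T^2\times K3$), and in this situation one must use Wall's non-additivity formula, which in general produces a nonzero correction term coming from the $K3$ lattice. Indeed $\sigma(W)=0$ cannot be right: if your $\psi$ really restricted to $\varphi$, then $\nu(\varphi)\equiv\chi(W)-3\sigma(W)$ would equal $\chi(Z_+)+\chi(Z_-)-24 \bmod 48$, which visibly depends on the building blocks, contradicting the statement being proved. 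So the block-dependence does not disappear by a ``hidden congruence'' on $\chi(Z_\pm)$ --- it has to be absorbed by a nonzero, block-dependent signature term (or else the structure does not restrict to $\varphi$), and computing that term is precisely the part you leave as an expectation. A second unresolved point is the existence of the $Spin(7)$-structure itself: near the gluing regions the product $SU(4)$-structures on $D^2\times V_\pm$ and on $D^4\times K3$ do not agree, because the two sides are matched by a hyper-K\"ahler rotation ($f^*\omega^I_-=\omega^J_+$, etc.) and the cylindrical Calabi--Yau asymptotics must be interpolated with the flat $\mathbb{C}^2$-factor; writing down this interpolation and checking that the boundary $G_2$-structure is $\varphi$ (not a homotopic perturbation) is nontrivial and is not done.

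For comparison, the paper avoids both problems by not seeking a coboundary at all: it builds a $Spin(7)$-\emph{bordism} (a ``keyhole'' bordism $W$ with $\chi(W)=0$) from $(M,\varphi)$ to two product structures, on the untwisted sum $S^1\times N$ (where $\nu=0$ by Lemma \ref{lem:nu_su3}) and on $T_r\times K3$ (where $\nu=24$ by Lemma \ref{lem:nu_su2}); all block-dependent topology is carried into the $S^1\times N$ end, whose $\nu$ vanishes, and the only nontrivial contribution is $\sign(W)=-16$, computed via Wall's non-additivity formula from the $K3$ intersection form, giving $\nu(\varphi)=0+24+0-3(-16)\equiv 24$. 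If you want to salvage your route, the analogous Wall-type computation for your corners, together with an explicit interpolation of the $SU(4)$-structures absorbing the hyper-K\"ahler rotation, is what would have to replace the missing ``delicate heart''.
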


We carry out this calculation 
by finding an explicit $Spin(7)$-bordism from
a twisted connected sum \gtstr{} $\varphi$ to a \gtstr{} that is a product of structures
on lower-dimensional manifolds, for which $\nu$ is easier to evaluate.

For all the explicit examples of pairs of diffeomorphic \gtmfd s found in
\cite{g2m}, Corollary \ref{cor:nuauto} below implies
that $\nu$ classifies the homotopy classes of \gtstr s up to diffeomorphism.
Thus diffeomorphisms between these \gtmfd s can always be chosen so that the
corresponding torsion-free \gtstr s are homotopic. Theorem \ref{thm:hp}
implies that they are then also homotopic as coclosed \gtstr s, but
the question whether they can be connected by a path of torsion-free \gtstr s,
so that they are in the same component of the moduli space of \gtmetric s,
remains open.

Theorem \ref{thm:nu24} does not necessarily apply to more general gluings of
asymptotically cylindrical \gtmfd s. For example, a small number of the
\gtmfd s $M$ constructed by Joyce \cite[\S 12.8.4]{joyce00} have
$\chi_\Q(M) = 1$, so those torsion-free \gtstr s have odd $\nu \not= 24$; yet
they can be regarded at least topologically as a gluing of asymptotically
cylindrical manifolds.

\subsection{The h-principle for coclosed \gtstrsoft s}

We call a \gtstr{} with defining 3-form $\varphi$ \emph{closed} if
$d\varphi = 0$ and \emph{coclosed} if $d^*\varphi = 0$, where $d^*$ is defined
in terms of the metric induced by the \gtstr. For $\varphi$ to be torsion-free
is equivalent to it being both closed and coclosed (Fern\'andez--Gray
\cite{fernandez82}). Individually, the conditions
of being closed or coclosed are much more flexible than the torsion-free
condition, and we show that coclosed \gtstr s satisfy the h-principle.
Let $\GTC(M) \subset \GT(M)$ be the subspace of coclosed \gtstr s.

\begin{thm}
\label{thm:hp}
The inclusion $\GTC(M) \into \GT(M)$ is a homotopy equivalence.
\end{thm}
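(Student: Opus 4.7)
The plan is to reformulate the coclosed condition on $\varphi$ as a closed-form condition on its dual 4-form, and then invoke Gromov's $h$-principle for closed sections of an open, diffeomorphism-invariant differential relation.

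First I would exploit the Hitchin-type duality between positive 3-forms and ``stable'' 4-forms on a 7-manifold. A positive 3-form $\varphi$ determines a metric $g_\varphi$ and Hodge star $*_\varphi$, and the 4-form $\Theta(\varphi) := *_\varphi\varphi$ lies in a single open $GL_+(7,\bbr)$-orbit of $\Lambda^4 T^*M$. The assignment $\varphi \mapsto \Theta(\varphi)$ is therefore a $\Diff(M)$-equivariant diffeomorphism
\[
\Theta : \GT(M) \xrightarrow{\;\cong\;} \Omega^4_+(M)
\]
onto the open subspace $\Omega^4_+(M) \subset \Omega^4(M)$ of stable 4-forms. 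Under $\Theta$ the coclosed condition $d^*\varphi = 0$ becomes the closedness $d\Theta(\varphi) = 0$, so $\GTC(M)$ is identified with $\Omega^4_+(M) \cap Z^4(M)$, the closed stable 4-forms.

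Next I would apply Gromov's $h$-principle for closed sections of an open, $\Diff(M)$-invariant differential relation. Since $\Omega^4_+(M) \subset \Omega^4(M)$ is open and $\Diff(M)$-invariant, and the form degree $k=4$ satisfies $k < n = \dim M = 7$, Gromov's theorem asserts that
\[
\Omega^4_+(M) \cap Z^4(M) \hookrightarrow \Omega^4_+(M)
\]
is a weak homotopy equivalence. Transporting via $\Theta^{-1}$ yields the analogous weak equivalence $\GTC(M) \into \GT(M)$; since both spaces arise as smooth subsets of the function space $\Omega^3(M)$ they have the homotopy type of CW complexes, and Whitehead's theorem upgrades the weak equivalence to a genuine homotopy equivalence.

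The main obstacle is verifying that the relevant parametric version of Gromov's $h$-principle applies in this infinite-dimensional function-space setting. The key input is that closedness is a first-order linear differential relation whose principal symbol is surjective, so formal closedness imposes no jet-level obstruction; the $h$-principle then follows from convex integration, for which the strict inequality $k < n$ is essential (the analogous statement fails for top-degree positive forms, where the cohomology class dominates the moduli). Additional care is needed to ensure that the nonlinear map $\Theta$ transports the parametric statement faithfully from 4-forms back to 3-forms and respects the topology on function spaces.
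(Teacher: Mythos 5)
Your first step---trading the coclosed \gtstr{} $\varphi$ for the closed positive $4$-form $*\varphi$, using that positive $4$-forms form a single open orbit and (given the orientation) determine the \gtstr{}---is exactly how the paper sets things up. The gap is in the second step. There is no theorem of Gromov asserting that for an open, $\Diff$-invariant subset of $\Lambda^k T^*M$ with $k<\dim M$ the closed sections include as a weak equivalence into all sections \emph{when $M$ is closed}: the Gromov/Eliashberg--Mishachev result you are invoking (Theorem 10.2.1 of \cite{eliashberg02}) requires the manifold to be \emph{open}. The hypothesis ``$k<n$'' does not rescue it: nondegenerate $2$-forms on a closed $2m$-manifold ($k=2<n$) are an open $\Diff$-invariant relation for which both the existence and the parametric h-principle for closed forms fail, and the paper itself points out that closed \gtstr s (positive $3$-forms, $k=3<7$) appear to behave like the symplectic case, so your criterion would prove a statement the authors explicitly do not expect. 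The appeal to convex integration does not fill the hole either: on a closed manifold convex integration needs ampleness of the relation, which is neither asserted nor obvious for positive $4$-forms, and the surjectivity of the symbol of $d$ is irrelevant to the global problem.

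What the paper actually does at this point is a microextension trick, and this is the genuinely necessary extra idea. For an $8$-dimensional vector space $W$ one defines $\calr(W)\subset\Lambda^4W^*$ as the set of $4$-forms restricting to a \emph{positive} $4$-form on every hyperplane; this is open (by compactness of the Grassmannian) and $\Diff$-invariant, and---crucially---every positive $4$-form on a hypersurface extends to an element of $\calr$, via $\sigma\mapsto \sigma+dt\wedge *\sigma$ corrected by a term $t\,d(*\sigma)$ so that closedness of $\sigma$ gives closedness of the extension. One then applies \cite[Theorem 10.2.1]{eliashberg02} on the \emph{open} $8$-manifold $M\times(-\epsilon,\epsilon)$ and restricts the resulting homotopy of closed sections of $\calr$ back to $M\times\{0\}$, where it becomes a homotopy through closed positive $4$-forms. (It is precisely the extension property of $\calr$ that is special to the coclosed condition and unavailable for closed \gtstr s or symplectic forms.) Your proposal, as written, skips this reduction and therefore rests on a nonexistent h-principle on the closed manifold $M$; to repair it you would need either the microextension argument or a proof of ampleness of the positivity relation, neither of which is supplied.
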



If $M$ is an open manifold then Theorem \ref{thm:hp} is a straight-forward
application of Theorem 10.2.1 from Eliashberg--Mishachev \cite{eliashberg02}
(\cf Lê \cite[Theorem-Remark 3.17]{le06}). h-principles are generally much
harder to prove on closed manifolds, but for coclosed \gtstr s we can use a
micro\-extension trick to reduce the problem to an application
of \cite[Theorem 10.2.1]{eliashberg02} on $M \times (-\epsilon,\epsilon)$.
(There is no apparent way to apply the same trick to closed \gtstr s, which
seem closer to symplectic structures in this sense.)

One motivation for considering coclosed \gtstr s is that they are the
structures induced on 7-manifolds immersed in 8-manifolds with holonomy
$Spin(7)$. One can attempt to construct $Spin(7)$ metrics on
$M \times (-\epsilon, \epsilon)$ using the `Hitchin flow' of coclosed \gtstr s
\cite{hitchin01}. Bryant \mbox{\cite[Theorem 7]{bryant10}} shows that this can
be solved provided that the initial coclosed \gtstr{} is real analytic.

Theorem \ref{thm:hp} implies that any spin 7-manifold $M$ admits smooth
coclosed \gtstr s. When $M$ is closed, Grigorian \cite{grigorian13} proves
short-time existence of solutions $\varphi_t$ for a version of the `Laplacian
coflow' of coclosed \gtstr s. Even if the initial \gtstr{} $\varphi_0$ is
merely smooth, the coclosed \gtstr s $\varphi_t$ will be real analytic for
$t > 0$ (sufficiently small so that the solution exists). 
As a consequence, we deduce the following

\begin{cor}
For every closed spin 7-manifold $M$, $M \times (-\epsilon, \epsilon)$ admits 
torsion-free $Spin(7)$-structures.
\end{cor}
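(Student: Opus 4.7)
The corollary is essentially a three-step chain of applications of the results just quoted, so my plan is to assemble them carefully.

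First, I would verify that $\GTC(M)$ is non-empty. By Theorem~\ref{thm:hp}, $\GTC(M) \into \GT(M)$ is a homotopy equivalence, so in particular it suffices to show $\GT(M) \neq \emptyset$. Any closed spin $7$-manifold admits a $G_2$-structure: the obstructions to reducing the structure group of the tangent bundle from $SO(7)$ to $G_2$ are precisely the vanishing of $w_2$, which holds because $M$ is spin. (Alternatively, one may invoke the description via sections of the unit spinor bundle recalled in \S\ref{subsec:G2spinors}: any nowhere-vanishing section exists by obstruction theory, since the relevant Euler class lives in $H^8(M;\Z)=0$.) Hence $\GT(M)\neq \emptyset$, and Theorem~\ref{thm:hp} yields a smooth coclosed \gtstr{} $\varphi_0$ on $M$.

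Next, I would apply Grigorian's short-time existence result for the (modified) Laplacian coflow, with initial data the merely smooth coclosed \gtstr{} $\varphi_0$. This produces a family $\varphi_t$ of coclosed \gtstr s on $M$ defined for $t\in[0,T)$, and as recalled in the paragraph preceding the corollary, each $\varphi_t$ for $t>0$ is real analytic on $M$. Fix any $t_0\in(0,T)$ and set $\varphi_* := \varphi_{t_0}$; then $\varphi_*$ is a real analytic coclosed \gtstr{} on $M$.

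Finally, I would feed $\varphi_*$ into Bryant's theorem \cite[Theorem~7]{bryant10}. Since the initial datum is real analytic, the Hitchin flow starting at $\varphi_*$ can be solved on a one-sided collar, and by flowing both forward and backward in time (or equivalently by applying the theorem to $\varphi_*$ on both $M$ and $-M$) one obtains a $Spin(7)$-structure $\psi$ on $M\times(-\epsilon,\epsilon)$ for some $\epsilon>0$ whose restriction to $M\times\{0\}$ induces $\varphi_*$; by construction $\psi$ is torsion-free, since Hitchin's flow is precisely the evolution equation ensuring $d\psi=0$ (and $\psi$ being a $Spin(7)$-form automatically entails torsion-freeness of the induced structure).

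The only part that is not a direct citation is the non-emptiness of $\GT(M)$, but this is standard. The main conceptual point is that Theorem~\ref{thm:hp} is what unlocks the chain: without it one would have no \emph{a priori} guarantee of a smooth coclosed \gtstr{} to feed into the Laplacian coflow, and thus no mechanism to upgrade to real analyticity and invoke Bryant's theorem.
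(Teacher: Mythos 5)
Your proposal is correct and follows exactly the argument the paper itself gives in the paragraph preceding the corollary: Theorem~\ref{thm:hp} (plus the standard fact that spin $7$-manifolds admit \gtstr s) produces a smooth coclosed \gtstr, Grigorian's Laplacian coflow upgrades it to a real analytic coclosed \gtstr{} at small positive time, and Bryant's theorem then solves the Hitchin flow to yield a torsion-free $Spin(7)$-structure on $M \times (-\epsilon,\epsilon)$. Nothing essential differs from the paper's reasoning.
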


\subsection{Counting deformation classes of $G_2$-structures} \label{subsec:Intro_Diff}
We can think of the set of deformation-equivalence classes of \gtstr s as the
quotient (isomorphic to $\GTD(M)$) of $\GTH(M)$ under the action
\[ \GTH(M) \times \DiffSpin(M) \to \GTH(M), \quad
([\varphi], f) \mapsto [f^*\varphi] .\]
The deformation invariance of $\nu$ implies that this action on
$\GTH(M) \cong \Z$ is by translation by multiples of 24, so that $\GTD(M)$
has at least 24 elements. To determine to what extent $\nu$ classifies elements
of $\GTD(M)$ we need to understand precisely which multiples of 24 are realised
as translations.
Combining the characteristic class formula \eqref{eq:char} with Lemma
\ref{lem:d_speuler} we arrive at 

\begin{prop} 
 Let $f \colon M \cong M$ be a spin diffeomorphism with mapping torus $T_f$.
Then
\label{prop:Dshift}
 \[ D(\varphi, f^*\varphi) = 24 \ahat(T_f) \in \Z.\]
  %
\end{prop}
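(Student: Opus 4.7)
The plan is to realise the mapping torus $T_f$ as the closed spin 8-manifold $\ow$ produced by Lemma \ref{lem:d_speuler} for a carefully chosen $Spin(7)$-bordism, and then to evaluate $e_+(T_f)$ via the characteristic class identity \eqref{eq:char}.

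First, I would take $W = M \times [0,1]$ equipped with the product $Spin(7)$-structure built from $\varphi$ and $dt$, and parametrise its two boundary components by $M$ as follows: the boundary at $t=0$ via the identity, and the boundary at $t=1$ via $f$, i.e.\ identifying $x \in M$ with $(f(x),1) \in M \times \{1\}$. Under these parametrisations the induced boundary \gtstrsoft{} at $t=0$ is $\varphi$, while at $t=1$ it is $f^*\varphi$, so $W$ is a $Spin(7)$-bordism from $(M,\varphi)$ to $(M, f^*\varphi)$. Closing up by identifying the two boundary components in accordance with their parametrisations then glues $(x,0)$ to $(f(x),1)$, so $\ow$ is precisely the mapping torus $T_f$, carrying the spin structure inherited from $M$ together with the spin diffeomorphism $f$. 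Lemma \ref{lem:d_speuler} now reads $D(\varphi, f^*\varphi) = -e_+(T_f)$, up to an orientation convention on $T_f$.

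Next, I would apply \eqref{eq:char} with $X = T_f$. The fibration $M \hookrightarrow T_f \to S^1$ gives $\chi(T_f) = \chi(S^1)\chi(M) = 0$. To see that $\sign(T_f) = 0$, the Wang exact sequence produces a filtration on $H^4(T_f;\R)$ whose associated graded is $\coker(1 - f^* \colon H^3(M;\R) \to H^3(M;\R)) \oplus \ker(1 - f^* \colon H^4(M;\R) \to H^4(M;\R))$. The first summand embeds in $H^4(T_f;\R)$ as classes of the form $\pi^*[dt] \cup \alpha$, and is therefore isotropic for the intersection form because $(\pi^*[dt])^2 = 0$. Poincar\'e duality on $M$ pairs $H^3(M;\R)$ with $H^4(M;\R)$ in an $f$-equivariant fashion, forcing $\dim\coker(1-f^* \colon H^3) = \dim \ker(1-f^* \colon H^4)$, so this isotropic subspace is Lagrangian and $\sign(T_f) = 0$. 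Hence \eqref{eq:char} collapses to $e_+(T_f) = 24\ahat(T_f)$, which combined with the previous step yields the claimed formula.

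The main obstacle is sign bookkeeping: one must verify that the combination of the product $Spin(7)$-structure on $W$, the chosen boundary parametrisations, and the gluing rule of Lemma \ref{lem:d_speuler} produces $\ow = T_f$ with the orientation for which $e_+(\ow) = -24\ahat(T_f)$, so that the final answer is $+24\ahat(T_f)$ rather than its negative. Replacing $f$ by $f^{-1}$ in the outgoing boundary parametrisation reverses the orientation of $\ow$, providing a fall-back if the signs initially come out inverted.
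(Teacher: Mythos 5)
Your route is essentially the paper's own proof: take $W = M \times [0,1]$ with the product $Spin(7)$-structure, observe that closing it up along the boundary parametrisations $(\Id,0)$ and $(f,1)$ yields exactly the mapping torus $T_f$, apply Lemma \ref{lem:d_speuler}, and then use the characteristic class identity \eqref{eq:char} together with $\chi(T_f)=\sign(T_f)=0$. Your Wang-sequence/Lagrangian argument for $\sign(T_f)=0$ is a welcome expansion of a step the paper merely asserts, and $\chi(T_f)=0$ is immediate as you say.

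The one substantive slip is the direction of the bordism, and it is exactly where your acknowledged sign worry lives. With the paper's conventions the product structure $dt\wedge\varphi + *\varphi$ induces $\varphi$ on $M\times\{1\}$ but $-\varphi$ on $M\times\{0\}$ regarded as $-M$; so with your parametrisations (identity at $t=0$, the embedding $x\mapsto(f(x),1)$ at $t=1$, which pulls the contraction back to $f^*\varphi$) the product is a $Spin(7)$-bordism from $(M,f^*\varphi)$ to $(M,\varphi)$, not from $(M,\varphi)$ to $(M,f^*\varphi)$. Lemma \ref{lem:d_speuler} then gives $D(f^*\varphi,\varphi) = -e_+(T_f)$, hence $D(\varphi,f^*\varphi) = +e_+(T_f) = 24\ahat(T_f)$ by \eqref{eq:char} with $\chi=\sign=0$ --- there is no residual orientation ambiguity to resolve once the conventions of \S\ref{subsec:strs} and \S\ref{subsec:spin7bordisms} are applied. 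Note also that your proposed fall-back cannot repair a wrong sign: replacing $f$ by $f^{-1}$ in the outgoing parametrisation produces the bordism computing $D(\varphi,(f^{-1})^*\varphi)$ with closed-up manifold $T_{f^{-1}}\cong -T_f$, so both sides of the identity change sign together; a derivation yielding $D(\varphi,f^*\varphi)=-24\ahat(T_f)$ for all $f$ would remain self-consistent under this swap and simply be false. The fix has to come from tracking the boundary conventions, as above, not from relabelling $f$.
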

%
%

The possible values of $\ahat(T_f)$ are closely related to the spin
characteristic class $p_M : = \frac{p_1}{2}(M)$ (see \S \ref{subsec:p_1/2}).
More precisely, the theory developed in \cite{7class} identifies the 
following two key quantities:
\[ \mdiv(M) :=
\left \{ \begin{array}{cl} 0 & \text{if $p_M$ is torsion,} \\
{\rm Max} \{ s \,|\,s, \mpl \in \Z, \; \mpl^2s \text{~divides~} \mpl p_M \} &
\text{otherwise,} \end{array} \right.  \]
and a certain value $r \in \{0,1,2\}$ that depends on the properties of the
automorphisms of $H^4(M)$ preserving $p_M$ and the torsion linking form.
If $H^4(M)$ is torsion-free then $\mdiv(M)$ is simply the greatest integer
dividing $p_M$, and $r = 1$ whenever $H^4(M)$ is 2-torsion-free.
$\mdiv(M)$ is always even by Lemma \ref{lem:p_1/2}.

%
%

The following theorem gives lower bounds on $|\GTD(M)|$.
For $\frac{a}{b}$ a fraction without common factors, denote
$\NumB{\frac{a}{b}} = a$. 

\begin{thm}  \label{thm:spinauto_LB}
If $p_M = 0 \in H^4(M; \Q)$ then $\GTD(M) \equiv \GTH(M) \equiv \Z$.
In general
\[ |\GTD(M)| \geq 24 \cdot \NumB{\frac{2^r\dinfty{M}}{224}} . \]
\end{thm}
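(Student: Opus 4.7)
The plan is to analyze the action of $\pi_0\DiffSpin(M)$ on $\GTH(M)\cong\Z$ and to bound the divisibility of its image. By Theorem \ref{thm:defnu}, the action preserves $\nu$; combined with Proposition \ref{prop:d_nu}, this forces it to act by translations in $24\Z$, and Proposition \ref{prop:Dshift} identifies the translation associated to $[f]$ as $24\ahat(T_f)$. Writing $A:=\ahat(\pi_0\DiffSpin(M))\subseteq\Z$, it follows that $|\GTD(M)|=24\cdot[\Z:A]$ (with $[\Z:0]=\infty$), so the task reduces to bounding the divisibility of $\ahat(T_f)$ as $f$ varies.

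The key simplification exploits that $T_f\to S^1$ is a fiber bundle, so by multiplicativity of the signature, $\sigma(T_f)=0$. The Hirzebruch formula $\sigma=L_2[T_f]$ with $L_2=(7p_2-p_1^2)/45$ then yields $7\,p_2(T_f)[T_f]=p_1(T_f)^2[T_f]$. Substituting into $\ahat=(7p_1^2-4p_2)/5760$ and using $p_1=2p_M$ gives the compact formula
\[
\ahat(T_f)\;=\;\frac{p_M(T_f)^2[T_f]}{224}.
\]

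Next, compute $p_M(T_f)^2[T_f]$ via the Wang sequence for $M\into T_f\to S^1$. In the generic case where $f^*p_M=p_M$ in $H^4(M;\Z)$ (which holds whenever $H^4(M;\Z)$ is 2-torsion-free, giving $r=1$), an integer lift $\tilde p_M\in H^4(T_f;\Z)$ of $p_M$ exists, and one writes $p_M(T_f)=\tilde p_M+\bar e\cup q_M$ with $\bar e\in H^1(T_f;\Z)$ pulled back from $S^1$ and a transgression class $q_M\in H^3(M;\Z)/(1-f^*)$. Using $\bar e^2=0$ and $p_M(M)^2=0\in H^8(M;\Z)=0$,
\[
p_M(T_f)^2[T_f]=2\,\langle p_M\cup q_M,[M]\rangle,\qquad\text{hence}\qquad\ahat(T_f)=\frac{\langle p_M\cup q_M,[M]\rangle}{112}.
\]

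Finally, conclude. If $p_M=0\in H^4(M;\Q)$, then $p_M$ is torsion in $H^4(M;\Z)$, so $\langle p_M\cup q_M,[M]\rangle$ is torsion in $H^7(M;\Z)\cong\Z$ and therefore zero; thus $\ahat(T_f)=0$ for all $f$, giving $A=0$ and $\GTD(M)\equiv\Z$. In general, the very definition of $\dinfty{M}$ yields $\langle p_M\cup q_M,[M]\rangle\in\dinfty{M}\Z$, so $\ahat(T_f)\in\NumB{\frac{2\dinfty{M}}{224}}\Z$---this establishes the bound with $r=1$. Extending to $r\in\{0,1,2\}$ requires the classification in \cite{7class}: when $H^4(M;\Z)$ carries 2-torsion on which $f^*$ acts nontrivially, the hypothesis $f^*p_M=p_M$ can fail (only $f^*p_1=p_1$ remains), and one is forced to work with the integer transgression of $p_1$, losing a factor of $2$ and producing the weaker bound $\NumB{\frac{\dinfty{M}}{224}}$ of the $r=0$ case; conversely, additional constraints coming from $f^*$ preserving the torsion linking form can yield an extra factor of $2$, giving $r=2$. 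The main obstacle is managing these 2-adic refinements, which is precisely what the structural results on the automorphism group of $(H^4(M;\Z),p_M,\text{linking form})$ from \cite{7class} are designed to handle.
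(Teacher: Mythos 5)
Your overall strategy coincides with the paper's (translations of $\GTH(M)\cong\Z$ are $24\ahat(T_f)=\tfrac{3}{28}p^2(f)$, and $\sign(T_f)=0$ reduces everything to the divisibility of $p_{T_f}^2$), but the central computation has a genuine gap. You choose a lift $\tilde p_M\in H^4(T_f;\Z)$ of $p_M$ and set $\tilde p_M^2=0$ ``using $p_M^2=0\in H^8(M)=0$''. This is not a valid deduction: the restriction $H^8(T_f)\to H^8(M)=0$ is identically zero, so knowing that $i^*(\tilde p_M^2)$ vanishes says nothing about $\tilde p_M^2\in H^8(T_f)\cong\Z$, and in general \emph{no} lift of $p_M$ has vanishing square. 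The self-intersection of a chosen lift is exactly the delicate quantity (governed by the torsion linking form) that \cite{7class} encodes in the function $P$ and in Gauss refinements, and it cannot be discarded. Indeed your resulting formula $p^2(f)=2\langle p_M\cup q_M,[M]\rangle$ with $q_M$ integral proves too much: torsion pairs to zero into $H^7(M)\cong\Z$, so it would give $p^2(f)\in 2d_\pi\Z$, whereas the sharp statement \eqref{eq:rdiff}/\eqref{eq:imP} (with equality for $2$-connected $M$) only involves $2^r\mdiv(M)$, and $\mdiv(M)$ can be a proper divisor of $d_\pi(M)$ even when $H^4(M)$ has no $2$-torsion (odd-torsion analogues of the example after Lemma \ref{lem:p_1/2}). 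So the ``$r=1$ bound'' does not follow from your computation; only the rationally-trivial case survives, and even there the correct argument is simply that $p_{T_f}\otimes\Q$ is a Wang transgression, hence has square zero.

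The paper sidesteps the lift problem by working modulo $s$: Lemma \ref{lem:d-p^2} shows that if $s$ divides $i^*x$ then $\rho_s(x)$ lies in the image of the Wang connecting map, on which the cup-square vanishes mod $s$; applied with $x=\mpl\, p_{T_f}$ and $s=\mpl^2\mdiv(M)$ this gives $p^2(f)\in\mdiv(M)\Z$ (note this handles $\mdiv$ rather than $d_\pi$, which is why the $\mpl^2 s\mid \mpl p_M$ formulation appears), and combined with $224\mid p^2(f)$ from $\sign(T_f)=0$ one gets Lemma \ref{lem:D_constraints}, i.e.\ essentially the $r=0$ bound. The refined factor $2^r$, and the exactness needed for the theorem as stated, are then quoted from \cite{7class} via Theorem \ref{thm:rdiff}. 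Your closing description of where $2^r$ comes from is also inaccurate: $f^*p_M=p_M$ holds for \emph{every} spin diffeomorphism (the paper records $(f^*)^\# p_M=p_M$), so the dichotomy ``integral lift of $p_M$ versus only $p_1$'' is not the source of $r=0$; rather, $2^r\mdiv$ is the image of the purely algebraic function $P$ on $\aut(H^4(M),b_M,p_M)$, i.e.\ precisely the linking-form contribution of the lift's self-intersection that your argument set to zero.
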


\noindent
So, in particular, if $H^4(M)$ has no 2-torsion then
$|\GTD(M)| \geq 24 \cdot \NumB{\frac{\dinfty{M}}{112}}$.


For upper bounds on $|\GTD(M)|$ we need spin
diffeomorphisms $f \colon M \cong M$ with $D(\varphi, f^*\varphi) \neq 0$.
When $M$ is $2$-connected and $p_M$ is not torsion, these are provided
by \cite{7class}. 

\begin{thm}
\label{thm:spinauto_UB} 
If $M$ is 2-connected and $p_M \not= 0 \in H^4(M; \Q)$ then
\[ |\GTD(M)| = 24 \cdot \NumB{\frac{2^r \mdiv(M)}{224}} ; \]
then also
$|\GTD(N \sharp M)| \leq 24 \cdot \NumB{ \frac{2^r \mdiv(M) }{224} }$
for any connected spin 7-manifold $N$.
\end{thm}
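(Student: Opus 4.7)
The plan is to identify $|\GTD(M)|$ with the index in $\Z$ of the image of the shift homomorphism $\pi_0\DiffSpin(M) \to \Z$ given by Proposition \ref{prop:Dshift} as $[f] \mapsto D(\varphi, f^*\varphi) = 24\,\ahat(T_f)$; this image is a subgroup, because $D$ defines an action of $\pi_0\DiffSpin(M)$ on the $\Z$-torsor $\GTH(M)$ by translations. Writing the image as $24c\,\Z \subset \Z$, we have $|\GTD(M)| = 24c$ whenever $c \geq 1$. Theorem \ref{thm:spinauto_LB} already supplies the lower bound $c \geq \NumB{\frac{2^r \mdiv(M)}{224}} \geq 1$, so to prove equality it suffices to exhibit enough spin diffeomorphisms whose mapping tori realise the value $\NumB{\frac{2^r \mdiv(M)}{224}}$ as $\ahat(T_f)$.

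This is where the hypotheses that $M$ be $2$-connected and $p_M$ non-torsion enter: the classification of spin diffeomorphisms of such $7$-manifolds developed in \cite{7class} provides the required supply of automorphisms together with the requisite computation. The relevant ingredient is that, for a spin diffeomorphism $f$ of such an $M$, $\ahat(T_f)$ is evaluated via the eight-dimensional formula $\ahat = \frac{1}{5760}(7p_1^2 - 4p_2)$ in terms of the induced action of $f$ on $H^4(M)$ --- which must preserve $p_M$ and the torsion linking form on $H^4(M)_{\mathrm{tors}}$ --- together with the divisibility data of $p_M$ encoded by $\mdiv(M)$ and the parameter $r \in \{0,1,2\}$. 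The denominator $5760 = 24\cdot 240$ meets $\mdiv(M)$ and $2^r$ to produce the fraction $\frac{2^r \mdiv(M)}{224}$; reading off from \cite{7class} that the achievable values generate exactly $\NumB{\frac{2^r \mdiv(M)}{224}} \cdot \Z$, combined with the lower bound, yields the first statement.

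For the connect-sum assertion, take a spin diffeomorphism $f$ of $M$ realising $\NumB{\frac{2^r \mdiv(M)}{224}}$, and after isotopy arrange that $f$ is the identity on a ball $B \subset M$. Perform the connect sum $N \sharp M$ inside $B$ and extend $f$ by the identity on $N$ to a spin diffeomorphism $\tilde f$ of $N \sharp M$. A standard cut-and-paste argument along the $S^1$-family of connect-sum spheres in $T_{\tilde f}$ produces a spin cobordism between $T_{\tilde f}$ and the disjoint union $T_f \sqcup (N \times S^1)$; since $\ahat(N \times S^1) = \ahat(N)\cdot\ahat(S^1) = 0$, we conclude $\ahat(T_{\tilde f}) = \ahat(T_f)$. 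Hence the image of the shift homomorphism for $N \sharp M$ contains that for $M$, and passing to indices in $\Z$ gives the stated upper bound $|\GTD(N \sharp M)| \leq 24 \cdot \NumB{\frac{2^r \mdiv(M)}{224}}$.

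The main obstacle is the middle paragraph: correctly extracting from \cite{7class} the image of $f \mapsto \ahat(T_f)$ as $f$ ranges over $\DiffSpin(M)$. The factor $2^r$ is the most delicate point, because it reflects subtle obstructions to realising automorphisms of $(H^4(M), p_M)$ preserving the torsion linking form by actual spin diffeomorphisms; controlling exactly which such automorphisms lift (and with what effect on the mapping-torus characteristic numbers) is the heart of the argument.
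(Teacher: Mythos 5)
Your proposal is correct and follows essentially the same route as the paper: identify $\GTD(M)$ with the quotient of the $\Z$-torsor $\GTH(M)$ by the translation subgroup $\{D(\varphi,f^*\varphi)\} = \{24\ahat(T_f)\} = \{\tfrac{3}{28}p^2(f)\}$ (Proposition \ref{prop:Dshift}/\ref{prop:Dandp}), combine the containment giving the lower bound with the fact—quoted from \cite{7class}, exactly as the paper does via Theorem \ref{thm:rdiff}—that for 2-connected $M$ the image of $p^2$ on $\DiffSpin(M)$ is precisely $\lcm(224,2^r\mdiv(M))\Z$, and handle $N\sharp M$ by extending a realizing diffeomorphism by the identity over $N$. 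Your cut-and-paste justification that $\ahat(T_{\tilde f})=\ahat(T_f)$ actually supplies detail the paper leaves implicit; the only inaccuracy is the heuristic about the denominator $5760$, since the $224$ really arises from the Eells--Kuiper relation $p_X^2-\sign(X)=8\cdot 28\,\ahat(X)$ applied with $\sign(T_f)=\chi(T_f)=0$, but this does not affect the argument because the needed realization statement is the cited one.
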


Theorem \ref{thm:spinauto_UB} 
helps identify certain manifolds $M$
for which $\nu$ is a complete invariant of $\GTD(M)$.

\begin{cor}
\label{cor:nuauto}
If $2^r\mdiv(M_0)$ divides $224$ for some 2-connected $M_0$ such that
$M \cong N \sharp M_0$, then $|\GTD(M)| = 24$.
In this case two \gtstr s $\varphi$ and $\varphi'$ on $M$ are
deformation-equivalent if and only if $\nu(\varphi) = \nu(\varphi')$.
\end{cor}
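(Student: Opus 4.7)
The plan is to sandwich $|\GTD(M)|$ between a lower bound of $24$ coming from $\nu$ and an upper bound of $24$ coming from Theorem~\ref{thm:spinauto_UB}, and then deduce from $|\GTD(M)|=24$ that $\nu$ separates all deformation classes.

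First I would invoke Theorem~\ref{thm:defnu}: the map $\nu \colon \GTD(M) \to \Z_{48}$ is well-defined and attains exactly the $24$ residues allowed by the parity constraint $\nu(\varphi) \equiv \chi_\Q(M) \pmod{2}$. In particular $|\GTD(M)| \geq 24$.

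Next I would apply Theorem~\ref{thm:spinauto_UB} to the decomposition $M \cong N \sharp M_0$ with $M_0$ 2-connected. The hypothesis $2^r \mdiv(M_0) \mid 224$ is to be read as a genuine positive divisibility, which in particular forces $\mdiv(M_0) > 0$ so that $p_{M_0}$ is non-torsion and Theorem~\ref{thm:spinauto_UB} applies. Writing $224 = k \cdot 2^r \mdiv(M_0)$ for a positive integer $k$, the fraction $\frac{2^r \mdiv(M_0)}{224} = \frac{1}{k}$ is already in lowest terms, so $\NumB{\frac{2^r \mdiv(M_0)}{224}} = 1$. Theorem~\ref{thm:spinauto_UB} then gives $|\GTD(M)| \leq 24$.

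Combining the two bounds yields $|\GTD(M)| = 24$. Since $\nu$ already produces $24$ distinct values on $\GTD(M)$ by the first step, it is forced to be injective, so two \gtstr s $\varphi$ and $\varphi'$ on $M$ satisfy $\nu(\varphi) = \nu(\varphi')$ if and only if they represent the same element of $\GTD(M)$, \emph{i.e.}\ are deformation-equivalent. The whole argument is a bookkeeping assembly of Theorems~\ref{thm:defnu} and~\ref{thm:spinauto_UB}, so there is no substantive obstacle; the only arithmetic point to check is that the divisibility hypothesis collapses the numerator of $\frac{2^r \mdiv(M_0)}{224}$ to $1$.
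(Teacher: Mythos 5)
Your proposal is correct and is exactly the argument the paper intends (the corollary is stated without a separate proof precisely because it follows by combining the 24-value statement in Theorem~\ref{thm:defnu} with the upper bound of Theorem~\ref{thm:spinauto_UB}, the divisibility hypothesis making $\NumB{\frac{2^r\mdiv(M_0)}{224}}=1$). Your remark that the hypothesis forces $\mdiv(M_0)>0$, so that $p_{M_0}$ is non-torsion and the theorem applies, is the right reading and the only point needing care.
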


\subsection{The $\xi$-invariant}
\label{subsec:intro_xi}

We now describe a further invariant that, depending on
the topology of~$M$, can distinguish more classes of $\GTD(M)$.
For the moment we restrict to the special case when $p_M$ is rationally
trivial, and postpone the full definition to \S \ref{subsec:xi}.

In dimension 7, the Eells-Kuiper invariant $\mu$ arises from
considering the following characteristic class formula
\cite[\S 6]{eells62}: if $X$ is a closed spin 8-manifold then
\begin{equation}
\label{eq:ek_ahat}
224\ahat(X) = p_X^2 - \sign(X) .
\end{equation}
If $M$ is closed spin with $p_M$ a torsion
class and $W$ is a spin coboundary, then $p_W \in H^4(W;\Q)$ is in the image of the compactly supported cohomology $H^4_{cpt}(W;\Q)$, and $p_W^2 \in \Q$ is
well-defined. Then \eqref{eq:ek_ahat} implies that the $\ahat$ defect,
\begin{equation}
\label{eq:classic_ek}
\mu(M) := \frac{p_W^2 - \sign(W)}{8} \in \Q/28\Z,
\end{equation}
is independent of the choice of $W$. (This differs from the definition in
\cite{eells62} by a factor of 28. The mod $\Z$ residue of $\mu(M)$
is determined by the almost-smooth structure of $M$ because $p_W$ is a
characteristic element for the intersection form; therefore $\mu(M)$ can
take $28$ different values if the underlying almost-smooth
manifold is fixed.)

If we consider a \gtstr{} $\varphi$ on a spin manifold $M$ such that $p_M$
is torsion, then we can in a sense cancel the ambiguities in the definitions
of the $\ahat$ defects $\nu$ and $\mu$ to obtain a stronger invariant.
A linear combination of \eqref{eq:defect} and \eqref{eq:ek_ahat} gives that
\[ 7\chi(X) + \frac{3p_X^2 - 45 \sign(X)}{2} = 0  \]
for any closed $X^8$ with $Spin(7)$-structure. 
Hence setting
\begin{equation}
\label{eq:simplest_xi}
\xi(\varphi) := 7\chi(W) + \frac{3p_W^2 - 45\sign(W)}{2} \in \Q
\end{equation}
is independent of choice of $Spin(7)$-coboundary $W$. If we consider \gtstr s
on a fixed smooth $M$ with $p_M$ torsion then the relation
\[ \xi(\varphi) = 7\nu(\varphi) + 12 \mu(M) \mmod 336\Z \]
means that $\nu(\varphi)$ can be determined from $\xi(\varphi)$ and $\mu(M)$.
The $\xi$-invariant takes precisely the values allowed by the constraint
\[ \xi(\varphi) = 7 \chi_\Q(M) + 12\mu(M) \mmod 14\Z . \]
Similarly to Proposition \ref{prop:d_nu},
$14D(\varphi, \varphi') = \xi(\varphi') - \xi(\varphi)$, 
so $\xi$ distinguishes between all elements of $\GTH(M)$. Since $\xi$ is patently invariant under
diffeomorphisms, this entails the claim from Theorem
\ref{thm:spinauto_UB} that $\GTH(M) = \GTD(M)$ when $p_M$ is torsion.

\begin{ex}
\label{ex:rdnu}
$S^7$ has a standard \gtstr{} $\varphi_{rd}$, induced as the
boundary of $B^8$ with a flat \spstr. Clearly
$\nu(\varphi_{rd}) \equiv \chi(B^8) -3\sign(B^8) \equiv 1$.
Meanwhile $p_{B^8} = 0$, so $\xi(\varphi_{rd}) = 7$.

On the other hand, the flat \spstr{} on the complement of $B^8 \subset \bbr^8$
induces the \gtstr{} $-\varphi_{rd}$ on $S^7$ (with the orientation reversed).
If $r$ is a reflection of $S^7$ then $\rdbar = r^*(-\varphi_{rd})$ is a
different \gtstr{} on $S^7$ inducing the same orientation as $\varphi_{rd}$.
Since $\nu(\rdbar) = \nu(-\varphi_{rd}) = -\nu(\varphi_{rd}) = -1$
(and $\xi(\rdbar) = \xi(-\varphi_{rd}) = -\xi(\varphi_{rd}) = -7$)
there can be no homotopy between $\varphi_{rd}$ and $\rdbar$. 
\end{ex}

\begin{ex}
\label{ex:sq}
$S^7$ has a `squashed' \gtstr{} $\varphi_{sq}$ that is invariant under
$Sp(2)Sp(1)$ and nearly parallel (\ie the corresponding cone metric on $\bbr \times S^7$ has exceptional holonomy $Spin(7)$).
This \gtstr{} is the asymptotic link of the asymptotically
conical $Spin(7)$-manifold constructed by Bryant and Salamon \cite{bryant89}
on the total space $W$ of the positive spinor bundle of $S^4$.
This bundle is $\co(-1)$ over $\bbh P^1$ with the orientation reversed. Since
this space has $\sigma = 1$ and $\chi = 2$, it follows that
$\nu(\varphi_{sq}) = 2 - 3 = -1$. 

Further $p_W^2 = 1$, so $\xi(\varphi_{sq}) = -7$. In particular,
$\varphi_{sq}$ is homotopic to $\rdbar$;
if we glue $W$ and $B^8$ to form $\bbh P^2$ then we can
interpolate to define a \spstr{} on $\bbh P^2$.
\end{ex}

The definition of $\xi$ becomes more involved when $p_M$ is rationally
non-trivial.
In general, let $d_\pi$ denote the greatest integer dividing
$p_M$ modulo torsion (which is even by Lemma \ref{lem:p_1/2}),
and $\td_\pi := \gcd(d_\pi, 4)$.
One can then replace the $p_W^2 \in \Q$ that appears in
\eqref{eq:simplest_xi} with a $\Q/2\td_\pi\Z$-valued function on
$\divsp := \{k \in H^4(M) : p_M - d_\pi k \textrm{ is torsion}\}$.
Hence one can define $\xi(\varphi)$ as a function ${\divsp \to \Q/3\td_\pi\Z}$,
see Definition \ref{def:xi}.
It is invariant in the sense that if $f : M' \to M$ is a diffeo\-morphism, then
$f^* : H^4(M) \to H^4(M')$ restricts to a bijection $\divsp \to \divsp'$,
and $\xi(f^*\varphi) \circ f^* = \xi(\varphi)$ for any \gtstr{} $\varphi$
on $M$.

\begin{lem}
\begin{equation}
\label{eq:d_xi}
\xi(\varphi') - \xi(\varphi) = 14D(\varphi, \varphi') \mod 3\td_\pi
\end{equation}
\end{lem}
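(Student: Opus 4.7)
The plan is to combine the universal characteristic class relation for closed spin 8-manifolds with Lemma~\ref{lem:d_speuler}. First I eliminate $\ahat$ between \eqref{eq:char} and \eqref{eq:ek_ahat} to obtain, for any closed spin 8-manifold $X$,
\[
14 e_+(X) \;=\; 7\chi(X) + \frac{3 p_X^2 - 45\sign(X)}{2}.
\]
When $X$ carries a $Spin(7)$-structure $e_+(X)=0$; this is precisely the identity that makes the definition of $\xi$ consistent on closed $Spin(7)$-manifolds.

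Next I fix $k \in \divsp$, choose a $Spin(7)$-coboundary $(W_\varphi, \psi_\varphi)$ of $(M, \varphi)$ together with a $Spin(7)$-bordism $(W, \psi)$ from $(M, \varphi)$ to $(M, \varphi')$, and extend $k$ into $W_\varphi$ and $W$ compatibly (existence of such extensions follows from obstruction theory, possibly after stabilising $W$ by a connected sum with copies of $S^4 \times S^4$). The concatenation $W_{\varphi'} := W_\varphi \cup_M W$ is then a $Spin(7)$-coboundary of $(M, \varphi')$ carrying a compatible extension of $k$. Since $\chi(M) = 0$ and by Novikov additivity, each of $\chi$, $\sign$ and the relatively defined $p^2$ is additive under this gluing, and the corresponding difference coincides with the value on the closed manifold $\ow$ obtained by identifying the two boundary components of $W$. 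Subtracting the defining expressions for $\xi(\varphi')(k)$ and $\xi(\varphi)(k)$ and invoking the universal identity therefore yields
\[
\xi(\varphi')(k) - \xi(\varphi)(k) \;\equiv\; 7\chi(\ow) + \frac{3 p_{\ow}^2 - 45\sign(\ow)}{2} \;=\; 14 e_+(\ow) \pmod{3\td_\pi},
\]
and Lemma~\ref{lem:d_speuler} identifies the right-hand side with $14 D(\varphi, \varphi')$, with the sign fixed by the same convention as in Proposition~\ref{prop:d_nu}.

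The hard part will be the bookkeeping that the ambiguities inherent in Definition~\ref{def:xi} assemble correctly modulo $3\td_\pi$: a different choice of extension of $k \in \divsp$ shifts $p^2_W$ by an element of $2\td_\pi\Z$, so the quantity $\tfrac{3}{2}p^2_W$ shifts by an element of $3\td_\pi\Z$, which is exactly the residue in which $\xi$ takes values. This is the relative analogue of the well-definedness check built into Definition~\ref{def:xi}, executed across the bordism $W$ rather than on a single coboundary.
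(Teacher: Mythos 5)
Your opening move is right and is the same identity the paper exploits: eliminating $\ahat$ between \eqref{eq:char} and \eqref{eq:ek_ahat} does give $14e_+(X)=7\chi(X)+\tfrac{1}{2}\bigl(3p_X^2-45\sign(X)\bigr)$ for closed spin $8$-manifolds, and combined with Lemma \ref{lem:d_speuler} this proves the lemma in the special case that $p_M$ is torsion, where $\xi$ is given by \eqref{eq:simplest_xi}. But the lemma is the general statement, and your argument has gaps exactly where the general case differs. Definition \ref{def:xi} does not involve ``$p_W^2$'' (when $p_M$ is non-torsion $p_W$ has no rational compactly supported lift, so $p_W^2$ is not defined); it involves $12g_W$, where $g_W(k)$ is built from the relative square of $p_W-d_\pi n$ with $jn=k$. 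Running your additivity argument with this class produces, on the closed-up bordism $\ow$, the number $(p_{\ow}-d_\pi\bar k)^2$ rather than $p_{\ow}^2$; these differ by $2d_\pi\,p_{\ow}\!\cdot\!\bar k-d_\pi^2\,\bar k^2$, and your key display identifying the difference of the $\xi$'s with $14e_+(\ow)$ needs the (true but unproved) fact that this discrepancy is divisible by $2\td_\pi$ --- an argument using that $p_{\ow}$ is characteristic (Wu formula) and that $d_\pi$ is even. Your final paragraph only checks independence of the choice of extension, which is the well-definedness of $g_W$ imported from \cite{7class}, not this identification. Two further problems: interior connected sums with $S^4\times S^4$ do not change the image of $H^4(W)\to H^4(M)$, so they cannot produce the compatible extension of $k$ you need (one would instead surger the coboundary to be $3$-connected, as in \cite{7class}); and with the paper's conventions a $Spin(7)$-bordism from $(M,\varphi)$ to $(M,\varphi')$ induces $\varphi$ and $-\varphi'$ on its ends, so $W_\varphi\cup_M W$ as written is not a $Spin(7)$-coboundary of $\varphi'$ --- the bordism must run from $(M,\varphi')$ to $(M,\varphi)$, and that is also what makes the sign in Lemma \ref{lem:d_speuler} come out as $+14D(\varphi,\varphi')$ rather than being ``fixed by convention''.

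The route the paper sets up avoids all of these issues. Take $Spin(7)$-coboundaries $W$, $W'$ of $\varphi$, $\varphi'$ (so $n_+=0$ for both). Lemma \ref{lem:defnu}\ref{it:affine} and \ref{it:welldef} give
\begin{equation*}
\bigl(\chi(W')-3\sign(W')\bigr)-\bigl(\chi(W)-3\sign(W)\bigr)=2D(\varphi,\varphi')+48\ahat(X),\qquad X=W\cup_{\Id_M}(-W'),
\end{equation*}
while \eqref{eq:gauss} with $f=\Id$ gives $12\bigl(g_{W'}-g_W\bigr)=12\cdot 28\,\ahat(-X)=-336\,\ahat(X) \bmod 3\td_\pi$. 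Since $7\cdot 48=12\cdot 28=336$, the $\ahat$ terms cancel and \eqref{eq:d_xi} follows directly, with no extension of $k$ over a bordism, no gluing, and no comparison of $(p-d_\pi\bar k)^2$ with $p^2$ required: the cancellation $336-336=0$ is precisely the design of the coefficients $7$ and $12$ in Definition \ref{def:xi}. If you want to keep your bordism-concatenation picture, you must add the Wu-formula congruence and replace the $S^4\times S^4$ stabilisation by an honest surgery argument; otherwise the short argument above is the one to give.
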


Together with Proposition \ref{prop:d_nu}, this means that the values
of $(\nu,\xi)$ determine $D(\varphi, \varphi')$ modulo
$\lcm(24, \Num\big(\frac{3\td_\pi}{14}\big)) = 24\NumB{\frac{d_\pi}{112}}$.
However, this does not mean that the pair $(\nu,\xi)$ distinguishes between
$24\NumB{\frac{d_\pi}{112}}$ classes in $\GTD(M)$, but only that it
distinguishes that many classes modulo homotopies and diffeomorphisms
\emph{acting trivially on cohomology}.
The reason is that for a general diffeomorphism $f$ of $M$,
$\xi(\varphi) \circ f^* - \xi(\varphi)$ can be a non-zero
constant in $\Q/3\td_\pi\Z$. Understanding the action of $f$ on $\xi$ reduces
to the same technical problem as for the action on $\GTH(M)$, and we find that
in general $(\nu,\xi)$ can distinguish between 
$24\NumB{\frac{2^r\dinfty{M}}{224}}$ elements of $\GTD(M)$,
which in a sense is a more precise version of Theorem \ref{thm:spinauto_LB}.
In particular, combining with Theorem \ref{thm:spinauto_UB} we find

\begin{thm}
\label{thm:nuxi}
If $M$ is 2-connected then $(\nu,\xi)$ is a complete invariant of $\GTD(M)$.
\end{thm}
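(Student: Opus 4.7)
The plan is to match the number of values taken by $(\nu, \xi)$ on $\GTD(M)$ with the cardinality $|\GTD(M)|$, using the analysis already assembled in Sections \ref{subsec:Intro_Diff} and \ref{subsec:intro_xi}. Having upper and lower bounds that agree will force the map $\GTD(M) \to$ (values of $(\nu,\xi)$) to be injective, which is what it means for $(\nu,\xi)$ to be complete.

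First I would dispatch the case $p_M = 0 \in H^4(M;\Q)$. Here Theorem \ref{thm:spinauto_LB} gives $\GTD(M) \equiv \GTH(M) \equiv \Z$, and in this setting $\xi$ is $\Q$-valued with $\xi(\varphi') - \xi(\varphi) = 14D(\varphi,\varphi')$, so $\xi$ alone separates every element of $\GTH(M)$ and we are done.

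For the main case $p_M \neq 0 \in H^4(M;\Q)$, Theorem \ref{thm:spinauto_UB} supplies the upper bound $|\GTD(M)| = 24\Num\left(\frac{2^r\mdiv(M)}{224}\right)$. For the complementary lower bound, Proposition \ref{prop:d_nu} combined with \eqref{eq:d_xi} implies that $(\nu,\xi)$ determines $D(\varphi,\varphi')$ on $\GTH(M) \cong \Z$ modulo $\lcm\bigl(24,\Num(3\td_\pi/14)\bigr) = 24\Num(d_\pi/112)$, giving the count modulo diffeomorphisms acting trivially on cohomology. To descend to the full quotient $\GTD(M)$, I would study how $\pi_0\DiffSpin(M)$ acts on $(\nu,\xi)$: it fixes $\nu$, and on $\xi$ it acts by the pullback on the domain $\divsp$ together with a constant shift in $\Q/3\td_\pi\Z$. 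Invoking the description of $\pi_0\DiffSpin(M)$ from \cite{7class} that already drives Theorem \ref{thm:spinauto_UB}, the subgroup of such shifts reduces the count to exactly $24\Num\left(\frac{2^r\mdiv(M)}{224}\right)$, matching the upper bound.

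The main obstacle is pinning down the subgroup of $\Q/3\td_\pi\Z$ comprising the constant shifts $\xi(\varphi)\circ f^* - \xi(\varphi)$ realized by $f \in \pi_0\DiffSpin(M)$, and checking that it has exactly the index needed to convert $24\Num(d_\pi/112)$ into $24\Num(2^r\mdiv(M)/224)$. This requires tracing how a cohomology automorphism acts on the quadratic term $p_W^2$ underlying the definition of $\xi$, and it is the same linear-algebraic problem as classifying the mapping class group action on $\GTH(M)$ in \cite{7class}. Once that identification is in place, the two counts coincide and the theorem follows.
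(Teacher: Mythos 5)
Your proposal follows essentially the same route as the paper's own proof: split off the case of rationally trivial $p_M$ via Theorem \ref{thm:spinauto_LB}, use Proposition \ref{prop:d_nu} and \eqref{eq:d_xi} to see that $(\nu,\xi)$ determines $D$ modulo $24\NumB{\frac{d_\pi}{112}}$, and then control the constant shift $\xi(f^*\varphi)-\xi(\varphi)$ by the invariant $p^2(f)$ from \cite{7class} (the paper makes this explicit as $\frac{3}{2}p^2(f) \bmod 3\td_\pi$ via the Gauss-refinement formula \eqref{eq:gaussaction} and Theorem \ref{thm:rdiff}), so that the count of distinguished classes matches $|\GTD(M)| = 24\NumB{\frac{2^r\mdiv(M)}{224}}$ from Theorem \ref{thm:spinauto_UB}. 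The step you flag as the "main obstacle" is exactly what the paper settles by quoting \cite{7class}, so your outline is correct and coincides with the paper's argument.
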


In combination with the diffeomorphism classification of closed 2-connected
7-manifolds from \cite{7class}, we obtain a classification result for
2-connected 7-manifolds with \gtstr s, stated in Theorem \ref{thm:classify_g2}.

\subsection{Further problems}
\label{subsec:further}

The main motivation for this work is to help distinguish between connected
components of the moduli space of \gtmetric s on a fixed $M$. One supply of
candidates comes from 2-connected twisted connected sums, but Theorem
\ref{thm:nu24} shows that $\nu$ is not enough to distinguish between those.
All twisted connected sum \gtmfd s $M$ have $\mdiv(M)$ a divisor of
$\mdiv(K3) = 24$, so when $M$ is 2-connected, the only remaining chance of
using the homotopy theory to distinguish between different twisted connected
sums \gtmetric s is when $\mdiv$ is divisible by 3:
by Theorem \ref{thm:spinauto_LB} there are in this case 3 different homotopy
classes of \gtstr s with $\nu = 24$, and they can be distinguished by $\xi$.
A number of examples with $\mdiv(M) = d_\pi(M) = 6$ are exhibited
in \cite{g2m}, and it seems likely that a more exhaustive search will provide
diffeomorphic pairs of such twisted connected sums, but we do not currently
have any way to compute $\xi$ in this situation. 

The examples of Joyce with odd $\nu$ mentioned above can be viewed as a kind of
twisted connected sums, gluing asymptotically cylindrical manifolds with
holonomy a proper subgroup of $G_2$ and cross-section $K3 \times T^2$, but
where the torus factor is not rectangular (as for usual twisted connected sums)
but hexagonal. Such ``extra-twisted connected sums'' provide candidates of
\mbox{2-connected} \gtmfd s with fewer restrictions on the possible values of
$\nu$, and we will return to this elsewhere.

The definition of $\nu$ in terms of a coboundary is not always amenable to 
explicit computations. 
A common theme in differential topology is to find ways to express
`extrinsic' invariants (defined in terms of a coboundary) intrinsically,
\eg the classical Eells-Kuiper invariant can be expressed in terms of
eta invariants \cite{donnelly75}.
Sebastian Goette informs us that it is possible to express $\nu$ analytically,
and we plan to study this and applications to extra-twisted connected sums
further in future work.

Some necessary conditions are known for a closed spin 7-manifold $M$ to admit
a metric with holonomy $G_2$ (see \eg \cite[\S10.2]{joyce00}), but there is
currently no conjecture as to what the right sufficient conditions would be.
A refinement of this already very hard problem would be to ask: which
deformation classes of \gtstr s on $M$ contain torsion-free \gtstr s?
This is of course related to the problem of whether there is any $M$
with torsion-free \gtstr s that are not deformation-equivalent, which was one
of our motivations for introducing $\nu$.
If one attempts to find torsion-free \gtstr s as limits of a flow of \gtstr s
as in \cite{bryant11, grigorian13, weiss12, xu09}, does the homotopy class of
the initial \gtstr s affect the long-term behaviour of the flow?

\subsection*{Organisation}
The rest of the paper is organised as follows.  In Section
\ref{sec:preliminaries} we establish preliminary results needed to define and
compute $\nu$.
In Section \ref{sec:invariant} we define the affine difference $D(\varphi,
\varphi')$ and the $\nu$-invariant, establish the existence of
$Spin(7)$-coboundaries for \gtstr s and hence prove Theorem \ref{thm:defnu}.
We also describe examples of \gtstr s on $S^7$ in more detail.
In Section \ref{sec:torsionfreeG2} we compute the $\nu$-invariant for
twisted connected sum \gtmfd s, proving Theorem \ref{thm:nu24}.
Section \ref{sec:hp} establishes the h-principle for coclosed \gtstr s stated
in Theorem \ref{thm:hp}.
In Section~\ref{sec:diffeomorphisms} we describe the action of spin
diffeomorphisms on $\GTH(M)$, give the general definition of the
\mbox{$\xi$-invariant} and prove
the results from~\S \ref{subsec:Intro_Diff}-\ref{subsec:intro_xi}.

\subsection*{Acknowledgements}
JN thanks the Hausdorff Institute for Mathematics for support and excellent
working conditions during a visit in autumn 2011, from which this project
originates. JN acknowledges post-doctoral support from ERC Grant 247331.
DC thanks the Mathematics Department at Imperial College for its hospitality 
and acknowledges support from EPSRC Mathematics Platform grant EP/I019111/1.
DC also acknowledges the support of the Leibniz Prize of Wolfgang L\"{u}ck, granted by the Deutsche Forschungsgemeinschaft.

\section{Preliminaries} \label{sec:preliminaries}
In this section we describe \gtstr s and $Spin(7)$-structures on $7$ and
$8$-manifolds, and their relationships to spinors.
We also establish some basic facts about the characteristic classes
of spin manifolds in dimensions $7$ and $8$.

\subsection{The Lie groups $Spin(7)$ and $G_2$} \label{subsec:strs}
We give a brief review of how $Spin(7)$ and \gtstr s can be
characterised in terms of forms. For more detail on the differential geometry
of such structures, and how they can be used in the study metrics with
exceptional holonomy, see \eg Salamon \cite{salamon89} or Joyce \cite{joyce00}.
We defer the analogous discussion of $SU(3)$ and $SU(2)$-structures until
we use it in \S \ref{sec:torsionfreeG2}.

The stabiliser in $\gl8$ of the $4$-form
\begin{equation}
\label{sp7formeq}
\begin{aligned}
\psi_{0} = \; & dx^{1234} + dx^{1256} + dx^{1278} + dx^{1357} - dx^{1368}
-dx^{1458} - dx^{1467} - \\ &  dx^{2358} - dx^{2367} - dx^{2457} + dx^{2468}
+ dx^{3456} + dx^{3478} + dx^{5678} \; \in \Lambda^{4}(\bbr^{8})^{*}
\end{aligned}
\end{equation}
is $Spin(7)$ (identified with a subgroup of $SO(8)$ by the spin
representation). Here and elsewhere, $dx^{1234}$ abbreviates
$dx^1 \wedge dx^2 \wedge dx^3 \wedge dx^4$ etc.
On an 8-dimensional manifold $X$, 
a $4$-form $\psi \in \Omega^4(X)$ which is 
pointwise equivalent to $\psi_0$ defines a $Spin(7)$-structure, and induces
a metric and orientation (the orientation form is $\psi^2$).

The exceptional Lie group $G_{2}$ can be defined as the automorphism group
of $\bbo$, the normed division algebra of octonions. Equivalently, $G_{2}$ is
the stabiliser in $\gl7$ of the $3$-form
\begin{equation}
\label{g2formeq}
\varphi_{0} = dx^{123} + dx^{145} + dx^{167} + dx^{246}
 - dx^{257} - dx^{347} - dx^{356} \in \Lambda^{3}(\bbr^{7})^{*} .
\end{equation}
On a 7-dimensional manifold $M$, a $3$-form $\varphi \in \Omega^3(M)$ that is
pointwise equivalent to $\varphi_0$ defines a \gtstr, which induces a
Riemannian metric and orientation. Note that
\begin{equation}
\label{eq:product_sp}
dt \wedge \varphi_0 + *\varphi_0 \cong \psi_0
\end{equation}
on $\bbr \oplus \bbr^7$, so the stabiliser in $Spin(7)$ of a non-zero vector
in $\bbr^8$ is exactly $G_2$. Therefore the product of a 7-manifold with a
\gtstr{} and $S^1$ or $\bbr$ has a natural product $Spin(7)$-structure,
while a $Spin(7)$-structure $\psi$ on $W^8$ induces a \gtstr{} on $\del W$
by contracting $\psi$ with an outward pointing normal vector field.

\begin{rmk}
If $\varphi$ is \gtstr{} on $M^7$, then $-\varphi$ is a \gtstr{} too, inducing
the same metric and opposite orientation (because $\varphi_0$ is equivalent to
$-\varphi_0$ under the orientation-reversing iso\-morphism $-1 \in O(7)$).
The product $Spin(7)$-structure $dt \wedge \varphi + *\varphi$ on
$M \times [0,1]$ induces $\varphi$ on the boundary component
$M \times \{1\} \cong M$, and $-\varphi$ on $M \times \{0\} \cong -M$.
\end{rmk}

\subsection{$G_2$-structures and spinors} \label{subsec:G2spinors}

In this paper we are concerned with \gtstr s on a manifold $M^7$ up to
homotopy. Since there is an obvious way to reverse the orientation of a \gtstr,
while any two Riemannian metrics are homotopic, we may as well consider
\gtstr s compatible with a fixed orientation and metric.
Because $G_2$ is simply-connected,
the inclusion $G_2 \into SO(7)$ lifts to $G_2 \into Spin(7)$.
Therefore a \gtstr{} on $M$ also induces a spin structure, and we focus on
studying \gtstr s compatible also with a fixed spin structure. As in
the introduction, we let $\GTH(M)$ denote the homotopy classes of \gtstr s
on $M$ with a choice of spin structure.

As we already saw, $G_2$ is exactly the stabiliser of a non-zero vector in the
spin representation $\spinr$ of $Spin(7)$; as a representation of $G_2$, $\spinr$ splits
as the sum of a 1-dimensional trivial part and the standard 7-dimensional
representation. $Spin(7)$ acts transitively on the unit sphere in $\spinr$ with
stabiliser $G_2$, so $Spin(7)/G_2 \cong S^7$.

From the above, we deduce that given a spin structure on $M$, a compatible
\gtstr{} $\varphi$ induces an isomorphism $\spinb M \cong \trivr \oplus TM$ for
the spinor bundle $\spinb M$:  here $\trivr$ denotes the trivial line bundle.
Hence we can associate to $\varphi$ a unit section of $\spinb M$, well-defined
up to sign.
Conversely, any unit section of $\spinb M$ defines a compatible \gtstr.
A transverse section $\spins$ of the spinor bundle $\spinb M$ of a spin 7-manifold
has no zeros, so defines a \gtstr; thus a 7-manifold admits \gtstr s if and
only if it is spin (\cf Gray \cite{gray69}, Lawson--Michelsohn
\cite[Theorem IV.10.6]{lawson89}).

Note that $\spins$ and $-\spins$ are always homotopic, because they correspond to
sections of the trivial part in a splitting
$\spinb M \cong \trivr \oplus TM$ and the Euler class of an oriented
7-manifold vanishes.   It follows that
$\spinb M$ contains a trivial $2$-plane field $K \supset \trivr$
which accommodates a homotopy from $\spins$ to $-\spins$.
Therefore $\GTH(M)$ can be identified with homotopy
classes of unit sections of the spinor bundle. As stated in the introduction,
Lemma \ref{lem:GTH=Z} now
follows by a standard application of obstruction theory, but we will describe
the bijection $\GTH(M) \cong \bbz$ in elementary terms in \S \ref{subsec:D}.

\begin{rmk}
Let us make some further comments on the signs of the spinors.
Given a principal $Spin(7)$ lift $\tf$ of the frame bundle $F$ of $M$,
the principal $G_2$-subbundles of $\tf$ are in bijective 
correspondence with sections of the associated unit spinor bundle. The
$G_2$-subbundles corresponding to spinors $\spins$ and $-\spins$ have the same
image in $F$, hence
they define the same \gtstr{} on $M$ (they have the same 3-form $\varphi$).

While $SO(7)$ does not itself act on $\spinr$, the action of $Spin(7)$ on
$(\spinr-\{0\} ) /\bbr^* \cong \bbr P^7$ does descend to an action of $SO(7)$. Therefore the
orbit $SO(7)\varphi_0$, the set of \gtstr s on $\bbr^7$ defining the same
orientation and metric as $\varphi_0$, is $SO(7)/G_2 \cong \bbr P^7$.
\gtstr s compatible with a fixed orientation and metric on $M$ but without any
constraint on the spin structure therefore correspond to sections of
an $\bbr P^7$ bundle. If $M$ is not spin then this bundle has no sections.
Given a spin structure, the unit sphere bundle in the associated spinor bundle
is an $S^7$ lift of the $\bbr P^7$-bundle, and two \gtstr s induce the same spin
structure if they can both be lifted to the same $S^7$ bundle.
\end{rmk}

\subsection{$Spin(7)$-structures and characteristic classes of
$Spin(8)$-bundles}
\label{subsec:spin78}

The spin representation of $Spin(7)$ is faithful, so defines an
inclusion homo\-morphism $Spin(7) \into SO(8)$, which has
a lift $i_\spinr : Spin(7) \into Spin(8)$.
The restriction of the positive half-spin representation $\spinrp$ of
$Spin(8)$ to $Spin(7)$ is a sum of a trivial rank 1 part and the 7-dimensional
vector representation (factoring through $Spin(7) \to SO(7)$). Therefore
$i_\spinr(Spin(7)) \subset Spin(8)$ can be characterised as the stabiliser of a
unit positive spinor $s_0 \in \spinrp$, and $Spin(7)$-structures on a spin
8-manifold are equivalent
to unit positive spinor fields (up to sign, in the same sense as \gtstr s).
Hence there is an obvious obstruction to the existence of $Spin(7)$-structures
on an 8-manifold $X$: it must be spin, and the Euler class in $H^8(X)$ of the
positive half-spinor bundle on $X$ must vanish.

\begin{rmk}
\label{rmk:restrictions}
One can of course also define an embedding $i_0 : Spin(7) \into Spin(8)$ as
the stabiliser of the coordinate vector $e_8$ in the vector representation
$\bbr^8$ of $Spin(8)$. 
The restrictions to this copy of $Spin(7)$ of the half-spin representation
$\spinrpm$ of $Spin(8)$ are both isomorphic to the spin representation of
$Spin(7)$. Therefore, if $W^8$ is a spin manifold then the restrictions
of the half-spinor bundles $\spinbpm W$ to $\partial W$ are naturally
isomorphic to the spinor bundle $\spinb(\partial W)$.

In particular, a positive spinor field on $W^8$ can be restricted to a spinor
field on $\partial W$, so the restriction of a
$Spin(7)$-structure on $W$ to a \gtstr{} on $\partial W$ can be described
in terms of the spinorial picture. Of course, this gives exactly the same
result as if we describe the restriction in terms of differential forms.
This is because the image of the composition of the inclusions
$G_2 \into Spin(7) \stackrel{i_0}{\into} Spin(8)$ is equally well described
as the stabiliser in $Spin(8)$ of $(s_0, e^8) \in \spinrp \times \bbr^8$ and
as the lift of the stabiliser in $GL(\bbr, 8)$ of
$(\psi_0, e_8) \in \Lambda^4 \bbr^8 \times \bbr^8$.
\end{rmk}


Let us describe briefly our conventions for orientations on the half-spin
representations of $Spin(8)$. For each fixed non-zero $v \in \bbr^8$, the
Clifford multiplication $\bbr^8 \times \spinrpm \to \spinrmp$ defines
orientation-preserving isomorphisms $c_v^\pm : \spinrpm \to \spinrmp$.
A feature of the `triality' in dimension 8 is that the map
$\rcliff_{\spins_\pm} : \bbr^8 \to \spinrmp$ induced by
Clifford multiplication with a fixed non-zero spinor $\spins_\pm \in \spinrpm$
is an isomorphism too. The Clifford relations imply that,
for $\spins_+ = v\spins_-$,
\[ c_v^+ \circ \rcliff_{\spins_-} = \; \rcliff_{\spins_+} \circ r_v \; :
\; \bbr^8 \to \spinrn, \]
where $r_v: \bbr^8 \to \bbr^8$ is reflection in the 
hyperplane orthogonal to $v$.
Thus $\rcliff_{\spins_+}$ and $\rcliff_{\spins_-}$  have opposite
orientability. Our convention is that $\rcliff_{\spins_-}$ is
orientation-preserving, while $\rcliff_{\spins_+}$ is not.

More explicitly, $\bbr^8$, $\spinrp$ and $\spinrn$ can each be identified
with the octonions $\bbo$ so 
that the Clifford multiplication $\bbr^8 \times \spinrn \to \spinrp$
corresponds to the octonionic multiplication $(x,y) \mapsto xy$, \cf Baez
\cite[p.162 above (5)]{baez02}. Then, to
satisfy the Clifford relations, $\bbr^8 \times \spinrp \to \spinrn$ must
correspond to $(x,y) \mapsto -\bar x y$, where $\bar x$ is the octonion
conjugate of $x$. This map is orientation-reversing on the first factor.


Let $X$ be a spin 8-manifold, $e \in H^8(X)$ the Euler class of $TX$, and
$e_\pm \in H^8(X)$ the Euler classes of the half-spinor bundles $\spinbpm X$.
More generally, for any principal $Spin(8)$-bundle on any~$X$, let $e, e_\pm$
denote the Euler classes of the vector bundles associated to the vector
and half-spin representations of $Spin(8)$.
With our orientation conventions, the non-degeneracy of the Clifford
product implies
\begin{equation}
\label{eq:cliff}
e_+ = e + e_- .
\end{equation} 

The following statement can be found for instance in
Gray--Green \cite[p.89]{gray70}. 
\begin{prop}\label{prop:e+}
For any principal $Spin(8)$-bundle
\[ e_\pm = \frac{1}{16}\left(p_1^2 - 4p_2 \pm 8e \right). \]
\end{prop}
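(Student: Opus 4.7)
The plan is to verify the identity via the splitting principle, reducing the statement to a polynomial identity in the Chern roots of a maximal torus. Pulling back the universal bundle from $BSpin(8)$ to $BT^4$ for $T^4 \subset Spin(8)$ a maximal torus, the vector representation acquires weights $\pm x_1, \pm x_2, \pm x_3, \pm x_4$ for generators $x_i \in H^2(BT^4;\bbz)$, so
\[ p_1 = \sum_i x_i^2, \qquad p_2 = \sum_{i<j} x_i^2 x_j^2, \qquad e = x_1 x_2 x_3 x_4. \]
The $8$ weights of $\spinrpm$ are $\tfrac12(\epsilon_1 x_1 + \epsilon_2 x_2 + \epsilon_3 x_3 + \epsilon_4 x_4)$ over the sign vectors $\epsilon \in \{\pm 1\}^4$ with $\prod_i \epsilon_i = \pm 1$, respectively.

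The next step is to exploit the fact that negating all four $\epsilon_i$ preserves their product, so the weights of each $\spinrpm$ pair up as $\pm w_j^\pm$ for $j = 1, \dots, 4$. This reflects that the real rank-$8$ half-spin bundle complexifies to a sum of four conjugate pairs of line bundles, and after choosing one representative from each pair (say those weights with $\epsilon_1 = +1$), the Euler class is realised as the product
\[ 16\, e_\pm = \prod_{\epsilon_2\epsilon_3\epsilon_4 = \pm 1} (x_1 + \epsilon_2 x_2 + \epsilon_3 x_3 + \epsilon_4 x_4). \]

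Finally, $16 e_+$ decomposes by pairing via $(a+b)(a-b) = a^2 - b^2$ as
\[ \bigl[(x_1+x_4)^2 - (x_2+x_3)^2\bigr] \bigl[(x_1-x_4)^2 - (x_2-x_3)^2\bigr], \]
which expands to $\sum_i x_i^4 - 2\sum_{i<j} x_i^2 x_j^2 + 8 x_1 x_2 x_3 x_4 = p_1^2 - 4p_2 + 8e$. The case of $16 e_-$ is identical up to the sign of the last term; alternatively, one could deduce the expression for $e_+$ from that for $e_-$ using the already-established relation $e_+ = e + e_-$ from \eqref{eq:cliff}. The main delicate point is matching the conventional orientation on $\spinrpm$ to the chosen representative weights, so that the $+8e$ (rather than $-8e$) summand is assigned to $e_+$: this is dictated by the Clifford-multiplication conventions fixed just before the proposition.
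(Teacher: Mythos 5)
Your argument is correct, but it takes a different route from the paper, which does not prove Proposition \ref{prop:e+} directly: it cites Gray--Green for the formula, and in the remark following Corollary \ref{cor:ahat} sketches an alternative verification, namely that modulo torsion the degree-$8$ characteristic classes of a principal $Spin(8)$-bundle are generated by $p_1^2$, $p_2$ and $e$, so it suffices to evaluate both sides on $S^8$, $K3 \times K3$ and $\bbh P^2$. Your splitting-principle computation with the weights $\tfrac12(\pm x_1 \pm x_2 \pm x_3 \pm x_4)$ is self-contained and universal: the pairing $(A\cdot D)(B\cdot C)=\bigl[(x_1+x_4)^2-(x_2+x_3)^2\bigr]\bigl[(x_1-x_4)^2-(x_2-x_3)^2\bigr]$ does expand to $\sum_i x_i^4 - 2\sum_{i<j}x_i^2x_j^2 + 8x_1x_2x_3x_4 = p_1^2-4p_2+8e$, and the $e_-$ case follows by $x_4 \mapsto -x_4$. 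What the paper's test-manifold route buys is that the sign and labelling conventions are fixed automatically (e.g.\ $e_\pm(S^8)=\pm1$ with $p_1=p_2=0$, $\chi=2$ immediately pins the $\pm 8e$ assignment), whereas your route proves the identity once and for all in $H^*(BSpin(8))$ modulo torsion without needing the generation statement or the examples; both arguments only give the formula up to torsion, which is all the paper uses, since it is applied via integration over closed $8$-manifolds.

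Two small points to tighten. First, the orientation issue you flag at the end deserves one more line: as stated, deducing the $e_+$ formula from the $e_-$ formula via \eqref{eq:cliff} presupposes that the sign of $e_-$ is already fixed. The clean statement is that the weight computation determines the unordered pair $\{16e_+,16e_-\}$ only up to independent signs and up to which half-spin representation carries which label; but since $p_1^2-4p_2$ and $e$ are linearly independent rationally, the relation $e_+-e_-=e$ from \eqref{eq:cliff} (which encodes the paper's orientation conventions) is consistent with exactly one choice, namely $16e_\pm = p_1^2-4p_2\pm 8e$. Alternatively one evaluates on $S^8$ as above. Second, if one wants the identity integrally rather than modulo torsion one would need injectivity of $H^8(BSpin(8);\bbz)/\mathrm{torsion}$ into $H^8(BT^4;\bbz)$, which your argument provides rationally; for the purposes of the paper this distinction is immaterial.
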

In degree 8, the $\ahat$ and $L$ genera are given by
\begin{equation}
\label{eq:genera}
\begin{aligned}
45 \cdot 2^7 \ahat &= 7 p_1^2 - 4p_2, \\
45 L &= 7p_2 - p_1^2,
\end{aligned}
\end{equation}
so Proposition \ref{prop:e+} can be rewritten as
$e_\pm = 24 \ahat + {\displaystyle\frac{\pm e - 3L}{2}}$.
If $X$ is closed and orientable then the integral of the $L$ genus of $TX$ is
the signature of $X$ by the Hirzebruch signature theorem, while the integral of
the Euler class is just the ordinary Euler characteristic.

\begin{cor}
\label{cor:ahat}
If $X$ is a closed spin 8-manifold then
\[ e_\pm(X) = 24 \ahat(X) + \frac{\pm\chi(X) - 3\sign(X)}{2} . \]
\end{cor}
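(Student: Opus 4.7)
The plan is to treat this as an immediate algebraic consequence of Proposition \ref{prop:e+} combined with the Hirzebruch formulas \eqref{eq:genera}, followed by evaluation on the fundamental class.

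First I would eliminate $p_1^2$ and $p_2$ from Proposition \ref{prop:e+} in favour of $\ahat$ and $L$. The two relations in \eqref{eq:genera},
\[ 45 \cdot 128\,\ahat = 7p_1^2 - 4p_2, \qquad 45 L = 7p_2 - p_1^2, \]
are linear in the monomials $p_1^2$ and $p_2$ and have non-degenerate coefficient matrix, so they can be inverted to express $p_1^2$ and $p_2$ in terms of $\ahat$ and $L$. A short calculation (multiply the second relation by $7$ and add to the first) gives $p_2 = 128\ahat + 7L$ and $p_1^2 = 896\ahat + 4L$, and hence
\[ p_1^2 - 4p_2 = 384\,\ahat - 24\,L . \]
Substituting into Proposition \ref{prop:e+} yields the intermediate identity
\[ e_\pm = 24\,\ahat + \frac{\pm e - 3L}{2}, \]
which the paper mentions just before stating the corollary.

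Finally, since $X$ is closed, oriented and 8-dimensional, the Hirzebruch signature theorem identifies $\int_X L$ with $\sign(X)$, while the Chern--Gauss--Bonnet theorem identifies $\int_X e$ with $\chi(X)$; by definition $\int_X \ahat = \ahat(X)$. Evaluating the previous identity on the fundamental class $[X]$ therefore gives
\[ e_\pm(X) = 24\,\ahat(X) + \frac{\pm \chi(X) - 3\sign(X)}{2}, \]
as required. The argument is essentially bookkeeping, and the only real thing to check is that the signs and numerical factors in \eqref{eq:genera} and Proposition \ref{prop:e+} are consistent with the orientation conventions for the half-spin bundles fixed earlier in \S\ref{subsec:spin78} (so that $\pm$ really matches $e_\pm$ rather than $e_\mp$); this is the only point at which one could slip up, and it is resolved by the discussion of the Clifford multiplication $\rcliff_{\spins_\pm}$ already given.
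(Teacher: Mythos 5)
Your proposal is correct and follows exactly the route the paper takes: it rewrites Proposition \ref{prop:e+} via \eqref{eq:genera} as $e_\pm = 24\ahat + \frac{\pm e - 3L}{2}$ (your elimination $p_1^2 - 4p_2 = 384\ahat - 24L$ checks out) and then integrates, using the Hirzebruch signature theorem for $L$ and the Euler class for $\chi$. Nothing further is needed.
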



\begin{rmk}
Modulo torsion, the group of 
integral characteristic classes of a principal $Spin(8)$-bundle in dimension
$8$ is generated by $p_1^2$, $p_2$ and $e$, so we could prove
Corollary \ref{cor:ahat} (and hence Proposition \ref{prop:e+}) by checking
that the formula holds for the following spin $8$-manifolds.
\begin{itemize}
\item $S^8$: $\chi = 2$, $\ahat = \sign = 0$, $e_\pm = \pm 1$.
\item $K3 \times K3$: $\chi = 24^2$, $\sign = (-16)^2$. $\ahat = 4$ because
the holonomy is $SU(2) \times SU(2)$. Because this also defines a \spstr{}
(\cf \eqref{eq:su2^2}), $e_+ = 0$ and $e_- = -\chi$.
\item $\bbh P^2$: $\chi = 3$, $\sign = 1$. $\ahat = 0$ by the Lichnerowicz
formula since there is a metric with positive scalar curvature.
$e_- = -\chi$ because $\spinbn X \cong -TX$ for any spin 8-manifold $X$ with
$Sp(2)Sp(1)$-structure. This structure also splits $\spinbp X$ into a sum of a
rank 5 and a rank 3 part,
so $e_+ = 0$. (Alternatively, we can identify a quaternionic line subbundle of
$T\bbh P^2$, like that spanned by the projection of the vector field
$(q_1, q_2, q_3) \mapsto (0, q_1, q_2)$ on $\bbh^3$, with a non-vanishing
section of the rank 5 part of $\spinbp X$.)
\end{itemize}
\end{rmk}

\section{The $\nu$-invariant} \label{sec:invariant}

In this section we study the set $\GTH(M)$ of homotopy classes of \gtstr s
on a closed spin 7-manifold $M$, and prove the basic properties of the
invariants $D$ and $\nu$. We conclude the section with some concrete examples.

\subsection{The affine difference} \label{subsec:D}

Let $M$ be a closed connected spin 7-manifold, and $\varphi, \varphi'$ a pair
of \gtstr s on $M$. We describe how to define the difference
$D(\varphi, \varphi') \in \bbz$ from Lemma \ref{lem:defd}.

A homotopy of \gtstr s is equivalent to a path of non-vanishing spinor fields.
Any path of spinor fields on $M$ can be identified with a positive spinor
field $\spins$ on $M \times [0,1]$. We can always find $\spins$ 
such that the restrictions to $M \times \{1\}$ and $M \times \{0\}$ are
the non-vanishing spinor fields corresponding to $\varphi$ and $-\varphi'$,
respectively. Then the pull-back by $\spins$ of the Thom class of the positive
spinor bundle defines a relative Euler class in $H^8(W,M)$, independent of 
the choice of $\spins$, and we define $D(\varphi,\varphi')$ to be its
integral $n_+(M \times [0,1], \varphi, \varphi')$.
If we take $\spins$ to have transverse zeroes then we can interpret this
geometrically as the intersection number of the graph of $\spins$ with the zero
section.

It is obvious from this definition that the affine relation \eqref{eq:d_affine}
holds.
If $n_+(M \times [0,1], \varphi, \varphi') = 0$ then $\spins$ can be chosen to
be non-vanishing, so $\varphi$ and $\varphi'$ are homotopic if and only if
$D(\varphi, \varphi') = 0$.
Given $\varphi$ we can construct $\varphi'$ such that
$D(\varphi, \varphi') = 1$ by modifying the defining spinor of $\varphi$ in a
7-disc $B^7$: in a local trivialisation we change it from a constant map
$B^7 \to S^7$ to a degree 1 map. Thus $D$ can take any integer value,
so $D$ really corresponds to the difference function under a bijection
$\bbz \cong \GTH(M)$, completing the proof of Lemma \ref{lem:defd}.

To compute $D(\varphi, \varphi')$, we can consider more general spin
8-manifolds $W$ with boundary $M \sqcup {-M}$. Generalising the above, let
$n_+(W, \varphi, \varphi')$ be the intersection number with the zero section of
a positive spinor whose restriction to the two boundary components correspond
to $\varphi$ and $-\varphi'$. Form a closed spin 8-manifold $\ow$ by gluing the
$M$ piece of the boundary of $W$ to the $-M$ piece. We can define a continuous
positive spinor field on $\ow$ by modifying the spinor field from $W$ in a
$M \times [0,1]$ neighbourhood of the former boundary, to interpolate between
$\varphi'$ on $M \times \{1\}$ and $-\varphi$ on $M \times \{0\}$. Its
intersection number with the zero section is 
$n_+(W, \varphi, \varphi') - D(\varphi, \varphi')$, so we can compute $D$ as
\begin{equation}
\label{eq:dbord}
D(\varphi, \varphi') = n_+(W, \varphi, \varphi') - e_+(\ow) .
\end{equation}

\subsection{The definition of $\nu$} \label{subsec:definition_of_nu}

Let $M$ be a closed spin 7-manifold (not necessarily connected)
with \gtstr{} $\varphi$, and $W$ a compact spin 8-manifold with
$\partial W = M$. Such $W$ always exist since the bordism group
$\Omega^{Spin}_7$ is trivial \cite{milnor63}. The restrictions of the
half-spinor bundles $\spinbpm W$ of $W$ to $M$ are isomorphic to the spinor
bundle on $M$ (Remark \ref{rmk:restrictions}), and
the composition $\spinbp W_{|M} \to \spinbn W_{|M}$ of these isomorphisms is
Clifford multiplication by a unit normal vector field to the boundary.
Let $n_\pm(W,\varphi)$ be the intersection number with the zero section of a
section of $\spinbpm W$ whose restriction to $M$ is the non-vanishing spinor
field defining $\varphi$. Let
\begin{equation}
\label{eq:nubar}
\nubar(W,\varphi) := -2 n_+(W,\varphi) + \chi(W) - 3 \sign(W) \in \bbz .
\end{equation}
Reversing the orientations, $-W$ is a spin 8-manifold whose boundary $-M$
is equipped with a \gtstr{} $-\varphi$.

\begin{lem}
\label{lem:defnu}
Let $W$ be a compact spin 8-manifold, and $\varphi$ a \gtstr{} on $M = \del W$.
\begin{enumerate}
\item \label{it:affine}
If $\varphi'$ is another \gtstr{} on $M$ then
$\nubar(W,\varphi') - \nubar(W, \varphi) = 2D(\varphi,\varphi')$
\item \label{it:Qsemi-char}$\nubar(W,\varphi) \equiv \chi_\Q(M) \mod 2$
\item \label{it:nureverse} $\nubar(-W,-\varphi) = - \nubar(W,\varphi)$
\item \label{it:welldef} 
If $W'$ is another compact spin 8-manifold with $\partial W' = M$ then
the closed spin 8-manifold $X = W \cup_{{\Id}_M} (-W')$ has
\[ 48\ahat(X) = \nubar(W', \varphi) - \nubar(W,\varphi) . \]
\end{enumerate}
\end{lem}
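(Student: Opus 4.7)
The plan is to prove all four parts by combining zero-counting arguments for positive spinor sections with the characteristic class identity of Corollary~\ref{cor:ahat}.

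For (i), I observe that $\chi(W) - 3\sign(W)$ does not depend on $\varphi$, so the statement reduces to the identity $n_+(W, \varphi') - n_+(W, \varphi) = -D(\varphi, \varphi')$ (the sign being a matter of convention from \S\ref{subsec:D}). I would fix a positive spinor $\spins$ on $W$ extending $\varphi$ with $n_+(W, \varphi)$ transverse zeros, then attach a collar $M \times [0,1]$ to $W$ (leaving $W$ unchanged up to diffeomorphism) and extend $\spins$ through the collar by the positive spinor realising the homotopy from $\varphi$ to $\varphi'$, which contributes $D(\varphi, \varphi')$ further zeros by the geometric interpretation in \S\ref{subsec:D}. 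The resulting positive spinor extends $\varphi'$ over $W$; since the algebraic count of zeros depends only on the boundary data up to homotopy, it must equal $n_+(W, \varphi')$, and the identity follows.

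For (iii), the central identity is $n_+(W, \varphi) + n_+(-W, -\varphi) = \chi(W)$. I would glue the positive spinor on $W$ extending $\varphi$ to a compatible positive spinor on $-W$ extending $-\varphi$ to build a positive spinor on the double $\ow := W \cup_M (-W)$, whose total algebraic zero count is then $e_+(\ow)$. Now $\ow$ is the boundary of the spin $9$-manifold $W \times [0,1]$, so spin bordism invariance gives $\ahat(\ow) = 0$; Novikov additivity gives $\sign(\ow) = 0$; and Mayer--Vietoris together with $\chi(M) = 0$ gives $\chi(\ow) = 2\chi(W)$. Applying Corollary~\ref{cor:ahat} to $\ow$ reduces $e_+(\ow)$ to $\chi(W)$. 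Combined with $\chi(-W) = \chi(W)$ and $\sign(-W) = -\sign(W)$, this implies (iii). For (iv), the same gluing construction on the closed manifold $X = W \cup_M (-W')$ gives $e_+(X) = n_+(W, \varphi) + n_+(-W', -\varphi)$, where the gluing convention forces the $-W'$ extension to restrict to $-\varphi$ at $\partial(-W') = -M$. Using the identity of (iii) applied to $W'$ to rewrite $n_+(-W', -\varphi) = \chi(W') - n_+(W', \varphi)$, together with $\chi(X) = \chi(W) + \chi(W')$ and Novikov additivity $\sign(X) = \sign(W) - \sign(W')$, and substituting into Corollary~\ref{cor:ahat}, the formula $48\ahat(X) = \nubar(W', \varphi) - \nubar(W, \varphi)$ falls out by rearrangement.

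Part (ii) reduces to $\chi(W) - 3\sign(W) \equiv \chi(W) + \sign(W) \equiv \chi_\Q(M) \pmod 2$, since $2n_+(W, \varphi)$ is even. This is the classical mod-$2$ signature formula relating the signature of an oriented $4k$-manifold with boundary to the semicharacteristic of the boundary, which I would invoke directly. The main technical obstacle throughout will be fixing sign conventions for the restriction of a positive spinor to $\partial W$, particularly under the orientation reversal that swaps $\spinbp$ and $\spinbn$, so that the gluings in (iii) and (iv) produce genuine sections of $\spinbp$ on the closed manifolds; once this convention is settled once and for all via the product formula in \S\ref{subsec:strs} and Remark~\ref{rmk:restrictions}, the arguments above proceed uniformly.
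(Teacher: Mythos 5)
Your proposal is correct, and parts (i) and (iv) follow essentially the paper's own argument: (i) is the collar-gluing additivity $n_+(W,\varphi) = n_+(M\times I,\varphi,\varphi') + n_+(W,\varphi')$ (your sign bookkeeping for which end of the collar carries $\varphi$ versus $-\varphi'$ is deferred to convention, which is fine since you state the needed identity $n_+(W,\varphi')-n_+(W,\varphi)=-D(\varphi,\varphi')$ correctly), and (iv) is exactly the paper's combination of the spinor-gluing claim on $X=W\cup_{\Id_M}(-W')$, Novikov additivity, $\chi(M)=0$ and Corollary \ref{cor:ahat}. Where you genuinely diverge is (iii): the paper extends the outward unit normal to a vector field $v$ on $W$, notes that $v\cdot\spins\in\Gamma(\spinbn W)$ induces the same boundary \gtstr, and uses Poincar\'e--Hopf to get $n_-(W,\varphi)=n_+(W,\varphi)-\chi(W)$, whence $n_+(-W,-\varphi)=-n_-(W,\varphi)$; you instead double $W$, glue the two positive spinors to a section of $\spinbp\ow$, and evaluate $e_+(\ow)=\chi(W)$ from Corollary \ref{cor:ahat} using $\ahat(\ow)=0$ (since $\ow=\partial(W\times[0,1])$), $\sign(\ow)=0$ and $\chi(\ow)=2\chi(W)$. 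Your route avoids the negative spinor bundle and Clifford multiplication altogether, at the cost of invoking the spinor-gluing step already for (iii) (the paper needs it only in (iv)) together with bordism invariance of $\ahat$; the paper's route is more local and additionally records the relation $n_-=n_+-\chi(W)$, consistent with \eqref{eq:cliff}. For (ii) you cite the classical parity relation $\chi(W)+\sign(W)\equiv\chi_\Q(\partial W)\bmod 2$ rather than proving it; the paper gives the short proof via $\sign(W)\equiv\dim H^4_0(W)\bmod 2$ and the long exact sequence of $(W,M)$ with rational coefficients (its \eqref{eq:sign_chi_mod2}), and you should make sure the version you quote is the one with the rational semi-characteristic, as that is what the $\Q$-coefficient argument yields. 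None of this constitutes a gap.
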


\begin{proof}
\ref{it:affine}
Clearly
$n_+(W,\varphi) = n_+(M \times I, \varphi, \varphi') + n_+(W, \varphi')$. 

\ref{it:Qsemi-char}
For $W^{4n}$ any compact oriented manifold with boundary, $\sign(W)$ is
by definition the signature of a non-degenerate symmetric form
on the image $H^{2n}_0(W)$ of $H^{2n}(W,M) \to H^{2n}(W)$. In particular,
$\sign(W) \equiv \dim H^{2n}_0(W) \mmod 2$. Writing
$\chi(W) = \sum_{i=0}^{2n-1} b_i(W) + \sum_{i=0}^{2n} b_{4n-i}(W)$ and using
$b_{4n-i}(W) = b_i(W,M)$ and
the definition that $\chi_\Q(W) = \sum_{i= 0}^{2n-1} b_i(\del W) \mmod 2$, the
exactness of the sequence
$$0 \to H^0(W,M) \to H^0(W) \to \cdots \to H^{2n-1}(\del W) \to H^{2n}(W,M)
\to H^{2n}_0(W) \to 0$$ implies 
\begin{equation}
\label{eq:sign_chi_mod2}
\sign(W) +\chi(W) \equiv 
\chi_\Q(\del W) \mod 2 .
\end{equation}

\ref{it:nureverse}
Let $v$ be a vector field on $W$ that is a unit outward-pointing normal field
along $M$, and $\spins \in \Gamma(\spinbp W)$ a spinor field whose restriction
to $M$ 
induces $\varphi$. Then the
restriction of the Clifford product $v \cdot \spins \in \Gamma(\spinbn W)$ also
induces $\varphi$. By the
Poincare--Hopf index theorem, the number of zeros of $v$ is $\chi(W)$, so
$n_-(W,\varphi) = n_+(W,\varphi) - \chi(W)$ (these signs are compatible
with~\eqref{eq:cliff}).

Reversing the orientations swaps sections of $\spinbp W$ and $\spinbn W$, and reverses
the signs assigned to the zeros, so $n_+(-W,-\varphi) = - n_-(W,\varphi)$.
It also reverses the signature, but preserves the Euler characteristic.
Thus
\[ \nubar(-W, -\varphi) = 2n_-(W,\varphi) + \chi(W) + 3 \sign(W) =
2n_+(W,\varphi) - 2\chi(W) + \chi(W) + 3 \sign(W) = -\nubar(W,\varphi) .\]

\ref{it:welldef}
$\sign(W) + \sign(-W') = \sign(X)$ by Novikov additivity \cite[7.1]{atiyah68},
$\chi(W) + \chi(-W') = \chi(X)$ because ${\chi(M) = 0}$, and
$X$ has a transverse positive spinor field whose intersection number with the
zero section is $n_+(W,\varphi) + n_+(-W',-\varphi)$.
Hence 
\[ \nubar(W', \varphi) - \nubar(W, \varphi) =
\nubar(W', \varphi) + \nubar(-W, -\varphi) =
2e_+(X) - \chi(X) + 3\sign(X) = 48\ahat(X) \]
by Corollary \ref{cor:ahat}.
\end{proof}

\begin{cor}
\label{cor:defnu}
$\nu(\varphi) : = \nubar(W,\varphi)~\textup{mod}~48 \in \bbz_{48}$ is independent of the choice
of $W$, and
\[ \nu(\varphi') - \nu(\varphi) \equiv 2D(\varphi,\varphi') \mod 48 . \]
\end{cor}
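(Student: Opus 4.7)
The plan is to derive both claims directly from the four parts of Lemma \ref{lem:defnu} that have just been established, so essentially all the hard work is already packaged. The main obstacle is conceptual rather than technical: one must use the fact that the integer $\ahat$-genus of a closed spin 8-manifold controls the ambiguity in $\nubar$ when one changes the coboundary, and this is precisely what part \ref{it:welldef} provides.

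First I would establish well-definedness of $\nu(\varphi)$. Let $W$ and $W'$ be two compact spin 8-manifolds with $\partial W = \partial W' = M$ both inducing the \gtstr{} $\varphi$ on $M$. Form the closed spin 8-manifold $X = W \cup_{\Id_M} (-W')$. By Lemma \ref{lem:defnu}\ref{it:welldef},
\[ \nubar(W',\varphi) - \nubar(W,\varphi) = 48\,\ahat(X). \]
Since $X$ is spin and closed of dimension 8, $\ahat(X) \in \bbz$ by the Atiyah--Singer index theorem (as already noted after \eqref{eq:char}). Hence $\nubar(W',\varphi) \equiv \nubar(W,\varphi) \pmod{48}$, so the residue class $\nu(\varphi) \in \bbz_{48}$ depends only on $(M,\varphi)$ and not on the chosen coboundary. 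The existence of at least one such $W$ is guaranteed by $\Omega^{Spin}_7 = 0$.

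Next I would derive the affine relation. Pick any spin coboundary $W$ of $M$; by well-definedness we can use the same $W$ to compute both $\nu(\varphi)$ and $\nu(\varphi')$. Lemma \ref{lem:defnu}\ref{it:affine} gives the exact equality
\[ \nubar(W,\varphi') - \nubar(W,\varphi) = 2D(\varphi,\varphi') \]
in $\bbz$, and reducing modulo 48 yields
\[ \nu(\varphi') - \nu(\varphi) \equiv 2D(\varphi,\varphi') \pmod{48}, \]
which completes the proof. I do not anticipate any obstacle beyond correctly assembling the two cited parts of Lemma \ref{lem:defnu}; the substantive content (the characteristic class formula of Corollary \ref{cor:ahat}, Novikov additivity, and the spinor counting interpretation of $D$) has already been absorbed into that lemma.
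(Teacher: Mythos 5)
Your proof is correct and is exactly the argument the paper intends: well-definedness mod 48 follows from Lemma \ref{lem:defnu}\ref{it:welldef} together with the integrality of $\ahat$ for closed spin 8-manifolds, and the affine relation is the mod 48 reduction of Lemma \ref{lem:defnu}\ref{it:affine}. No gaps.
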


\noindent
This gives the majority of the proofs of Theorem \ref{thm:defnu} and Proposition
\ref{prop:d_nu}. To complete the proofs it remains only to show the existence
of $Spin(7)$-coboundaries, since Definition \ref{def:nu} 
is phrased in terms of those.  We show the existence of the
required $Spin(7)$-coboundaries in the following subsection.

\subsection{$Spin(7)$-bordisms} \label{subsec:spin7bordisms}

Let $\varphi$, $\varphi'$ be \gtstr s on closed 7-manifolds $M$, $M'$.
A $Spin(7)$-bordism from $(M,\varphi)$ to $(M',\varphi')$ is a compact
8-manifold with boundary $M \sqcup -M'$ and a \spstr{} $\psi$ inducing the
respective \gtstr s on the boundary. More formally, we require that
$\partial W = f(M) \; \sqcup \; f'(M')$ for embeddings $f : M \into \partial W$,
$f' : M' \into \partial W$ that pull back the contraction of $\psi$ with the
outward normal field to $\varphi$ and $-\varphi'$, respectively.
If $M = M'$ then we can form a closed spin 8-manifold by identifying the
boundary components,
\begin{equation}
\label{eq:close}
\ow := W/(f' \circ f^{-1}) .
\end{equation}
Clearly, there is a topologically trivial $Spin(7)$-bordism $W$ (\ie there
is a diffeomorphism $W \cong M \times [0,1]$, but it does not have to preserve
the \spstr) from $\varphi$ to $\varphi'$ if and only if they are
deformation-equivalent, \ie $f^*\varphi'$ is homotopic to $\varphi$ for some
diffeomorphism $f \colon M \cong M$.

\begin{rmk}
If $(W, \psi, f, f')$ is a $Spin(7)$-bordism from $(M,\varphi)$ to
$(M',\varphi')$ then $(W,\psi, f', f)$ is $Spin(7)$-bordism from
$(-M',-\varphi')$ to $(-M, -\varphi)$.
However, it does not follow in general that $-W$ has a $Spin(7)$-structure
making it a $Spin(7)$-bordism from $(M', \varphi')$ to $(M, \varphi)$ (because
the orientation of a $Spin(7)$-structure cannot be reversed).
In particular, if $W$ is a $Spin(7)$-coboundary for $(M,\varphi)$ then
$-W$ is not necessarily a $Spin(7)$-coboundary for $(-M,-\varphi)$,
unless $\chi(W) = 0$, \cf proof of Lemma \ref{lem:defnu}\ref{it:nureverse}.
\end{rmk}

The $Spin(7)$-structure $\psi$ induces a non-vanishing positive spinor field
$\spins$ on $W$. By Remark \ref{rmk:restrictions} the restriction of $s$ to
$\partial W$ is the spinor defining the \gtstr s $\varphi$ and $-\varphi'$,
so $n_+(W, \varphi, \varphi') = 0$.
In particular, when $\varphi$ and $\varphi'$
are \gtstr s on the same manifold $M = M'$, Lemma \ref{lem:d_speuler}
follows from~\eqref{eq:dbord}. Similarly, if $W$ is a $Spin(7)$-coboundary
for $(M, \varphi)$ then $\nubar(W,\varphi) = \chi(W) -3\sign(W) $, so
Corollary \ref{cor:defnu} together with Lemma \ref{lem:spinbordism}(ii) imply
Theorem \ref{thm:defnu}.

\begin{lem}\hfill
\label{lem:spinbordism}
\begin{enumerate}
\item
For a connected compact spin 8-manifold $W$ with connected boundary $M$,
there is a unique homotopy class of \gtstr s on $M$ that bound \spstr s
on $W$.
\item \label{it:spin7bord}
Any \gtstr{} has a $Spin(7)$ coboundary
(any two \gtstr s are $Spin(7)$-bordant).
\end{enumerate}
\end{lem}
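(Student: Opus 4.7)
For part (i), since $W$ is a compact 8-manifold with non-empty boundary, it is homotopy equivalent to a CW complex of dimension at most 7 (e.g.\ via a Morse function with no interior maximum), so $H^8(W) = 0$. A \spstr{} on $W$ corresponds to a non-vanishing section of the rank-8 oriented bundle $\spinbp W$, and the primary obstruction lies in $H^8(W; \pi_7 S^7) = 0$, so \spstr s exist on $W$. For uniqueness, two non-vanishing sections $\spins_0, \spins_1$ of $\spinbp W$ induce \gtstr s $\varphi_0, \varphi_1$ on $M$ with $n_+(W, \varphi_0) = n_+(W, \varphi_1) = 0$ (witnessed by $\spins_0, \spins_1$ themselves), so Lemma \ref{lem:defnu}\ref{it:affine} immediately gives $D(\varphi_0, \varphi_1) = 0$, i.e.\ $\varphi_0$ and $\varphi_1$ are homotopic.

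For the first half of (ii), the plan is to use $\Omega^{Spin}_7 = 0$ \cite{milnor63} to produce some spin coboundary $W_0$ of $M$, then apply (i) to get a \spstr{} on $W_0$ inducing some $\varphi_0$ with $n := D(\varphi, \varphi_0) \in \bbz$. The key computation is that for any closed spin 8-manifold $X$, the induced boundary \gtstr{} of $W_0 \sharp X$ differs from $\varphi_0$ by exactly $e_+(X) - 1$ in $D$. One derives this from the effect of connected sum on $\chi - 3\sigma$ (via Corollary \ref{cor:ahat}) combined with Lemma \ref{lem:defnu}\ref{it:welldef}, using also that the double $DW_0 = W_0 \cup_{\partial} (-W_0)$ carries an orientation-reversing involution, so $\ahat(DW_0)$ (and indeed every Pontryagin number of $DW_0$) vanishes. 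Since $e_+(S^4 \times S^4) = 2$ and $e_+(\bbh P^2) = 0$, iterating connected sums with these two manifolds realises any $n \in \bbz$ as a $D$-shift, producing a $Spin(7)$-coboundary for $(M, \varphi)$.

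For the second half of (ii), given \gtstr s $\varphi, \varphi'$ on closed spin 7-manifolds $M, M'$, applying the first half to $(M, \varphi)$ and $(M', -\varphi')$ yields $Spin(7)$-coboundaries $(W, \psi)$ and $(W'', \psi'')$, and their disjoint union is a $Spin(7)$-bordism from $(M, \varphi)$ to $(M', \varphi')$.

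The hard part will be the exact $D$-shift formula in (ii), as opposed to its residue modulo 24 (which is all that is directly forced by $\nu$): a naive zero-count of the extended spinor gives $e_+(X)$, but the correct shift is $e_+(X) - 1$, with the missing $-1$ reflecting the Euler-number contribution of the ball removed from $W_0$ in the connected-sum construction. Pinning this down uses Lemma \ref{lem:defnu}\ref{it:welldef} as a consistency check together with the vanishing $\ahat(DW_0) = 0$.
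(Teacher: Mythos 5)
Your proposal is correct and takes essentially the same route as the paper: the non-vanishing positive spinor field on a filling exists by obstruction theory, uniqueness of the bounding class follows from the relative Euler number bookkeeping of Lemma \ref{lem:defnu}, and arbitrary homotopy classes are realised by connect-summing a spin filling with closed spin $8$-manifolds having $e_+ \in \{0,2\}$, each summand shifting the bounding class by $e_+ - 1$. Your variations---comparing $n_+(W,\varphi_0)$ and $n_+(W,\varphi_1)$ directly instead of using an auxiliary filling of $-M$, and deriving the shift from Lemma \ref{lem:defnu}\ref{it:welldef} together with $\ahat(DW_0)=0$ rather than from $e_+(X \sharp X') = e_+(X) + e_+(X') - 1$---are equivalent bookkeeping, and your shift formula checks out.
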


\begin{proof}
If $W$ is connected with non-empty boundary then there is no obstruction to
defining a non-vanishing positive spinor field on $W$, so there is some
\gtstr{} $\varphi$ on $M$ that bounds a \spstr{} on $W$.
If $\varphi'$ is another \gtstr{} bounding a \spstr{} on $W$, consider an
arbitrary spin filling $W'$ of $-M$, and let $-\varphi''$ be a \gtstr{} on
$-M$ that bounds a \spstr{} on $W'$. Then $W \sqcup W'$
admits two $Spin(7)$-structures that define bordisms from $\varphi$ and
$\varphi'$, respectively, to $\varphi''$.  Hence 
\[ D(\varphi, \varphi') = D(\varphi, \varphi'') - D(\varphi', \varphi'') = 0 , \]
and $\varphi$ and $\varphi'$ must be homotopic.

For \ref{it:spin7bord}, take any spin filling $W$ of $M$, and let $\varphi$ be
a \gtstr{} on $M$ that bounds a $Spin(7)$-structure. In order to find a
$Spin(7)$-coboundary for some other $\varphi'$ with
$D(\varphi, \varphi') = \pm k$, we use that
if $X$ and $X'$ are closed spin 8-manifolds then, since $\ahat$ and $\sign$
are bordism-invariants, and in particular additive under connected sums,
Corollary \ref{cor:ahat} implies that
\[ e_+(X \sharp X') = e_+(X) + e_+(X') - 1 . \]
(We could also see that for any pair of positive spinor fields $\spins$, $\spins'$ on
$X$, $X'$ one can define a spinor field on $X \sharp X'$ that equals $\spins$ and
$\spins'$ outside the connecting neck, and with a single zero on the neck.)
Therefore $\varphi'$ will bound a \spstr{} on $W'$ the connected sum of $W$
with $k$ copies of a manifold with $e_+ = 2$ or $0$, \eg $S^4 \times S^4$
or $T^8$.
\end{proof}

%

\subsection{Examples of $G_2$-structures on $S^7$}

To make the discussion more concrete, we elaborate on some examples on $S^7$,
%
%
where $D$ can be described in the following direct way.
The spinor bundle of $S^7$ can be trivialised by identifying it with the
restriction of the positive half-spinor bundle on~$B^8$, thus up to homotopy,
a \gtstr{} $\varphi$ on $S^7$ can be identified with a map $f$ from $S^7$ to
the unit sphere in~$\spinrp$.
The difference $D$ between two \gtstr s on $S^7$
equals the difference of the degrees of the corresponding maps $S^7 \to S^7$:
$D(\varphi, \varphi') = \deg f - \deg f'$.

\begin{ex}
\label{ex:round_reverse}
We first illustrate how this description works for the standard round \gtstr{}
$\varphi_{rd}$ and its reverse $\rdbar$, which we already understand from
Example \ref{ex:rdnu}.
By definition, 
$\varphi_{rd}$ corresponds
to a constant map $f_{rd} : x \mapsto \spins_0$. The \gtstr{} $\varphi_{rd}$ is
invariant under the action of $Spin(7)$, and so is $f_{rd}$, in the sense that
$f_{rd}(gx) = \spins_0 = g\spins_0 = gf_{rd}(x)$ for any $g \in Spin(7)$.

Let $r$ be a reflection of $S^7$, and $\rdbar = r^*(-\varphi_{rd})$ as before.
Then $\rdbar$ is invariant under the action of the conjugate subgroup
$rSpin(7)r \subset Spin(8)$. If $x_0 \in S^7$ is a vector orthogonal to the
hyperplane of the reflection, then $\varphi_{rd}$ and $\rdbar$ take the
same value at $x_0$. Thus $\hat f_{rd}(x_0) = \spins_0$, and
$\hat f_{rd}(rgrx_0) = (rgr)\spins_0$ for any $g \in Spin(7)$.
The outer automorphism on $Spin(8)$ of conjugating by $r$ swaps the 
the positive and negative spin representations via Clifford multiplication
by $x_0$, so $(rgr)s_0 = x_0 \cdot(g(x_0 \cdot \spins_0)) =
x_0 \cdot (g(x_0) \cdot \spins_0)$ for $g \in Spin(7)$. Hence
$\hat f_{rd} : S^7 \to S^7$ equals the orientation-preserving diffeomorphism
$c_{x_0}^- \circ \rcliff_{\spins_0} \circ (-r)$, and
$D(\rdbar, \varphi_{rd}) = \deg \hat f_{rd} - \deg f_{rd} = 1$.
\end{ex}


\begin{ex} \label{ex:oct_framing}
Consider the octonionic left-multiplication parallelism on $S^7$, \ie the
trivialisation
of $TS^7$ obtained by considering $u \in S^7$ as a unit octonion and
defining $L_u : \im \bbo \cong T_uS^7$ as left multiplication by $u$.
Its associated \gtstr{} $\varphi_\bbo$ has
$\varphi_\bbo(u) = L_u\varphi_0$ for a fixed \gtstr{} $\varphi_0$.
The associated map $f_\bbo : S^7 \to S^7$ is $u \mapsto \tilde L_u\spins_0$
where $S^7 \to Spin(8), \; u \mapsto \tilde L_u$ is the continuous
lift of ${S^7 \to SO(8),} \; {u \mapsto L_u}$
(with $\tilde L_1 = \Id$) which acts on $\spins_0 \in \spinrp$.

Here is one way to understand $\tilde L_u$. The Moufang identity
$u(xy)u = (ux)(yu)$ holds for any $u,x,y \in \bbo$
\cite[Lemma A.16(c)]{harvey82}, so
$(L_u, R_u, L_u {\circ} R_u) \in SO(8)^3$ preserves the Cayley multiplication.
That can be identified with Clifford
multiplication $\bbr^8 \times \spinrn \to \spinrp$, whose stabiliser in the group
$SO(\bbr^8) \times SO(\spinrn) \times SO(\spinrp)$ is precisely $Spin(8)$
\cite[(5)]{baez02}.
Hence a copy of $S^7$ in $Spin(8)$ whose action on $\bbr^8$ is by $L_u$ must
act on $\spinrp$ by $L_u \circ R_u$. If we choose the identification
$\spinrp \cong \bbo$ so that $\spins_0$ corresponds to 1, then
$f_\bbo(u) = \tilde L_u \spins_0$ corresponds to $u^2$, so $\deg f_\bbo = 2$.
Hence $D(\varphi_\bbo, \varphi_{rd}) = 2$, and $\nu(\varphi_\bbo) = -3$.
\end{ex}

\begin{ex} \label{ex:rd_mod_Z4}
The \gtstr{} $\varphi_{rd}$ is 
invariant under the order 4 diffeomorphism given
by scalar multi\-plication by $i$ on~$S^7 \subset \bbc^4$
(since $i\,\Id \in SU(4) \subset Spin(7)$) so descends to a \gtstr{}
$\varphi_{rd}/\bbz_4$ on the quotient $S^7/\bbz_4$. This is the boundary of the
unit disc bundle of $\co(-4)$ on $\bbc P^3$ (the canonical bundle of
$\bbc P^3$), which has an $SU(4)$-structure restricting to
$\varphi_{rd}/\bbz_4$ (indeed, the total space admits a Calabi--Yau metric
asymptotic to $\bbc^4/\bbz_4$, \cf~Calabi \cite[\S 4]{calabi79}).
The self-intersection number of a hyperplane in the zero-section is $-4$, so
$\sign = -1$, and $\nu(\varphi_{rd}/\bbz_4) = 4 + 3 = 7$.
\end{ex}

\begin{rmk}
While Example \ref{ex:rd_mod_Z4} illustrates that $\nu$ itself is not
multiplicative under covers,
if $\varphi$ and $\varphi'$ are \gtstr s on the same closed spin 7-manifold $M$
and $\cm : \tm \to M$ is a degree $\cd$ covering map then
$D(\cm^*\varphi, \cm^*\varphi') = \cd D(\varphi,\varphi')$.
\end{rmk}

\begin{rmk}
The fact that $\varphi_{rd}$ and $\rdbar$ are both invariant under the
antipodal map on $S^7$ is not incompatible with $D(\varphi_{rd}, \rdbar)$
being odd, because the \gtstr s they define on $\bbr P^7 = S^7/{\pm1}$ induce
different spin structures. The actions of $Spin(7)$ and the conjugate
$rSpin(7)r$ on $\bbr P^7$ can both be lifted to the spinor bundle. Since
$-1$ acts trivially on $\bbr P^7$, its image under either lift will be
$\pm \Id$, and the two spin structures can be distinguished by which of the two
lifts acts as~$+\Id$.

Similarly, $\varphi_{rd}$ defines the same spin structure on $\bbr P^7$ as
the octonionic left-multiplication parallelism of $\bbr P^7$, but not the
right-multiplication one. This is related to the fact that $Spin(7)$ can be
described as the subgroup of $SO(8)$ generated by left multiplication by unit
imaginary octonions, while the subgroup generated by right multiplications is a
conjugate of $Spin(7)$ by a reflection.
\end{rmk}

\section{$\nu$ of twisted connected sum \gtmfdsoft s} \label{sec:torsionfreeG2}

Our motivation for introducing the invariant $\nu$ is to give a tool for
studying the homotopy classes of \gtstr s. 
We now show how the
definition of $\nu$ in terms of $Spin(7)$-bordisms allows us to compute it
for the large class of `twisted connected sum' manifolds with holonomy $G_2$.
Before describing the twisted connected sums, we explain how to compute $\nu$
of \gtstr s defined as products of structures on lower-dimensional manifolds.
This is then used in the proof of Theorem \ref{thm:nu24}, that the torsion-free
\gtstr s of twisted connected sum \gtmfd s always have $\nu = 24$.

\subsection{$SU(3)$ and $SU(2)$-structures}

Let us first describe $SU(3)$ and $SU(2)$-structures in terms of forms,
along the lines of \S \ref{subsec:strs}.

Let $z^k = x^{2k-1} +ix^{2k}$ be complex coordinates on $\bbr^6$. Then 
the stabiliser in $\gl6$ of the pair of forms
\begin{gather*}
\Omega_{0} = dz^1 \wedge dz^2 \wedge dz^3 \in
\Lambda^3(\bbr^6)^{*} \otimes \bbc, \\
\omega_{0} = {\textstyle\frac{i}{2}}(dz^1 \wedge d\bar z^1 +
dz^2 \wedge d\bar z^2 + dz^3 \wedge d\bar z^3) \in \Lambda^2(\bbr^6)^{*},
\end{gather*}
is $SU(3)$. An $SU(3)$-structure $(\Omega, \omega)$ on a 6-manifold induces a
Riemannian metric, almost complex structure and orientation (the volume form
is $-\frac{i}{8}\Omega \wedge \overline \Omega = \frac{1}{6}\omega^3$).
On $\bbr \oplus \bbr^6$
\begin{equation}
dt \wedge \omega_0 + \re \Omega_0 \cong \varphi_0,
\end{equation}
and $SU(3)$ is exactly the stabiliser in $G_2$
of a non-zero vector in $\bbr^7$. The product of a 6-manifold with
$SU(3)$-structure and $S^1$ or $\bbr$ has a product \gtstr, while the boundary
of a 7-manifold with \gtstr{} has an induced $SU(3)$-structure.

The stabiliser in $\gl4$ of the triple of forms
\[ \omega^I_0 = dx^{12} + dx^{34}, \;\omega^J_0 = dx^{13} - dx^{24},
\; \omega^K_0 = dx^{14} + dx^{23} \in \Lambda^2(\bbr^4)^* \]
is $SU(2)$. The stabiliser in $SU(2)$ of a non-zero vector is clearly trivial,
and the boundary of a 4-manifold $W$ with $SU(2)$-structure
$(\omega^I, \omega^J, \omega^K)$ has a natural coframe defined by contracting
each of the three 2-forms with an outward pointing normal vector field.

If $e^1, e^2, e^3$ is a coframe on $\bbr^3$ then
\[ e^{123} + e^1 \wedge \omega^I_0 + e^2 \wedge \omega^J_0 +
e^3 \wedge \omega^K_0 \cong \varphi_0 \]
on $\bbr^3 \oplus \bbr^4$. Therefore the product of a parallelised 3-manifold
and a 4-manifold with $SU(2)$-structure has a natural product \gtstr.
Similarly, if we let $\omega^I_1, \omega^J_1, \omega^K_1$ denote an equivalent
triple of 2-forms on a second copy of $\bbr^4$, and
$\vol_0 = \half(\omega^I_0)^2$ etc, then
\begin{equation}
\label{eq:su2^2}
\vol_0 + \omega^I_0 \wedge \omega^I_1 + \omega^J_0 \wedge \omega^J_1 +
\omega^K_0 \wedge \omega^K_1 + \vol_1 \cong \psi_0
\end{equation}
on $\bbr^4 \oplus \bbr^4$, so the product of two 4-manifolds
$W_0$, $W_1$
with $SU(2)$-structures has a natural product $Spin(7)$-structure.
If $W_0$ is closed while $\partial W_1$ is non-empty, clearly the
\gtstr{} induced on $\partial(W_0 \times W_1)$ by this $Spin(7)$-structure
equals the product of $\omega_0^\bullet$ with the coframe on $\partial W_1$
induced by $\omega_1^\bullet$.

\subsection{Product $G_2$-structures and spinors} \label{subsec:productG2}

Above we described two types of product \gtstr s.
In order to compute $\nu$ of such products, we shall need to describe $SU(3)$ and
$SU(2)$ in terms of spinors.

The half-spin representations $\spinrpm$ of $Spin(6) \cong SU(4)$ are the standard
4-dimensional representation of $SU(4)$ and its dual.  The inclusion
$SU(3) \into SO(6)$ lifts to the obvious inclusion $SU(3) \into SU(4)$, so the
stabiliser of a non-zero element in $\spinrp$ is exactly $SU(3)$. 
Hence, ana\-logously to \S \ref{subsec:G2spinors}, $SU(3)$-structures on a
6-manifold $N$ compatible with a fixed 
spin structure and metric can be defined by positive unit spinor fields
(which always exist and any two are homotopic since the real rank of $\spinbp N$ is 8).

If $N$ is the boundary of a spin 7-manifold $M$, then the half-spinor bundles
on $N$ are both isomorphic, as real vector bundles, to the restriction of
the spinor bundle from $M$. Analogously to Remark \ref{rmk:restrictions},
the restrictions of \gtstr s on $M$ to $SU(3)$-structures on $N$ can be
described equivalently in terms of differential forms or spinors.
As there is no obstruction to extending a
non-vanishing section of a rank 8 bundle on $M$ from the boundary to the
interior, it follows that any $SU(3)$-structure on $N$ is induced as the
boundary of a \gtstr{} on $M$.

\begin{lem}
If $N$ is a 6-manifold with an $SU(3)$-structure
$(\Omega, \omega)$, then the product \gtstr{}
$\varphi = d\theta \wedge \omega + \re \Omega$ on $S^1 \times N$
has $\nu(\varphi) = 0$.
\label{lem:nu_su3}
\end{lem}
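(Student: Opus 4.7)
The plan is to find an explicit $Spin(7)$-coboundary $W$ for $(S^1 \times N, \varphi)$ with $\chi(W) = 0$ and $\sign(W) = 0$, from which $\nu(\varphi) \equiv 0 \pmod{48}$ follows directly from Definition \ref{def:nu}.

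For the construction: since $\Omega^{Spin}_6 = 0$, pick any compact spin $7$-manifold $M$ with $\partial M = N$. Analogously to Lemma \ref{lem:spinbordism}\ref{it:spin7bord} one dimension down, the $SU(3)$-structure on $N$ is given by a unit positive spinor, and this extends to a nonvanishing spinor on all of $M$ since there are no obstructions to extending a section of an oriented rank-$8$ real bundle over a $7$-manifold. So $(\Omega, \omega)$ extends to a \gtstr{} $\tilde\varphi$ on $M$. Take $W = S^1 \times M$ with the product $Spin(7)$-structure $\psi = d\theta \wedge \tilde\varphi + *\tilde\varphi$ from \S\ref{subsec:strs}. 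Using the standard reduction formula $*\tilde\varphi = \tfrac{1}{2} \omega^2 - dt \wedge \im\Omega$ in a collar of $\partial M$, a short local calculation (contracting $\psi$ with the outward normal to $\partial W$) shows that $\psi$ induces the product \gtstr{} $\varphi$ on $\partial W = S^1 \times N$, up to an inessential phase rotation of $\Omega$ and sign conventions that do not affect the homotopy class.

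Multiplicativity of the Euler characteristic gives $\chi(W) = \chi(S^1)\chi(M) = 0$. For the signature, the relative K\"unneth decomposition
\[ H^4(W,\partial W;\bbr) \cong H^4(M,\partial M;\bbr) \,\oplus\, \bigl(H^3(M,\partial M;\bbr) \otimes H^1(S^1;\bbr)\bigr) \]
exhibits both summands as isotropic for the intersection form: the first because cup squares of pulled-back classes land in $H^8(M,\partial M) = 0$, and the second because $d\theta \wedge d\theta = 0$. The mixed pairing reduces by Fubini to the Poincar\'e--Lefschetz pairing $H^4(M,\partial M) \otimes H^3(M,\partial M) \to \bbr$, so the intersection form has hyperbolic block shape $\bigl(\begin{smallmatrix} 0 & A \\ A^T & 0 \end{smallmatrix}\bigr)$ and hence $\sign(W) = 0$. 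Combining, $\nu(\varphi) \equiv \chi(W) - 3\sign(W) = 0 \pmod{48}$. The main technical obstacle is the signature vanishing: because $M$ has boundary, the multiplicativity of the $L$-genus on closed manifolds is unavailable, so the Witt-basis structure of the intersection form is the essential input.
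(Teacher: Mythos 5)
Your proposal is correct and follows essentially the same route as the paper: extend the \sustr{} on $N$ to a \gtstr{} on a spin coboundary $M$ (via the rank-8 spinor extension argument of \S\ref{subsec:productG2}), take $W = S^1 \times M$ with the product \spstr, and conclude from $\chi(W) = \sign(W) = 0$. The only difference is one of detail: the paper simply asserts that the $S^1$ factor forces $\sign(S^1 \times M) = \chi(S^1 \times M) = 0$, whereas you spell out the signature vanishing via the relative K\"unneth decomposition into isotropic summands, which is a fine (if not strictly necessary) elaboration.
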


\begin{proof}
Any spin 6-manifold $N$ bounds some spin 7-manifold $M$, as the bordism group
$\Omega^{Spin}_6$ is trivial \cite{milnor63}.  Then any product \gtstr{} $\varphi$ on
$S^1 \times N$ bounds a product $Spin(7)$-structure on $S^1 \times M$.
The $S^1$ factor makes $\sign(S^1 \times M) = \chi(S^1 \times M) = 0$,
so $\nu(\varphi) = 0$.
\end{proof}

Now we consider dimensions 3 and 4. Before looking at the spinors we prove
a topological lemma.

\begin{lem}
\label{lem:chimod2}
For any compact spin $4$-manifold $W$ with boundary $\tmfd$,
\[ \chi(W) \equiv \chi_2(\tmfd) \mod 2, \]
where $\chi_2(\tmfd)$ is the mod 2 semi-characteristic
$\sum_{i = 0}^{1} \dim H^i(\tmfd;\bbz_2)$.
\end{lem}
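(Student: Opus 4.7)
The plan is to work throughout with $\bbz_2$-coefficients. Because $\chi(W)$ equals both the alternating sum and---modulo $2$---the total sum of the $\bbz_2$-Betti numbers of $W$, the claim reduces to a purely $\bbz_2$-cohomological identity. I will establish it by combining (a) a general topological computation from the long exact sequence of the pair $(W, \tmfd)$ plus $\bbz_2$-Poincar\'e--Lefschetz duality, with (b) a spin-theoretic input from Wu's formula.

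For (a), let
\[ f_2 := \dim_{\bbz_2} \textup{im}\bigl(H^2(W, \tmfd; \bbz_2) \to H^2(W; \bbz_2)\bigr) . \]
Expressing each dimension appearing in the long exact sequence as a sum of dimensions of images of connecting maps, and combining with the PL duality identification $\dim H^i(W, \tmfd; \bbz_2) = \dim H^{4-i}(W; \bbz_2)$, produces a network of relations (for example, $\dim \textup{im}(H^1(\tmfd) \to H^2(W, \tmfd)) = \dim \textup{im}(H^2(W) \to H^2(\tmfd))$ in $\bbz_2$-coefficients). A careful but mechanical bookkeeping, in which almost all terms cancel in pairs, then reduces the desired parity equation to
\[ \chi(W) + \chi_2(\tmfd) \equiv f_2 \pmod 2 . \]
I expect this step to demand the most care: the manipulations are elementary, but there are many terms to track.

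For (b), consider the symmetric $\bbz_2$-bilinear cup-product form $\beta(x,y) := \langle x \cup y, [W, \tmfd] \rangle$ on $H^2(W, \tmfd; \bbz_2)$, where $x \cup y \in H^4(W, \tmfd; \bbz_2)$ is the relative cup product. By Wu's formula $\beta(x,x) = \textup{Sq}^2(x)[W, \tmfd] = \langle v_2 \cup x, [W, \tmfd]\rangle$; since $W$ is spin, $v_2 = w_2(W) = 0$, so $\beta$ is alternating. Factoring $x \cup y = x \cup j^*(y)$ through $H^2(W, \tmfd; \bbz_2) \otimes H^2(W; \bbz_2)$ and invoking the non-degenerate PL duality pairing between these two spaces identifies the radical of $\beta$ with $\ker(j^* : H^2(W, \tmfd; \bbz_2) \to H^2(W; \bbz_2))$, so $\textup{rank}(\beta) = f_2$. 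Every alternating bilinear form over $\bbz_2$ has even rank, hence $f_2 \equiv 0 \pmod 2$, and combining with (a) yields $\chi(W) \equiv \chi_2(\tmfd) \pmod 2$.
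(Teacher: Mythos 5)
Your argument is correct, and its first half coincides with the paper's: the paper also runs the long exact sequence of $(W,\tmfd)$ with $\Z_2$-coefficients together with Poincar\'e--Lefschetz duality to get $\chi(W) \equiv \dim H^2_0(W;\Z_2) + \chi_2(\tmfd) \bmod 2$, where $H^2_0(W;\Z_2)$ is exactly your image of rank $f_2$ (and this step is a short rank count in one exact sequence, less painful than you fear). Where you genuinely diverge is in proving $f_2$ is even. The paper doubles: the non-singular $\Z_2$-intersection form on $H^2_0(W;\Z_2)$ injects as an orthogonal summand into the mod $2$ intersection form of the closed spin $4$-manifold $X = W \cup_{\Id_\tmfd} (-W)$, which is even, and then quotes the algebraic fact (Milnor--Husemoller) that a non-singular even form over $\Z_2$ has even rank. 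You instead stay on $(W,\tmfd)$: the relative cup-square form is alternating by the relative Wu formula ($v_2(W)=w_2(W)+w_1(W)^2=0$ since $W$ is spin -- note it is $w_2+w_1^2$, not $w_2$ alone, though orientability makes them agree), its radical is $\ker j^*$ by the non-degenerate duality pairing, so its rank is $f_2$, which is even because alternating forms over any field have even rank. This intrinsic version is arguably cleaner, avoiding both the double and the "even implies even rank over $\Z_2$" lemma; the one point you should shore up is the Wu formula for a compact manifold \emph{with boundary}, which is less standard than the closed case -- either cite it or observe that it follows from the closed case by pushing a class of $H^2(W,\tmfd;\Z_2)$ forward to the double (at which point your proof and the paper's become close relatives). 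Otherwise everything checks out.
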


\begin{proof}
Repeating the argument in the proof of \eqref{eq:sign_chi_mod2}  
with $\Z_2$-coefficients instead of $\Q$-coefficients shows that there is a
mod~$2$ identity
\begin{equation*} \label{eq:chi_and_sigma}
	\chi(W) \equiv \dim H^2_0(W; \Z_2) + \chi_{2}(\tmfd) \mod 2,
\end{equation*} 
where $H^{2}_0(W; \Z_2)$ is the image of
$H^{2}(W, \tmfd; \Z_2) \to H^{2}(W; \Z_2)$. The intersection form of $W$
defines a non-singular bilinear form over $\Z_2$ on $H^{2}_0(W; \Z_2)$. 
This injects as an orthogonal summand into the mod 2 intersection form of the
manifold $X : = W \cup_{\Id_\tmfd} -W$.  Since $X$ is a closed
spin $4$-manifold, its intersection form is even, 
and hence the form on $H^{2}_0(W; \Z_2)$ is too.
By \cite[Ch.\,III Lemma 1.1]{milnor73} the rank of every non-singular even bilinear form over $\Z_2$
is even, which completes the proof.
\end{proof}

The spin representations of $Spin(4) \cong SU(2) \times SU(2)$ are the standard
2-dimensional complex representations of the two factors. 
Therefore the stabiliser of a non-zero positive spinor is one of the $SU(2)$
factors, and a unit spinor field on a spin 4-manifold defines an
$SU(2)$-structure.

The spin representation of $Spin(3) \cong SU(2)$ is again the standard
representation of $SU(2)$. 
The stabiliser of a non-zero spinor is trivial,
so a unit spinor field defines a parallelism, \ie a trivialisation of the
tangent bundle. For a spin 4-manifold with boundary $\tmfd$, the restriction of
either the positive or negative spinor bundle to $\tmfd$ is isomorphic to the
spinor bundle of $\tmfd$.
The analogue in dimension 4 of Corollary \ref{cor:ahat} is that
\begin{equation}
\label{eq:dim3}
e_\pm(X) = {\textstyle\frac{3}{4}}\sign(X) \pm \half \chi(X)
\end{equation}
for any closed spin 4-manifold $X$ (it suffices to check for $X = S^4$ and
$K3$). Recall Rokhlin's theorem that $\sign(X)$ is divisible by 16.

\begin{lem}
\label{lem:nu_su2}
Let $X$ be a closed 4-manifold with an $SU(2)$-structure
$(\omega^I, \omega^J, \omega^K)$ and $\tmfd$ a closed 3-manifold with a coframe
field $(e^1, e^2, e^3)$. Then
\[ \nu(\varphi) = 24\chi_2(\tmfd)\frac{\sign(X)}{16} \mmod 48 \]
for the product \gtstr{}
$\varphi = e^1 \wedge e^2 \wedge e^3 +
e^1 \wedge \omega^I + e^2 \wedge \omega^J+ e^3 \wedge \omega^K$
on $\tmfd \times X$.
\end{lem}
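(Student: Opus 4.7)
The plan is to construct an explicit $Spin(7)$-coboundary for $(\tmfd \times X, \varphi)$ of the product form $W := W_0 \times X$, where $W_0$ is a spin $4$-manifold filling $\tmfd$ chosen so that the defining positive spinor of the $Spin(7)$-structure on $W$ is a nowhere-vanishing tensor product, and then to read off $\nu$ from Definition~\ref{def:nu} after some arithmetic mod $48$.

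Since $\Omega^{Spin}_3 = 0$, $\tmfd$ bounds some spin $4$-manifold $W_0'$. The coframe on $\tmfd$ corresponds to a unit spinor $\spins_\tmfd \in \Gamma(\spinb \tmfd)$, which extends to a positive spinor on $W_0'$ with some relative Euler number $n_+(W_0', \spins_\tmfd) \in \bbz$. Using the connect-sum identity $e_+(X \sharp X') = e_+(X) + e_+(X') - 1$ from the proof of Lemma~\ref{lem:spinbordism}, I connect-sum $W_0'$ with copies of $K3$ (each decreasing $n_+$ by $1$, since $e_+(K3) = 0$) and of $S^2 \times S^2$ (each increasing $n_+$ by $1$, since $e_+(S^2 \times S^2) = 2$) to arrange that the extension $\spins_{W_0} \in \Gamma(\spinbp W_0)$ is nowhere vanishing. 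Let $\spins_X \in \Gamma(\spinbp X)$ be the unit positive spinor defining the $SU(2)$-structure on $X$. Under the $Spin(4)\times Spin(4)\subset Spin(8)$ decomposition
\[ \spinbp(W_0 \times X) \cong \bigl(\spinbp W_0 \otimes_{\bbh} \spinbp X\bigr) \oplus \bigl(\spinbn W_0 \otimes_{\bbh} \spinbn X\bigr), \]
the section $\Sigma := \spins_{W_0} \otimes \spins_X$ of the first summand is unit-length and nowhere-vanishing, and its restriction to $\partial W = \tmfd \times X$ is the product spinor $\spins_\tmfd \otimes \spins_X$ defining $\varphi$ (the spinor-level counterpart of the form identity $e^{123} + e^1 \wedge \omega^I + e^2 \wedge \omega^J + e^3 \wedge \omega^K \cong \varphi_0$). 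So $\Sigma$ endows $W$ with a $Spin(7)$-structure $\psi$ making $(W, \psi)$ a $Spin(7)$-coboundary of $(\tmfd\times X, \varphi)$.

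By Definition~\ref{def:nu}, $\nu(\varphi) \equiv \chi(W_0)\chi(X) - 3\sign(W_0)\sign(X) \pmod{48}$. The existence of $\spins_X$ forces $e_+(X) = 0$, so \eqref{eq:dim3} gives $\chi(X) = -\tfrac{3}{2}\sign(X)$. Writing $\sign(X) = 16k$ from Rokhlin's theorem, the signature cross-term $-48k\sign(W_0)$ vanishes mod $48$, while the Euler cross-term becomes $-24k\chi(W_0)$. By Lemma~\ref{lem:chimod2}, $\chi(W_0) \equiv \chi_2(\tmfd) \pmod 2$, so $-24k\chi(W_0) \equiv -24k\chi_2(\tmfd) \equiv 24k\chi_2(\tmfd) \pmod{48}$. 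Therefore $\nu(\varphi) \equiv 24\chi_2(\tmfd)\,\sign(X)/16 \pmod{48}$, as claimed.

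The main obstacle is the middle step: pinning down the $Spin(4)\times Spin(4) \subset Spin(8)$ decomposition of $\spinbp$ and verifying, in accordance with the boundary-normal conventions of Remark~\ref{rmk:restrictions}, that $\Sigma$ really does restrict on $\partial W$ to the spinor defining the product $G_2$-structure $\varphi$. The filling construction is routine given the connect-sum formula, and the final arithmetic is immediate from the standard constraints.
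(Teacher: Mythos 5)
Your proposal is correct and follows essentially the same route as the paper: build a spin filling $W_0$ of $\tmfd$, adjust by connected sums (the paper uses $T^4$ or $S^2\times S^2$, you use $K3$ or $S^2\times S^2$ -- both have the right $e_+$) so that the coframe's spinor extends without zeros, take $W_0\times X$ as the $Spin(7)$-coboundary, and run the same mod~$48$ arithmetic using $e_+(X)=0$, \eqref{eq:dim3}, Rokhlin and Lemma \ref{lem:chimod2}. The ``middle step'' you flag is handled in the paper not at the spinor level but via the form identity \eqref{eq:su2^2} and the observation following it that the product $Spin(7)$-structure on (closed)$\times$(bounded) induces the product \gtstr{} on the boundary, so your spinor-bundle decomposition is an equivalent (and standard) way to close that point.
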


\begin{proof}
Pick a spin coboundary $W$ of $\tmfd$. Let $n_+(W,\pll)$ be the intersection
number with the zero section of a positive spinor field on $W$ whose
restriction to $\tmfd$ is the defining spinor field of the parallelism $\pll$
equivalent to the coframe field. We can apply connected sums with $T^4$ or
$S^2 \times S^2$ to make $n_+(W,\pll) = 0$ (this is the same argument as in
Lemma \ref{lem:spinbordism}), so we can assume that $\pll$ bounds an
$SU(2)$-structure on $W$.

If $X$ has an $SU(2)$-structure then $e_+(X) = 0$, so \eqref{eq:dim3} implies
$\chi(X) = -\frac{3}{2}\sign(X)$.
$W \times X$ is a $Spin(7)$-coboundary for~$\varphi$
so, applying Lemma \ref{lem:chimod2} in the final step,
\[ \nu(\varphi) = \chi(W \times X) - 3\sign(W \times X) 
= \big({-}24\chi(W) - 48\sign(W)\big)\frac{\sign(X)}{16} 
= 24\chi_2(\tmfd)\frac{\sign(X)}{16}  \mod 48 . \qedhere \]
\end{proof}

\subsection{Twisted connected sums} \label{subsec:tcs}

Now we sketch the basics of the twisted connected sum construction, ignoring
many details that are required to justify that the resulting \gtstr s
are torsion-free (see \cite{kovalev03, g2m}).
The construction starts from a pair of asymptotically cylindrical Calabi--Yau
3-folds~$V_\pm$. We can think of these as a pair of (usually simply connected)
6-manifolds with boundary $S^1 \times \kd_\pm$, for $\kd_\pm$ a K3 surface.
They are equipped with \sustr s $(\omega_\pm, \Omega_\pm)$ such that on a
collar neighbourhood $C_\pm \cong \hoi{0,1} \times \partial V_\pm$ of the
boundary 
\begin{equation}
\begin{aligned}
\omega_\pm &= dt \wedge d\anglen + \omega^I_\pm , \\
\Omega_\pm &= (d\anglen - idt) \wedge (\omega^J_\pm + i \omega^K_\pm) ,
\end{aligned}
\end{equation}
where $\anglen$ is the $S^1$-coordinate, $t$ is
the collar coordinate and $(\omega^I_\pm, \omega^J_\pm, \omega^K_\pm)$ is an
$SU(2)$-structure on~$\kd_\pm$.
The construction assumes that there is
a diffeomorphism $f : \kd_+ \to \kd_-$
such that
\[ f^*\omega^I_- = \omega^J_+, \; f^*\omega^J_- = \omega^I_+ \textrm{ and }
f^*\omega^K_- = -\omega^K_+ . \]
Now define \gtstr s on $S^1 \times V_\pm$ by
\[ \varphi_\pm = d\anglex \wedge \omega_\pm + \re \Omega_\pm, \]
where $\anglex$ denotes the $S^1$-coordinate, and a diffeomorphism
\begin{equation*}
\begin{aligned}
F : \; \partial(S^1 \times V_+) \cong
S^1 \times S^1 \times \kd_+ \; & \longrightarrow \;
S^1 \times S^1\times \kd_- \cong \partial(S^1 \times V_-) , \\
(\anglex, \anglen, x) \; & \longmapsto (\anglen, \anglex, f(x)) .
\end{aligned}
\end{equation*}
In the collar neighbourhoods $C_\pm$
\begin{equation*}
\varphi_\pm 
= d\anglex\wedge dt \wedge d\anglen + d\anglex\wedge\omega^I_\pm +
d\anglen\wedge\omega^J_\pm + dt \wedge \omega^K_\pm ,
\end{equation*}
so $\varphi_+$ and $\varphi_-$ patch up to a well-defined \gtstr{} $\varphi$
on the closed manifold 
\begin{equation} \label{eq:tcs}
M = (S^1 \times V_+) \cup_F (S^1 \times V_-). \end{equation}
One arranges that this \gtstr{} can be perturbed to a torsion-free one.
Because $F$ swaps the
circle factors at the boundary, $M$ is simply-connected if $V_+$ and $V_-$ are.

\subsection{A $Spin(7)$-bordism}

We now proceed with the proof of Theorem \ref{thm:nu24}, that the twisted
connected sum \gtstr s defined above always have $\nu = 24$.
Consider the diffeomorphism
\[ \tf = \Id \times {-\Id} \times f \; : \;
S^1 \times S^1 \times \kd_+ \to S^1 \times S^1 \times \kd_- , \]
and the ``untwisted connected sum''
$\tm = (S^1 \times V_+) \cup_{\tf} (S^1 \times V_-)$.
Then $\tm = S^1 \times N$, where $N = V_+ \cup_{-\Id \times f} V_-$.
Let $r$ denote the right angle rotation
$(\anglex, \anglen) \mapsto (\anglen, -\anglex)$ of $S^1 \times S^1$
and $g := F \circ \tf^{-1}$, and let $T_r$ and $T_g$ denote their mapping tori.
Then $g  = r \times \Id_\kd$, so $T_g \cong T_r \times \kd$.

To compute $\nu(\varphi)$ of the twisted connected sum \gtstr{} $\varphi$ on
$M$ and prove Theorem \ref{thm:nu24} we will construct a $Spin(7)$-bordism $W$
to product \gtstr s on $\tm \sqcup T_g$.
Let
\begin{align*}
B_\pm &= \left\{ (y-\half)^2 + t^2 < {\textstyle \frac{1}{16}} \right\}
\subset I \times S^1 \times C_\pm, \\ 
W_\pm &= I \times S^1 \times V_\pm \setminus B_\pm ,
\end{align*}
where $y$ denotes the $I$-coordinate, and $t$ the collar coordinate on
$C_\pm \subset V_\pm$ as before.
$\partial W_\pm$ is a union of five pieces, meeting in edges at
$\{y\}\times S^1 \times S^1 \times \kd$ for $y = 0, \quart, \threequart$ and 1:
a `top' and `bottom' piece each diffeomorphic to $S^1 \times V_\pm$,
$[0,\quart] \times S^1 \times S^1 \times \kd_\pm$ and
$[\threequart,1] \times S^1 \times S^1 \times \kd_\pm$, and
$E_\pm := \left\{ (y-\half)^2 + t^2 = \quart \right\}
\subset I \times S^1 \times C_\pm$.

We form a `keyhole' bordism $W$ by gluing some of these pieces: identify
$[0, \quart] \times S^1 \times S^1 \times \kd_\pm$ via $\Id \times \tf$,
and $[\threequart, 1] \times S^1 \times S^1 \times \kd_\pm$ via $\Id \times F$.
Then $\partial W$ is a disjoint union $M \sqcup \tm \sqcup T_g$, where
$M$~is formed by gluing the top pieces of $\partial W_+$ and $\partial W_-$ and
$\tm$ by gluing the bottom pieces, while the keyhole boundary component
$E_+ \cup E_-$ can be identified with the mapping torus $T_g$.

\newcommand{\insertpic}{
\begin{figure}

\psset{unit=1mm}
\begin{pspicture}(100,100)
\psset{origin={50,50}}
\newgray{intergray}{0.75}
\newgray{vlight}{0.85}
\newgray{darkgray}{0.5}
\pscustom[linestyle=none,fillstyle=solid,fillcolor=intergray]{
\psarc(0,0){15}{-90}{-45}
\psbezier(15,-15)(15,-30)(15,-40)
\psline(0,-40)}
\pscustom[linestyle=none,fillstyle=solid,fillcolor=vlight]{
\psarcn(0,0){15}{0}{-45}
\psbezier(15,-15)(15,-30)(15,-40)
\psline(30,-40)
\psbezier(30,-20)(25,0)(15,0)
}

\psset{linecolor=darkgray}
\psline(-50,-40)(-50,40)
\psline(50,-40)(50,40)
\psset{linecolor=black}
\psline(0,40)(0,15)
\psline(0,-40)(0,-15)
\psset{linewidth=2pt}
\psline(-50,-40)(50,-40)
\psline(50,40)(-50,40)
\pscircle(0,0){15}

\rput(50,50){%
\rput(-25,15){$W_+$}
\rput(25,15){$W_-$}
\rput(30,-3){$\rho = 0$}
\rput(7,-35){$\rho = \frac{\pi}{2}$}
\rput(0,45){$M$}
\rput(0,-45){$\tm = S^1 \times N$}
\rput(-7,3){\parbox{10mm}{\begin{gather*}T_g =\\T_r {\times} \kd\end{gather*}}}
\rput(-7,27){$\Id \times F$}
\rput(-7,-27){$\Id \times \tf$}
\rput(57, 40){$y = 1$}
\rput(57, -40){$y = 0$}
}
\end{pspicture}

\caption{The `keyhole' bordism $W$} 
\label{thepic}
\end{figure}
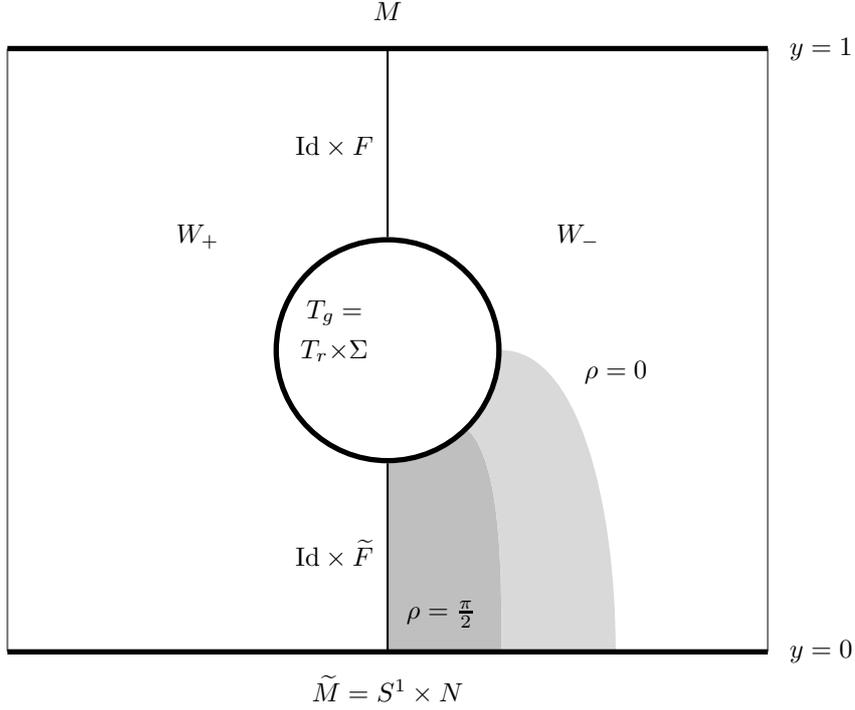}

\insertpic

It is easy to compute that $H_1(T_r) \cong \bbz \times \bbz_2$, so
$\chi_2(T_r) \equiv 1$. Since $\sign(\kd) = -16$, Lemma \ref{lem:nu_su2}
implies that any product \gtstr{} on $T_r \times \kd$ has $\nu = 24$,
while a product \gtstr{} on $\tm$ has $\nu = 0$.
To complete the calculation of $\nu(\varphi)$ it remains to show that
$W$ does indeed admit a suitable $Spin(7)$-structure, and to compute
the topological invariants of the $Spin(7)$-bordism~$W$.

\begin{lem}
$\chi(W) = 0$ and $\sign(W) = -16$.
\end{lem}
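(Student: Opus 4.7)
The plan is to compute $\chi(W)$ and $\sign(W)$ separately, starting from the explicit decomposition $W = W_+ \cup W_-$ and the building blocks $W_\pm = I \times S^1 \times V_\pm \setminus B_\pm$.

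For $\chi(W) = 0$, I would apply inclusion--exclusion twice. First, for each $\pm$, the cover $I \times S^1 \times V_\pm = W_\pm \cup \overline{B_\pm}$ intersects in $\overline{E_\pm}$. Each of $I \times S^1 \times V_\pm$, $\overline{B_\pm} \cong D^2 \times S^1 \times S^1 \times \kd_\pm$ and $\overline{E_\pm} \cong I \times S^1 \times S^1 \times \kd_\pm$ contains a circle factor and so has vanishing Euler characteristic; hence $\chi(W_\pm) = 0$. Second, the two gluing regions (pieces 3 and 4) are also of the form interval $\times S^1 \times S^1 \times \kd_\pm$ and have vanishing Euler characteristic, so Mayer--Vietoris gives $\chi(W) = \chi(W_+) + \chi(W_-) = 0$.

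For $\sign(W) = -16$ the key heuristic is that the signature must be localised near the keyhole, where the $\kd$ surface appears; the bulk of $W$, which carries a free $S^1$-action (translation of the outer circle), should contribute nothing. Concretely, I would fill the boundary component $T_g \cong T_r \times \kd$ by a product coboundary $Q \times \kd$, where $Q$ is a spin $4$-manifold with $\partial Q = T_r$. Then $\hat W := W \cup_{T_g}(Q \times \kd)$ is a spin bordism from $M$ to $\tm$, and by Novikov additivity together with multiplicativity of signatures under products
\begin{equation*}
\sign(W) \;=\; \sign(\hat W) \,-\, \sign(Q)\sign(\kd) \;=\; \sign(\hat W) \,+\, 16\sign(Q).
\end{equation*}
The goal is to choose $Q$ so that (i) the $90^\circ$ twist $r$ at $\partial Q = T_r$ cancels the twist inserted into $W$ when the gluing changes from $\tf$ to $F$, so that the free $S^1$-action extends across $\hat W$ and forces $\sign(\hat W) = 0$; and (ii) $\sign(Q) = -1$, yielding $\sign(W) = -16$.

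The main obstacle is justifying step (i) rigorously: one needs either to exhibit an explicit $S^1$-action on $\hat W$ and invoke the $G$-signature theorem, or to bypass the filling altogether by applying Wall's non-additivity formula directly to $W = W_+ \cup W_-$. In the latter approach one first shows $\sign(W_\pm) = 0$ via Novikov additivity on $I \times S^1 \times V_\pm = W_\pm \cup \overline{B_\pm}$ using the free $S^1$-actions, and then computes the Wall correction arising from the Lagrangian subspaces in $H_3$ of the corner manifolds $T^2 \times \kd_\pm$; it is here that the $\kd$ summand is isolated to produce the factor of $-16$.
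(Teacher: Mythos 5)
Your computation of $\chi(W)=0$ is fine and is essentially the paper's argument (every piece in sight carries an $S^1$ factor); in fact $\chi(W_\pm)=0$ is immediate without the inner inclusion--exclusion, since $W_\pm$ is itself a product of $S^1$ with a $7$-manifold with boundary.

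The signature half, however, is not a proof: both of your routes leave out exactly the step that carries the content. In route (a), the Novikov additivity identity $\sign(W)=\sign(\hat W)+16\sign(Q)$ is correct, but neither input is established, and the key claim (i) is dubious as stated: a free $S^1$-action on $\hat W$ would restrict to the boundary component $M$, and the twisted connected sum admits no evident circle action --- the gluing $F$ interchanges the two circle directions, and $M$ is typically $2$-connected with no reason to carry a free $S^1$-action at all. Even granting some circle action away from the boundary, the $G$-signature theorem on a manifold with boundary picks up eta-invariant corrections, so vanishing of $\sign(\hat W)$ does not follow formally. More fundamentally, deciding which filling $Q$ of $T_r$ ``cancels the twist'' and verifying $\sign(Q)=-1$ is precisely where the geometry of the $90^{\circ}$ rotation enters; by your own displayed identity this is equivalent to knowing $\sign(W)$, so the plan defers the original problem rather than solving it.

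Route (b) is the paper's actual proof, but you only name it. The substance is the part you omit: the corner is $Z=\{0,\quart,\threequart,1\}\times T^2\times\kd$ (four copies of $T^2\times\kd$, not two), and one must identify the three Lagrangian subspaces $A,B,C\subset H^3(Z;\R)$ determined by the glued boundary pieces $X_0$ and by $X_\pm$, then compute Wall's correction form $q$ on $K=\frac{A\cap(B+C)}{(A\cap B)+(A\cap C)}$. The paper does this explicitly, using the images $N_\pm$ of $H^2(V_\pm)$ in $H^2(\kd)$ and their orthogonal complements $T_\pm$, and shows $(K,q)$ splits orthogonally as a summand isometric to $H^2(\kd)$ (signature $-16$) plus a sum of isotropic subspaces (signature $0$); that computation is the lemma. (Also, $\sign(W_\pm)=0$ does not follow from Novikov additivity applied to $I\times S^1\times V_\pm=W_\pm\cup\overline{B_\pm}$, since $E_\pm$ is not a full boundary component of either piece; it follows directly from the $S^1$ factor of $W_\pm$, because squares of classes in the image of relative cohomology then pair hyperbolically.)
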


\begin{proof}
For the Euler characteristic, we use the usual inclusion-exclusion formula.
The spaces $W_+$, $W_-$ and $W_+ \cap W_-$ all contain $S^1$ factors, so
$\chi(W) = \chi(W_+) + \chi(W_-) - \chi(W_+ \cap W_-) = 0$.

For the signature, we must apply Wall's signature formula \cite{wall69}
because $W$ is formed by gluing $W_+$ and $W_-$ along only parts of boundary
components. 
The piece of the boundaries of $W_+$ and $W_-$ that we glue is
$X_0 = \left([0, \quart] \sqcup [\threequart, 1]\right) \times T^2 \times \kd$.
Let $Z = \partial X_0 =
\left\{0, \quart, \threequart, 1\right\} \times T^2 \times \kd$ (the edges of
$\partial W_\pm$), and
\[ X_\pm := \partial(W_\pm) \setminus X_0 = \bigl( \{0,1\} \times S^1 \times V_\pm \bigr)
\; \sqcup \; E_\pm , \]
where $E_\pm$ are the keyhole pieces as defined above.

Throughout this proof we will use real coefficients for all cohomology groups.
We need to identify the images $A$, $B$ and $C$ in $H^3(Z)$ of $H^3(X_0)$,
$H^3(X_+)$ and $H^3(X_-)$, respectively; each is a Lagrangian subspace with
respect to the intersection form $(\,\,,\,)$ on $H^3(Z)$.
The vector space $K = \frac{A \cap (B+C)}{(A \cap B) + (A \cap C)}$ admits the
following natural non-degenerate symmetric bilinear form $q$:
if $a, a' \in A \cap (B+C)$ (representing $[a], [a'] \in K$)
and $a' = b' + c'$, $b' \in B$, $c' \in C$, then we set
\[ q([a], [a']) := -(a,b'). \]
Since $W_\pm$ both have signature 0, the
signature formula \cite[Theorem p.\,271]{wall69}
implies that the signature of $W$ equals the signature of $(K,q)$.

We can identify $Z_y := \{y\} \times T^2 \times \kd$ with
$S^1 \times \partial V_+$. On $Z_y$, let $\anglex$ denote the coordinate on the
$S^1$ factor from $S^1 \times V_+$, and $\anglen$ the coordinate on the $S^1$
factor in $\partial V_+$. Let $\theta_+ = [d\anglex]$ and
$\theta_- = [d\anglen] \in H^1(Z_y)$. If $w \in H^4(\kd)$ is positive then
$\theta_+ \wedge \theta_- \wedge w \in H^6(Z_y)$ is positive with respect to
the orientation on $Z_y$ given by the identification with
$S^1 \times \partial V_+$.
The orientation on $Z$ that we should use to define its intersection form in
the application of the signature formula is that induced as the boundary
of $X_+$, \ie
\[ Z = Z_1 \sqcup - Z_\threequart \sqcup \; Z_\quart \sqcup - Z_0 . \]
Since the K3 surface $\kd$ has no cohomology in odd degrees, the vector space
$H^3(Z)$ decomposes as the sum of 8 copies of $L := H^2(\kd)$: we let $L_{y\pm}$
denote the image of $L \to H^3(Z_y), \; \ell \mapsto \theta_\pm \wedge \ell$.
(This means for example that if $\alpha_\pm \in H^2(V_\pm)$ then the
restriction of $[d\anglex] \wedge \alpha_\pm \in H^3(W_\pm)$ to $Z_y$ lies in
$L_{y+}$ for $y = 0, \quart$, and in $L_{y\pm}$ for $y = \threequart, 1$.)
For $\hz \in H^3(Z)$, let $\hz_{y\pm} \in L$ denote the $L_{y\pm}$ component
under this isomorphism.
Then the intersection form on $H^3(Z)$ is given in terms of the inner
product $\inner{\,\,,\,}$ on $L$ by
\begin{equation}
\label{eq:intZ}
\begin{aligned}
(\hz,\hz') &= \inner{\hz_{1+}, \hz'_{1-}} - \inner{\hz_{1-}, \hz'_{1+}}
- \inner{\hz_{\threequart+}, \hz'_{\threequart-}}
+ \inner{\hz_{\threequart-}, \hz'_{\threequart+}} \\
&+ \inner{\hz_{\quart+}, \hz'_{\quart-}}
- \inner{\hz_{\quart-}, \hz'_{\quart+}}
- \inner{\hz_{0+}, \hz'_{0-}} + \inner{\hz_{0-}, \hz'_{0+}} .
\end{aligned}
\end{equation}
Let $N_\pm$ denote the image of $H^2(V_\pm)$ in $H^2(\kd) \cong L$,
and $T_\pm \subset L$ the orthogonal complement. By the long exact sequence
of the pair $(V_+, S^1 {\times} \kd_+)$ and Poincar\'{e}-Lefschetz duality, the
image of $H^3(V_+)$ in $H^3(S^1 \times \kd)$ is the annihilator of the image of
$H^2(V_+)$ under the intersection pairing,
which equals $[d\anglen] \wedge T_+$. We find that
\begin{equation}
\label{eq:abc}
\begin{aligned}
A &= \{ \hz \in H^3(Z) :
\hz_{0\pm} = \hz_{\quart \pm}, \;
\hz_{\threequart \pm} = \hz_{1 \pm} \}, \\
B &= \{ \hz \in H^3(Z) : \hz_{0+}, \hz_{1+} \in N_+, \;
\hz_{0-}, \hz_{1-} \in T_+, \;
\hz_{\quart \pm} = \hz_{\threequart \pm} \}, \\
C &= \{ \hz \in H^3(Z) : \hz_{0+}, \hz_{1-} \in N_-, \;
\hz_{0-}, \hz_{1+} \in T_-, \;
\hz_{\quart \pm} = \pm \hz_{\threequart \mp} \}.
\end{aligned}
\end{equation}

Given an element of $K$ represented by $a = b + c$, we can certainly find
some $h \in A \cap B$ with $h_{1\pm} = b_{1\pm}$. Replacing $a$ by $a-h$, we
may assume without loss of generality that $b_{1\pm} = 0$. Similarly we
can assume $c_{1\pm} = 0$, and then $a_{1\pm} = 0$ too.
Setting
\[ n := a_{0+}, \; t := a_{0-}, \; n_+ := b_{0+}, \; t_+ := b_{0-},
\; n_- := c_{0+}, \; \textrm{ and }t_- := c_{0-}, \]
the remaining components are determined by \eqref{eq:abc}. Thus we find that
any element of $K$ can be represented by $a = b+c$ such that
\[ a = \begin{pmatrix} 0 & 0 \\ 0 & 0 \\  n & t \\ n & t \end{pmatrix},
\quad
b = \begin{pmatrix} 0 & 0 \\ \frac{n+t}{2} & \frac{-n+t}{2} \\
\frac{n+t}{2} & \frac{-n+t}{2} \\ n_+ & t_+ \end{pmatrix},
\quad
c = \begin{pmatrix} 0 & 0 \\ \frac{-n-t}{2} & \frac{n-t}{2} \\
\frac{n-t}{2} & \frac{n+t}{2} \\ n_- & t_- \end{pmatrix}  \]
(where the top left matrix entry corresponds to the $1+$ component etc), and
\[ n_\pm \in N_\pm, \quad t_\pm \in T_\pm, \quad
n = n_+ + n_-, \quad t = t_+ + t_- . \]
Representing a pair of classes $[a], [a'] \in K$ by elements of that form,
applying \eqref{eq:intZ} and rearranging gives
\begin{equation}
\label{eq:wall}
\begin{aligned}
2q([a], [a']) = -2(a,b') &= -\inner{n, -n'{+}t'} + \inner{t, n'{+}t'}
+ \inner{n, 2t'_+} - \inner{t, 2n'_+} \\
&= \inner{n, n'} + \inner{t, t'} +
\inner{n, t'_+ {-} t'_-} + \inner{t, -n'_+ {+} n'_-} .
\end{aligned}
\end{equation}
Now consider
\begin{align*}
K_0   &= \{ [a] \in K : n \in N_+ \cap N_-, \; t \in T_+ {+} T_- \}, \\
K_\pm &= \{ [a] \in K : n = t \in N_\pm \cap (T_+ {+} T_-) \}.
\end{align*}
If we use \eqref{eq:wall} to evaluate the product of two elements of $K_0$
then the cross terms $\gen{n, t'}$ etc vanish, and
$q([a], [a']) = \gen{n,n'} + \gen{t, t'} = \gen{n+t, n'+t'}$. Hence $K_0$ is
isometric to $L$, so has signature $-16$.

If $[a] \in K_+$, then the RHS of \eqref{eq:wall} reduces to $2\gen{t, n'_-}$,
which vanishes if $[a'] \in K_0 + K_+$. Similarly $K_-$ is orthogonal to
$K_0 + K_-$. This implies in particular that $K_+$ and $K_-$ are transverse,
and since $K_+ \oplus K_-$ is a sum of isotropic spaces it has signature 0.

Finally, note that $K_+ \oplus K_-$ is a complement to $K_0$ in $K$: given
$(n,t) \in (N_+ + N_-) \times (T_+ + T_-)$ we can certainly subtract
an element of $N_+ \cap N_-$ from $n$ to ensure that $n \in T_+ + T_-$, and
then an element of $T_+ + T_-$ from $t$ to ensure $n = t$. Hence the orthogonal
complement to $K_0$ is precisely $K_+ \oplus K_-$, and
\[ \sign(W) = \sign(K) = \sign(K_0) + \sign(K_+ \oplus K_-) = -16 . \qedhere \]
\end{proof}

To finish the proof of Theorem \ref{thm:nu24}, we need to exhibit a
$Spin(7)$-structure on $W$ with the right restrictions to the boundary
components: the restriction to $M$ should be the twisted connected sum
\gtstr{}~$\varphi$, while the restrictions to $\tm = S^1 \times N$ and
$T_g = T_r \times \kd$ should be product \gtstr s.
We can define an $SU(3)$-structure on $N$ as follows. Let $V'_-$ be the
complement of the collar neighbourhood $C_- \subset V_-$. On $C_-$ set
\begin{align*}
\omega' &= dt \wedge d\anglen + c_\rho \omega^I_- + s_\rho \omega^J_- , \\
\Omega' &= (d\anglen - idt) \wedge
(c_\rho \omega^J_- - s_\rho \omega^I_- + i \omega^K_-) ,
\end{align*}
where $c_\rho = \cos \rho, \; s_\rho = \sin \rho$ for a smooth function $\rho$
supported on $C_-$, such that $\rho = \frac{\pi}{2}$ on $\partial V_-$.
Take $\tomega$ to be $\omega_+$ on $V_+$, $\omega'$ on $C_-$, and $\omega_-$
on $V'_-$, and define $\tOmega$ analogously. Then $(\tomega, \tOmega)$ is a
well-defined \sustr{} on $N$, and $\tv = d\theta \wedge \tomega + \re \tOmega$
is a product \gtstr{} on $\tm$. 

Next we define the \spstr{} $\psi$ on $W$.
Let $y$ be the $I$ coordinate on each half.
First, define $\rho$ on $I \times C_-$ to be $\frac{\pi}{2}$ on a
neighbourhood of $[0, \quart] \times \partial V_-$ and have compact
support in $\hoi{0,\half} \times C_-$ (see Figure \ref{thepic}),
and use this to define forms $\tomega$ and $\tOmega$ on $I \times V_-$.
Since $dy$ is a global covector field on~$W_0$, defining a \spstr{} is
equivalent to defining a \gtstr{} on each slice $y =$ const. Take this
to be $\varphi_+ = d\theta \wedge \omega_+ + \re \Omega_+$ on
$\{y\} \times S^1 \times V_+$, and $d\theta \wedge \tomega + \re \tOmega$
on $\{y\} \times S^1 \times V_-$.
Then the restriction of $\psi$ to the boundary components $M$ and $\tm$ are
$\varphi$ and $-\tv$ respectively, as desired.

Finally we show that the restriction of $\psi$ to the `keyhole' boundary component
$T_g = E_+ \cup E_-$ is a product \gtstr{} too.
We first outline the argument, starting from
$E_\pm \cong [0, \pm \pi] \times S^1 \times S^1 \times \kd_\pm$ (the first factor
corresponding to one half of the circle $\{(y-\half)^2 + t^2 = \frac{1}{16}\}$)
being embedded as a product inside $I \times C_\pm$.
The restriction of $\psi$ to $I \times C_\pm$ is a product of two
$SU(2)$-structures, so the induced \gtstr{} on $E_\pm$ is a product of a
coframe field on $[0, \pm \pi] \times S^1 \times S^1$ and an $SU(2)$-structure
on $\kd$.
The coframes on the two copies of $[0,\pm \pi] \times S^1 \times S^1$ patch up
to a coframe on their union $T_r$, and the \gtstr{} on $T_g$ is the product of
that with an $SU(2)$-structure on~$\kd$.

In order to fill in the details of this sketch we need to write down the
structures explicitly, which is rather cumbersome.
To make the notation slightly more manageable we will use a complex form
as a shorthand for an ordered pair of real forms, so that an $SU(2)$-structure
can be defined by one complex and one real 2-form, or a coframe field on
a 3-manifold by one complex and one real 1-form.
Also, we identify both $\kd_+$ and $\kd_-$ with
a standard K3 surface $\kd$, so that $f$ corresponds to $\Id_\kd$.
Setting $y = -\half c_\alpha + \half, \; t = \half s_\alpha$
for $\alpha \in [0,\pi]$ lets us identify $E_+ \subset I \times C_+$ with
$[0,\pi] \times S^1 \times S^1 \times \kd$.
On $I \times C_+$, $\psi$ is the product of the $SU(2)$-structure
\begin{equation}
\label{eq:su2tr}
\big( (dy - idt) \wedge (d\anglex + i d\anglen), \;
dy \wedge dt - d\anglex \wedge d\anglen \big)
\end{equation}
on $I \times \hoi{0,1} \times S^1 \times S^1$ and
$(\omega^I_+ +i\omega^J_+, \; \omega^K_+)$ on $\kd$.
The induced \gtstr{} on $E_+$
is given by contraction with the normal vector field
$c_\alpha \contra{y} - s_\alpha \contra{t}$. The result is the product of
the same $SU(2)$-structure on $\kd$ with the coframe field
$(e^{i\alpha}(d\anglex +i d\anglen), \; \half d\alpha)$ on
$[0,\pi] \times S^1 \times S^1$.

Similarly, for $\alpha \in [\pi, 2\pi]$
we set $y = -\half c_\alpha + \half, \; t = -\half s_\alpha$ to identify
$[\pi,2\pi] \times S^1 \times S^1 \times \kd_- \cong E_-$.
On $I \times C_-$, the restriction of $\psi$ is given by the product
of \eqref{eq:su2tr} on $I \times \hoi{0,1} \times S^1 \times S^1$
and $\left(e^{-i\rho} (\omega^I_- + i \omega^J_-), \; \omega^K_-\right)$ on the
tangent space to the $\kd$ factor. Contracting with the normal vector field
$c_\alpha \contra{y} + s_\alpha \contra{t}$ gives the coframe
$\left(e^{-i\alpha}(d\anglex + id\anglen), \; -\half d\alpha \right)$ on
$[\pi,2\pi] \times S^1 \times S^1$. 
Now, as product \gtstr s
\begin{multline*}
\left(e^{-i\alpha}(d\anglex + id\anglen), \; -\half d\alpha \right) \cdot 
\left(e^{-i\rho} (\omega^I_- + i \omega^J_-), \; \omega^K_-\right) = \\
\left(e^{i(\rho-\alpha)}(d\anglex + id\anglen), \; -\half d\alpha \right)
\cdot (\omega^I_- + i \omega^J_-, \; \omega^K_-) = 
\left(e^{i(\alpha-\rho)}(d\anglen + id\anglex), \; \half d\alpha \right)
\cdot (\omega^I_+ + i \omega^J_+, \; \omega^K_+) .
\end{multline*}

$T_g$ is formed by gluing the boundaries of
$[0,\pi] \times S^1 \times S^1 \times \kd$ and
$[\pi,2\pi] \times S^1 \times S^1 \times \kd$ using
$(\pi, \anglex, \anglen, x) \mapsto (\pi, \anglen, \anglex, x)$
and $(0, \anglex, \anglen, x) \mapsto (2\pi, \anglex, -\anglen, x)$.
These maps preserve the $SU(2)$-structure on the $\kd$ factor, and match up
the coframes $(e^{i\alpha}(d\anglex +i d\anglen), \; \half d\alpha)$ and
$\left(e^{i(\alpha-\rho)}(d\anglen + id\anglex), \; \half d\alpha \right)$ to a
well-defined coframe on $T_r$ (since $\rho = 0$ at $\alpha = \pi$ and
$\rho = \frac{\pi}{2}$ at $\alpha = 0$, $2\pi$).
Thus the \gtstr{} on $T_g = T_r \times \kd$ is a product, completing the proof
of Theorem \ref{thm:nu24}.

\subsection{Orbifold resolutions} \label{subsec:orbifolds}

For some of Joyce's examples of compact \gtmfd s constructed by resolving flat
orbifolds, the torsion-free \gtstr s are homotopic to twisted connected sum
\gtstr s, and thus have $\nu = 24$. It is proved in \cite{jn2}
that in some cases there is even a connecting path of torsion-free \gtstr s,
but that is irrelevant for the calculation of $\nu$.

We have no general technique for computing $\nu$ of orbifold resolution
\gtmfd s. We note, however, that a small number of examples have
$b_2(M) + b_3(M)$ even, \eg \cite[\S 12.8.4]{joyce00}. Those \gtmfd s 
have $\chi_\Q(M)$---and hence $\nu$---odd.

\section{The $h$-principle for coclosed \gtstrsoft s}
\label{sec:hp}

We now prove Theorem \ref{thm:hp}, that coclosed \gtstr s satisfy the
h-principle. We first set up some notation, continuing from
\S \ref{subsec:strs}.

\subsection{Positive 4-forms}

For a vector space $V$ of dimension $7$, let $\Lambda^3_+V^*$ and
$\Lambda^4_+V^*$ denote the space of forms equivalent to $\varphi_0$ (as
defined in \eqref{g2formeq}) and $*\varphi_0$ respectively. These are
\emph{open} subsets of the spaces of forms. Any
$\varphi \in \Lambda^3_+V^*$ defines a \gtstr, and thus an inner product and
orientation, and a Hodge star operator. This gives a non-linear map
$\Lambda^3_+V^* \to \Lambda^4_+V^*, \varphi \mapsto *\varphi$, which is 2-to-1.
The stabiliser of a $\sigma \in \Lambda^4_+V^*$ is isomorphic to
$G_2 \times \{\pm 1\}$,
so $\sigma$ \emph{together with} a choice of orientation
on $V$ determines a \gtstr{} \cite[\S 2.8.3]{bryant03}.

We say that a \gtstr{} on a 7-manifold $M$, defined by a positive 3-form
$\varphi \in \Sec \Lambda^3_+(M)$, is coclosed if the associated 4-form
$\sigma = *\varphi \in \Sec \Lambda^4_+(M)$ is closed. The set of coclosed
\gtstr s on an oriented manifold $M$ is therefore the same as the space of
closed positive 4-forms $\Clo \Lambda^4_+(M) \subset \Sec \Lambda^4_+(M)$.
(Each section induces a spin structure, and the space $\GTC(M)$ appearing in
the statement of Theorem \ref{thm:hp} is a subset of $\Clo \Lambda^4_+(M)$
compatible with a fixed spin structure on $M$.)

\subsection{Microextension}

It is generally easier to prove h-principles for partial differential relations
on open manifolds than on closed manifolds. The Hirsch microextension trick is
the strategy to prove h-principles on closed manifolds by reducing the problem
to an h-principle on an open manifold of higher dimension.

In order to apply the microextension trick, we consider 4-forms on 8-manifolds
such that the restriction to every hypersurface is a positive 4-form. The key
point that makes the argument work is that not only is the set of such forms
open, but moreover any positive 4-form from a hyper\-surface can be extended
this way. This is the feature that enables us to prove the h-principle for
coclosed \gtstr s on closed manifolds, but not for, say, symplectic structures
or closed \gtstr s.

\begin{defn}
For a vector space $W$ of dimension 8, let
\[ \calr(W) = \{\chi \in \Lambda^4 W^* \,|\, \chi_{|V} \in \Lambda^4_+V^*
\textrm{ for every hyperplane } V \subset W \} . \]
\end{defn}

If $W = V \oplus \bbr$ and $\varphi \in \Lambda^3_+V^*$ then
the invariance of $\psi = dt \wedge \varphi + *\varphi$ under $Spin(7)$
(\cf \eqref{eq:product_sp}), which acts transitively on the hyperplanes,
shows that $\psi \in \calr(W)$.

\begin{lem}
$\calr(W)$ is open in $\Lambda^4 W^*$.
\label{lem:r_open}
\end{lem}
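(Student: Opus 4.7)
The plan is to exploit the compactness of the Grassmannian $Gr(7, W)$ of hyperplanes in $W$, together with the fact that positivity of a $4$-form on a $7$-dimensional vector space is itself an open condition.

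Concretely, let $\tau \to Gr(7,W)$ be the tautological rank $7$ bundle, and form the vector bundle $\Lambda^4 \tau^* \to Gr(7,W)$. The subset
\[
U \;=\; \{(V, \beta) \,:\, V \in Gr(7,W),\ \beta \in \Lambda^4_+ V^*\} \;\subset\; \Lambda^4 \tau^*
\]
is open in the total space, because $GL(W)$ acts transitively on $Gr(7,W)$, the fibre $\Lambda^4_+ V^* \subset \Lambda^4 V^*$ is open in a single fibre (by the definition of a positive $4$-form as an open $GL$-orbit), and one can transport this openness along local trivialisations. The restriction map
\[
R \,\colon\, \Lambda^4 W^* \times Gr(7,W) \;\longrightarrow\; \Lambda^4 \tau^*, \qquad (\chi, V) \;\longmapsto\; \bigl(V, \chi_{|V}\bigr),
\]
is continuous, so $\widetilde U := R^{-1}(U)$ is open in $\Lambda^4 W^* \times Gr(7,W)$.

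By definition $\chi \in \calr(W)$ iff $\{\chi\} \times Gr(7,W) \subset \widetilde U$. Since $Gr(7,W)$ is compact, the tube lemma gives an open neighbourhood $\co$ of $\chi$ in $\Lambda^4 W^*$ with $\co \times Gr(7,W) \subset \widetilde U$, and hence $\co \subset \calr(W)$. This shows $\calr(W)$ is open.

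The only mild obstacle is verifying that $U$ is open in $\Lambda^4 \tau^*$ (as opposed to only fibrewise open); this is handled by choosing a local trivialisation of $\tau$ over a neighbourhood of any $V \in Gr(7,W)$, under which positivity becomes a fixed open condition on $\Lambda^4(\bbr^7)^*$, hence pulls back to an open set in the total space. The rest is a standard compactness argument.
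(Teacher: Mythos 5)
Your proof is correct and follows essentially the same route as the paper: both use local trivialisations of the tautological bundle over the compact Grassmannian of hyperplanes to see that positivity of $\chi_{|V}$ is an open condition jointly in $(\chi, V)$, and then conclude by compactness (your tube lemma is just the paper's finite-subcover step packaged differently). No gaps.
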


\begin{proof}
Let $G \cong \bbr P^7$ denote the Grassmannian of hyperplanes in $W$, and
$\pi : \calv \to G$ the tautological bundle. If
$f : \pi^{-1}(U) \to U \times \bbr^7$ is a local trivialisation,
then $\Lambda^4 W^* \times U \to \Lambda^4(\bbr^7)^*$,
$(\chi, V) \mapsto f_{V*}(\chi_{|V})$ is continuous, so the pre-image of
$\Lambda^4_+(\bbr^7)^*$ is open. Hence if $\chi \in \calr(W)$ then for each
$V \in G$ there are open neighbourhoods $B_V \subset \Lambda^4W^*$ of $\chi$
and $C_V \subset G$ of $V$ such that $\chi'_{|V'} \in \Lambda^4_+{V'}^*$ for
each $\chi' \in B_V$ and $V' \in C_V$. Since $G$ is compact it can be covered
by $C_{V_1}, \ldots, C_{V_k}$ for finitely many $V_1, \ldots, V_k \in G$. Then
$B_{V_1} \cap \cdots \cap B_{V_k}$ is an open neighbourhood of $\chi$ in
$\Lambda^4W^*$ and contained in~$\calr(W)$.
\end{proof}

For an 8-manifold $N$, let $\calr(N) \subset \Lambda^4(N)$ be the subbundle
with fibres $\calr(T_xN) \subset \Lambda^4T^*_xN$.
Let $\Clo \calr(N) \subset \Sec \calr(N)$ denote the subspace of closed
4-forms, and $\Clo_a \calr(N)$ the subspace of forms representing a fixed
cohomology class $a \in H^4_{dR}(N)$.
Because the subbundle $\calr(N) \subset \Lambda^4(N)$ is open and invariant
under the natural action of $\Diff(N)$, 
\cite[Theorem 10.2.1]{eliashberg02} immediately implies that
$\Clo_a \calr(N) \into \Sec \calr(N)$ is a homotopy equivalence
if $N$ is an open manifold.

\subsection{The proof of Theorem \ref{thm:hp}}

We prove the following stronger version of Theorem \ref{thm:hp}.

\begin{thm}
Let $I^k \to \Sec \Lambda^4_+(M), \; s \mapsto \sigma_s$ and
$I^k \to H^4_{dR}(M), \; s \mapsto a_s$ be families such that
$\sigma_s \in \Clo_{a_s} \Lambda^4_+(M)$ 
for all $s \in \partial I^k$. Then the family $\sigma_s$ is
homotopic in $\Sec \Lambda^4_+(M)$, relative to~$\partial I^k$, to a family
$\sigma'_s$ such that $\sigma'_s \in \Clo_{a_s} \Lambda^4_+(M)$
for all $s \in I^k$.

In particular
\begin{itemize}[labelindent=\parindent]
\item $\Clo \Lambda^4_+(M) \into \Sec \Lambda^4_+(M)$ is a homotopy
equivalence;
\item $\Clo_a \Lambda^4_+(M) \into \Sec \Lambda^4_+(M)$ is a homotopy
equivalence for each fixed $a \in H^4_{dR}(M)$.
\end{itemize} 
\end{thm}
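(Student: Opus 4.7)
The strategy is the Hirsch microextension trick, which reduces the parametric h-principle on the closed 7-manifold $M$ to the one for the open relation $\calr(N)$ on the \emph{open} 8-manifold $N := M \times (-\epsilon, \epsilon)$. Write $\pi : N \to M$ for the projection, $\iota : M \hookrightarrow N$ for the inclusion at $t = 0$, and define the continuous, $\Diff(M)$-equivariant microextension
\[ \mu : \Sec\Lambda^4_+(M) \lra \Sec\calr(N), \qquad \mu(\sigma) := dt \wedge *_\sigma^{-1}\sigma + \pi^*\sigma . \]
That $\mu(\sigma) \in \Sec\calr(N)$ is the remark following the definition of $\calr$: $\psi = dt \wedge \varphi + *\varphi$ is $Spin(7)$-invariant, hence restricts positively to every hyperplane of $V \oplus \R$. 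Note $\iota^*\mu(\sigma) = \sigma$.

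Given the family data $(\sigma_s, a_s)$, set $\tilde\sigma_s := \mu(\sigma_s) \in \Sec\calr(N)$ and $\tilde a_s := \pi^*a_s \in H^4_{dR}(N)$. For $s \in \partial I^k$, where $\sigma_s$ is closed with class $a_s$, I would replace $\tilde\sigma_s$ by the explicit closed correction
\[ \tau_s := \sigma_s + d(t \varphi_s) = \mu(\sigma_s) + t\, d\varphi_s, \qquad \varphi_s := *_{\sigma_s}^{-1}\sigma_s. \]
This $\tau_s$ is closed on $N$, restricts to $\sigma_s$ at $t = 0$, and represents $\tilde a_s$; after shrinking $\epsilon$ if necessary (uniformly over the compact $\partial I^k$), openness of $\calr$ (Lemma \ref{lem:r_open}) guarantees $\tau_s \in \calr(N)$. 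The linear path $\mu(\sigma_s) + \lambda t\, d\varphi_s$, $\lambda \in [0,1]$, then runs from $\mu(\sigma_s)$ to $\tau_s$ inside $\Sec\calr(N)$. A collar interpolation in $I^k$ combines these into a modified family, still called $\tilde\sigma_s \in \Sec\calr(N)$, that agrees with $\mu(\sigma_s)$ in the interior and lies in $\Clo_{\tilde a_s}\calr(N)$ on $\partial I^k$.

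Now invoke the parametric, relative form of the homotopy equivalence $\Clo_{\tilde a}\calr(N) \hookrightarrow \Sec\calr(N)$, a direct consequence of \cite[Theorem 10.2.1]{eliashberg02} applied to the open, $\Diff(N)$-invariant, order-zero relation $\calr(N)$ on the \emph{open} manifold $N$. This yields a homotopy in $\Sec\calr(N)$, rel $\partial I^k$, from $\tilde\sigma_s$ to a family $\tilde\sigma'_s \in \Clo_{\tilde a_s}\calr(N)$. Pulling back by $\iota$ produces $\sigma'_s := \iota^*\tilde\sigma'_s$: this is positive by the defining property of $\calr$, closed because $d$ commutes with $\iota^*$, and has class $\iota^*\tilde a_s = (\pi \circ \iota)^* a_s = a_s$. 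Restricting the ambient homotopy to $M \times \{0\}$ delivers the required deformation of $\sigma_s$ to $\sigma'_s$ in $\Sec\Lambda^4_+(M)$ relative to $\partial I^k$.

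The main obstacle is precisely the boundary adjustment, necessary because $\mu(\sigma)$ fails to be closed when $\sigma$ is (the condition $d\sigma = 0$ only forces $\varphi = *^{-1}\sigma$ to be coclosed, not closed); the explicit correction $\tau_s = \sigma_s + d(t\varphi_s)$ exhibits a canonical closed extension arbitrarily close to $\mu(\sigma_s)$ and resolves this cleanly. The two bullet-point conclusions then follow immediately from the parametric statement: the first by letting $a_s$ vary freely and running $k = 0, 1, 2, \ldots$ to test $\pi_k$-surjectivity and injectivity, and the second by restricting to the constant family $a_s \equiv a$.
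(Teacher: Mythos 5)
Your proof is correct and follows essentially the same microextension argument as the paper: extend by the exact correction $\sigma_s + d(t\,\varphi_s)$ (the paper applies this correction for all $s \in I^k$ at once, which removes the need for your collar interpolation), shrink $\epsilon$ using openness of $\calr$ and compactness, apply Eliashberg--Mishachev Theorem 10.2.1 on $N = M \times (-\epsilon,\epsilon)$, and restrict the resulting homotopy to $M \times \{0\}$. The one point you gloss over is that for genuinely varying $a_s$ the cited theorem is stated for a fixed cohomology class, so one should add (as the paper does) that the proof of the underlying Proposition 4.7.4 of Eliashberg--Mishachev goes through when the class depends on $s$.
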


\begin{proof}
Identify $\sigma_s$ with its pull-back to $M \times \bbr$, and
let $\chi_s = \sigma_s + dt \wedge *\sigma_s - td(*\sigma_s) \in
\Sec \Lambda^4(M \times \bbr)$. Then there is $\epsilon > 0$ such that $\chi_s$
takes values in $\calr$ over $N := M \times (-\epsilon, \epsilon)$ for all
$s \in I^k$, and $\chi_s \in \Clo_{a_s} \calr(N)$ for $s \in \partial I^k$.
If $a_s \equiv a$ is constant in $s$ then it follows immediately from
\cite[Theorem 10.2.1]{eliashberg02} that the family $\chi_s$ is homotopic
in $\Sec \calr(N)$, relative to~$\partial I^k$, to a family
$\chi'_s \in \Clo_a \calr(N)$. If we set $\sigma'_s = \chi'_{s|M}$ then
$\sigma'_s \in \Clo_a \Lambda^4_+(M)$ for all $s \in I^k$, and the
restriction to $M$ of the homotopy from $\chi$ to $\chi'$ gives a homotopy
from $\sigma$ to $\sigma'$ in $\Sec \Lambda^4_+(M)$.

The proof of \cite[Theorem 10.2.1]{eliashberg02} builds on
\cite[Proposition 4.7.4]{eliashberg02}, which is stated for the case when
$a_s$ is constant. However, the proof still works if $a_s$ is allowed to depend
on $s$ (\cf \cite[Exercise in \S 10.2]{eliashberg02}).
\end{proof}

\section{The action of spin diffeomorphisms on $\GTH(M)$}
 \label{sec:diffeomorphisms}
Let $(M, \varphi) $ be a closed connected spin $7$-manifold with
\gtstr{}.  In this section we investigate the action
 of the group of spin diffeomorphisms of $M$ on the set of homotopy 
 classes of \gtstr s on $M$:
 \[ \GTH(M) \times \DiffSpin(M) \to \GTH(M), \quad ([\varphi], f) \mapsto [f^*\varphi] .\]
The quotient is the set $\GTD(M)$ of deformation classes of \gtstr s.
To determine the action for a specific spin diffeomorphism $f \colon M \cong M$
amounts to computing the difference class
 $ D(\varphi, f^*\varphi) \in \Z. $
The existence of the $\nu$-invariant ensures that $D(\varphi, f^*\varphi) = 24 k$ 
for some integer $k$.  In this section we relate the possible values of $k$
to the topology of $M$ and in particular $p_M \in H^4(M)$. At the end we
provide  the general definition of the $\xi$-invariant.

\subsection{The spin characteristic class $p_M$}
\label{subsec:p_1/2}

Recall that the classifying space $BSpin$ is $3$-connected and
$\pi_4(BSpin) \cong \Z$.
It follows that $H^4(BSpin) \cong \Z$ is infinite cyclic.  A generator is
denoted $\pm \frac{p_1}{2}$ and the notation is justified since for the
canonical map $\pi \colon BSpin \to BSO$ we have $\pi^*p_1 = 2 \frac{p_1}{2}$
where $p_1$ is the first Pontrjagin class.
Given a spin manifold $X$ we write
\[ p_{X} : = \frac{p_1}{2}(X) \in H^4(X) .\]
The following lemma is well known to experts.

\begin{lem}[{\cite[Lemma 2.2(i)]{7class}}]
\label{lem:p_1/2}
For a closed spin $7$-manifold $M$, $p_M \in 2 H^4(M)$.
\end{lem}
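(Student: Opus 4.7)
The plan is to show the equivalent assertion that the mod $2$ reduction $\rho \colon H^4(M; \Z) \to H^4(M; \Z/2)$ sends $p_M$ to zero, since the coefficient long exact sequence identifies $\ker \rho = 2H^4(M)$. The two key ingredients are the universal identity $\rho(p_1/2) = w_4$ in $H^4(BSpin; \Z/2)$ and the vanishing of $w_4(TM)$ on every closed spin $7$-manifold.

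For the universal identity, note that since $BSpin$ is $3$-connected with $\pi_4(BSpin) \cong \Z$, Hurewicz together with universal coefficients give $H^4(BSpin; \Z/2) \cong \Z/2$. This group must be generated by $w_4$, for $w_4$ is non-zero on the rank-$4$ spin bundle $\gamma \to S^4$ realising a generator of $\pi_4(BSpin)$: taking $\gamma$ to be the underlying real bundle of the $SU(2)$-bundle with $c_2(\gamma) = 1$ gives $w_4(\gamma) = c_2(\gamma) \bmod 2 \neq 0$. On that same bundle $p_1(\gamma) = -2c_2(\gamma) = -2$, so $\langle p_1(\gamma)/2, [S^4] \rangle = -1$ is odd, showing $\rho(p_1/2) \neq 0$ in $H^4(BSpin; \Z/2)$ and hence $\rho(p_1/2) = w_4$.

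For the vanishing of $w_4(TM)$, apply Wu's formula on the closed $7$-manifold $M$. The Wu classes $v_i \in H^i(M; \Z/2)$ satisfy $v_i = 0$ whenever $2i > 7$, in particular $v_j = 0$ for $j \geq 4$, and
\[
w_4(TM) \;=\; \mathrm{Sq}^4(1) + \mathrm{Sq}^3 v_1 + \mathrm{Sq}^2 v_2 + \mathrm{Sq}^1 v_3.
\]
Since $M$ is spin, $w_1 = w_2 = 0$ forces $v_1 = v_2 = 0$, and then $v_3 = w_3 = \mathrm{Sq}^1 w_2 = 0$ using the Wu relation $\mathrm{Sq}^1 w_2 = w_1 w_2 + w_3$. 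Combined with $\mathrm{Sq}^4(1) = 0$, every summand on the right vanishes, so $w_4(TM) = 0$.

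Combining the two ingredients gives $\rho(p_M) = w_4(TM) = 0$, which places $p_M$ in $2H^4(M)$ as claimed. The only non-routine step is the universal identification $\rho(p_1/2) = w_4$ on $BSpin$; after that the vanishing of $w_4$ on a closed spin $7$-manifold is a direct Wu-formula computation.
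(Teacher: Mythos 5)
Your proof is correct. Note that the paper does not prove this lemma at all: it records it as ``well known to experts'' and cites \cite[Lemma 2.2(i)]{7class}, so you have in effect supplied the omitted argument rather than diverged from it, and your route --- the Bockstein identification $\ker\rho = 2H^4(M)$, the universal identity $\rho(p_1/2) = w_4$ in $H^4(BSpin;\Z/2) \cong \Z/2$, and the Wu-formula computation showing $w_4(TM)=0$ on a closed spin $7$-manifold --- is the standard proof of the cited fact. Two minor remarks: you never need $\gamma$ to generate $\pi_4(BSpin)$, since exhibiting any spin bundle with $w_4 \neq 0$ and with $\langle p_1/2,[S^4]\rangle$ odd already pins down both classes as the unique nonzero element of $H^4(BSpin;\Z/2)$; and the $v_4$ term you omitted from the Wu expansion of $w_4$ is indeed justified by your observation that $v_j = 0$ once $2j > 7$, so the computation is complete as written.
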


For later use, we recall from the introduction that $d_\pi$ denotes the greatest divisor of $p_M$
modulo torsion, while
$\mdiv :=
{\rm Max} \{ s \,|\,s, \mpl \in \Z, \; \mpl^2 s \text{~divides~} \mpl p_M\}$;
we set $d_\pi = \mdiv = 0$ if $p_M$ is torsion. Both are even by
Lemma \ref{lem:p_1/2}.

\begin{ex}
If $H^4(M) \cong \Z \oplus \Z_4$ and $p_M \mapsto (8,2)$ then $d_\pi = 8$ while
$\mdiv = 4$.
\end{ex}
%


\subsection{Translations of \gtstrsoft s and mapping tori}
\label{subsec:mapping_tori}
Given $(M, \varphi)$ and a spin diffeo\-morphism $f \colon M \cong M$, we wish
to calculate the difference element $D(\varphi, f^*\varphi) \in \Z$. 
Note that (given $\varphi$) the homotopy class $[f^*\varphi] \in \GTH(M)$
depends only on the pseudo-isotopy class of $f$. For suppose that $F$ is
pseudo-isotopy between diffeomorphisms $f_0$ and $f_1$, \ie 
a diffeomorphism $F \colon M \times I \cong M \times I$ such that
$F|_{M \times \{ i \} } = f_i$ for $i = 0, 1$. Then contracting 
the pull-back $F^*\psi$ of the product $Spin(7)$-structure
$\psi = dt \wedge \varphi + * \varphi$ with $\contra{t}$ and restricting
to the slices $M \times \{t\}$ defines a homotopy between $f_0^*\varphi$ and
$f_1^*\varphi$.
On the other hand, Proposition \ref{prop:Dandp} shows that
$D(\varphi, f^*\varphi)$ does not depend upon the \gtstr{}~$\varphi$.
Hence we obtain a well-defined function
\[ D_M \colon \wt \pi_0 \DiffSpin(M) \to \Z, \quad [f] \mapsto D_M(f) : = D(\varphi,
f^*\varphi),\] 
where $\wt \pi_0 \DiffSpin(M)$ denotes the group of
pseudo-isotopy classes of spin diffeomorphisms of $M$.

The integer $D_M(f)$ measures the translation action of $f$ on the set of
homotopy classes of \gtstr s.
Next we show how to calculate $D_M(f)$ using the mapping torus of $f$:
\[ T_f : = (M \times [0, 1])/ (x,0) \sim (f(x),1) . \]
\noindent
Since $f$ is a spin diffeomorphism the closed $8$-manifold $T_f$ admits a spin structure.  We choose
a spin structure and let $T_f$ to denote the corresponding $8$-dimensional spin manifold: no confusion shall arise since we are interested only in the characteristic number
\[  p^2(f) : = \langle p_{T_f}^2, [T_f] \rangle \in \Z, \]
which depends only on the oriented diffeomorphism type of $T_f$  since
$2p_{T_f} = p_1(T_f)$ and $H^8(T_f) \cong \Z$ (in fact $p_{T_f}$ is independent
of the choice of spin structure by \cite[p.\,170]{cadek08}).
Therefore $p^2(f)$
is an invariant of the pseudo-isotopy class of $f$ and we define the function
\[ p^2 \colon \wt \pi_0 \DiffSpin(M) \to \Z, \quad [f] \mapsto p^2(f). \]
%
%
The following proposition proves Proposition \ref{prop:Dshift} and shows how the
mapping torus $T_f$ can be used to compute the difference class $D(\varphi, f^*\varphi)$.

\begin{prop}
The function $D_M \colon  \wt \pi_0 \DiffSpin(M) \to \Z $ is a homomorphism
given by 
\label{prop:Dandp}
\[  D(\varphi, f^*\varphi)  \; = \; \frac{3 \cdot p^2(f)}{28}  \; = \;
24 \ahat(T_f).  \]
\end{prop}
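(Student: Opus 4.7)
The first step is to show that $D_M$ is a well-defined homomorphism; this hinges on the naturality of $D$ under spin diffeomorphisms. Namely, if $g \in \DiffSpin(M)$ and $\alpha,\beta$ are \gtstr s on $M$, any transverse positive spinor on $M \times [0,1]$ realising $D(\alpha,\beta)$ as an intersection number pulls back via $g \times \Id$ to a transverse positive spinor computing $D(g^*\alpha, g^*\beta)$ with the same intersection count, whence $D(g^*\alpha, g^*\beta) = D(\alpha, \beta)$. Combining this naturality with the affine identity \eqref{eq:d_affine}, I would obtain $D(\varphi', f^*\varphi') = D(\varphi, f^*\varphi)$ for any second \gtstr{} $\varphi'$, so $D_M(f)$ is independent of $\varphi$. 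Splitting $D(\varphi, (fg)^*\varphi) = D(\varphi, g^*\varphi) + D(g^*\varphi, g^*f^*\varphi)$ and applying naturality to the second summand yields the homomorphism property $D_M(fg) = D_M(g) + D_M(f)$.

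Next, I would realise $D(\varphi, f^*\varphi)$ via an explicit $Spin(7)$-coboundary. Take $W = M \times [0,1]$ with the product \spstr{} $\psi = dt \wedge \varphi + *\varphi$, and boundary embeddings $g_1(x) = (x,1)$ and $g_0(x) = (f(x),0)$. Contracting $\psi$ with the outward normal vector fields shows $g_1^*(\iota_n \psi) = \varphi$ and $g_0^*(\iota_n \psi) = -f^*\varphi$, so $(W,\psi,g_1,g_0)$ is a $Spin(7)$-bordism from $(M,\varphi)$ to $(M, f^*\varphi)$. Closing up per \eqref{eq:close} identifies $(x,1) \sim (f(x), 0)$, producing the mapping torus $T_f$ (after the orientation-matching flip $t \mapsto 1-t$). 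Lemma \ref{lem:d_speuler} then gives $D(\varphi, f^*\varphi) = e_+(T_f)$ once the orientations of $\ow$ and $T_f$ are reconciled.

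Finally, to evaluate $e_+(T_f)$ I would invoke Corollary \ref{cor:ahat} together with the vanishing of both Euler characteristic and signature of a mapping torus. Multiplicativity of $\chi$ over fiber bundles gives $\chi(T_f) = \chi(M)\chi(S^1) = 0$, and $\sign(T_f) = 0$ follows either from the Chern--Hirzebruch--Serre formula with fiber of odd dimension or directly from the hyperbolic structure on $H^4(T_f;\Q)$ induced by the Wang sequence. Corollary \ref{cor:ahat} then reduces to $e_+(T_f) = 24\ahat(T_f)$, giving the second equality of the proposition, and the identity $24\ahat(T_f) = \frac{3p^2(f)}{28}$ follows immediately from \eqref{eq:ek_ahat} specialised to $\sign(T_f) = 0$. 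The main technical hurdle is the orientation bookkeeping in step two: identifying $\ow$ as oriented manifold with the standard oriented $T_f$ requires the interval-reflection $t \mapsto 1-t$, and this sign must be tracked to land on $+24\ahat(T_f)$ rather than its negative.
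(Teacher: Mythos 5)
Your argument is essentially the paper's: the homomorphism property via naturality of $D$ under pull-back plus the affine relation, then Lemma \ref{lem:d_speuler} applied to the product \spstr{} on $M\times[0,1]$ closed up to a mapping torus, and finally $\chi(T_f)=\sign(T_f)=0$ fed into Corollary \ref{cor:ahat} and \eqref{eq:ek_ahat}. The one point where you diverge is the direction of the bordism, and that is exactly where your ``main technical hurdle'' comes from: with your embeddings $g_1(x)=(x,1)$, $g_0(x)=(f(x),0)$ the closed-up manifold of \eqref{eq:close} is $M\times[0,1]/(x,1)\sim(f(x),0)$, which is $T_{f^{-1}}$; the flip $t\mapsto 1-t$ identifying it with $T_f$ is orientation-\emph{reversing}, so Lemma \ref{lem:d_speuler} gives $D(\varphi,f^*\varphi)=-e_+(\ow)$ with $\ow\cong -T_f$, and you still owe the reader the computation that this equals $+e_+(T_f)$. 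It does: orientation reversal swaps the half-spinor bundles, so $e_+(-X)=-e_-(X)$, and since $e_+-e_-=\chi$ by \eqref{eq:cliff} and $\chi(T_f)=0$, one gets $-e_+(\ow)=e_-(T_f)=e_+(T_f)$ — but this step is not written in your proposal, and since the sign is the content of the formula it cannot be waved at. The cleaner fix is the paper's: take the same product \spstr{} but regard $(M\times[0,1],\psi)$ as a bordism from $(M,f^*\varphi)$ to $(M,\varphi)$, i.e.\ use the embeddings $(f,1)$ and $(\Id,0)$; then \eqref{eq:close} identifies $(x,0)\sim(f(x),1)$, the closed-up manifold is literally $T_f$ with its product orientation, and $D(\varphi,f^*\varphi)=-D(f^*\varphi,\varphi)=+e_+(T_f)$ with no orientation bookkeeping at all. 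One further small caution: the Chern--Hirzebruch--Serre multiplicativity argument requires $\pi_1$ of the base to act trivially on the fibre cohomology, which fails for a general mapping torus, so you should rely on your Wang-sequence/Lagrangian argument (which is correct) for $\sign(T_f)=0$; the Euler characteristic vanishes simply because $\chi(M)=0$ in odd dimensions.
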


\begin{proof}
From the definition of $D(\varphi, \varphi')$ in \S \ref{sec:invariant} it is clear that
$D(f^*\varphi, f^*\varphi') = D(\varphi, \varphi')$ for any spin diffeomorphism $f$
and any pair of \gtstr s $\varphi$ and $\varphi'$ on $M$.  Now for two spin
diffeomorphisms $f_0, f_1 \colon M \cong M$, 
the affine property \eqref{eq:d_affine} of $D$ gives
\[ D(\varphi, (f_1 \circ f_0)^* \varphi)
\; = \; D(\varphi, f_0^*\varphi) + D(f_0^*\varphi, f_0^*(f_1^*\varphi)) 
\; = \; D(\varphi, f_0^*\varphi) + D(\varphi, f_1^*\varphi). \]
This shows that $D_M$ is a homomorphism.

Turning to the mapping torus, we can use Lemma \ref{lem:d_speuler}
to compute $D(\varphi, f^*\varphi)$ by treating the product $M \times [0,1]$ together with
the embeddings $(\Id, 0)$ and $(f, 1) : M \into M \times [0,1]$ as a
$Spin(7)$-bordism $W_f$ from $(M, f^*\varphi)$ to $(M, \varphi)$. 
Clearly the manifold $\ow_f$ obtained by closing up the bordism as in
\eqref{eq:close} is nothing other than the mapping torus $T_f$,
so \eqref{eq:d_speuler} gives 
\[ D(f^*\varphi, \varphi)  \; = \; - e_+(\ow_f)  \; = \; -e_+(T_f) .\]
By Proposition \ref{prop:e+}, $e_+(T_f) = \frac{1}{16}(4p_{T_f}^2 - 4p_2 + 8e)$ and using the signature theorem to eliminate $p_2$ from this equation we have
\[ D(\varphi, f^*\varphi)  \; = \; e_+(T_f)  \; = \;
\frac{3p_{T_f}^2}{28} - \frac{45 \sign(T_f)}{28} + \frac{\chi(T_f)}{2} .\]
Since $T_f$ is a mapping torus both $\sign(T_f)$ and $\chi(T_f)$ vanish
which proves the first equality of the proposition.
Now the second equality follows from Corollary \ref{cor:ahat}.
\end{proof}

Since Proposition \ref{prop:Dandp} determines $D_M$ in terms of $p^2$,
the proofs of Theorems \ref{thm:spinauto_LB} and \ref{thm:spinauto_UB}
are completed by quoting the following result. Here $b_M$ denotes the torsion
linking form on $\Tor H^4(M)$.

\begin{thm}[{\cite[Definition 4.4 and Corollary 4.17(iv)]{7class}}]
\label{thm:rdiff}
For any spin 7-manifold $M$, there is an $r \in \{0,1,2\}$ depending only
on $(H^4(M), b_M, p_M)$ such that
\begin{equation}
\label{eq:rdiff}
p^2(\DiffSpin(M)) \subseteq \lcm(224, 2^r\mdiv(M))\Z ,
\end{equation}
with equality if $M$ is 2-connected.
\end{thm}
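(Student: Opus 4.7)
The first reduction is the one already supplied by Proposition \ref{prop:Dandp}: since $p^2(f)=224\,\ahat(T_f)$ and the $\ahat$-genus of any closed spin $8$-manifold is an integer, we automatically have $p^2(f)\in 224\,\Z$ for every $f\in\DiffSpin(M)$. So the substantive content of Theorem \ref{thm:rdiff} is the extra divisibility factor $2^r\mdiv(M)$, together with the realisability statement in the $2$-connected case. My plan is to obtain the extra divisibility by a geometric/cohomological analysis of the mapping torus, and realisability by quoting the diffeomorphism classification of \cite{7class}.

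For the divisibility, I would study $H^*(T_f)$ via the Wang sequence of $M\hookrightarrow T_f\xrightarrow{\pi} S^1$. Rationally this splits as $H^*(T_f;\Q)\cong H^*(S^1;\Q)\otimes H^*(M;\Q)^{f^*}$, but the cup product on the total space is twisted: if $\tilde p\in H^4(T_f)$ is any lift of $p_M\in H^4(M)^{f^*}$, then $\tilde p^2\in H^8(T_f)\cong\Z$ has a secondary piece supported on $[dt]\otimes H^7(M)$ measuring the failure of $f^*$ to preserve a cochain-level refinement of $p_M$. The resulting expression for $\langle p_{T_f}^{\,2},[T_f]\rangle$ takes the form $2\langle p_M\cup \gamma_f,[M]\rangle$, where $\gamma_f\in H^3(M;\Q)^{f^*}/(f^*-\Id)$ is a ``difference class'' canonically attached to $f$. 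Since $p_M$ itself has divisibility $\mdiv(M)$ modulo torsion, the image of the pairing $\langle p_M\cup -,[M]\rangle\colon H^3(M)\to\Z$ lies in $\mdiv(M)\Z$; combined with the factor $2$ in front this gives $p^2(f)\in 2\mdiv(M)\Z$, and combining with Step~1 yields divisibility by $\lcm(224,2\mdiv(M))$.

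To extract the refined factor $2^r$ one must work integrally and track how $f^*$ interacts with the torsion of $H^4(M)$ and the linking form $b_M$. The key point is that $f^*$ must preserve both $p_M$ and $b_M$ (being a spin diffeomorphism), so it lies in a specific subgroup of $\aut(H^4(M))$. The parity of $\gamma_f$ and the parity of its pairing with $p_M$ are governed by invariants of this subgroup, and the classification of possible behaviours gives the trichotomy $r\in\{0,1,2\}$ from \cite[Def.\,4.4]{7class}. This is the main obstacle: distinguishing the three cases of $r$ requires a careful algebraic analysis of the automorphism orbits of the triple $(H^4(M),b_M,p_M)$, and showing that the secondary pairing above really does land in the predicted lattice (rather than a finer one) requires an integral refinement of the Leray--Hirsch argument sketched above.

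Finally, for the equality statement when $M$ is $2$-connected, note that $H^3(M)=0$ forces one to compute the secondary pairing differently (via an integral Massey-type product or Atiyah--Patodi--Singer spectral flow), but the upshot is the same bound. The lower bound --- that every element of $\lcm(224,2^r\mdiv(M))\Z$ is realised as $p^2(f)$ for some spin diffeomorphism --- is then obtained by invoking the diffeomorphism classification of closed $2$-connected spin $7$-manifolds, which produces explicit generators $f\in\pi_0\DiffSpin(M)$ (typically connect-summed with generators of $\pi_0\Diff(S^7)$ or twists along embedded $S^3$'s) whose mapping tori achieve the prescribed $\ahat$-values. I would simply quote \cite[Cor.\,4.17(iv)]{7class} for this last step.
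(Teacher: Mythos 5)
Your proposal ultimately defers the two substantive points --- the trichotomy $r\in\{0,1,2\}$ and the realisability for $2$-connected $M$ --- to \cite{7class}, and in that respect it is consistent with the paper, which does not prove Theorem \ref{thm:rdiff} either: the theorem is imported from \cite[Definition 4.4 and Corollary 4.17(iv)]{7class}, and the paper only summarises the ingredients afterwards (the function $P$ on $\aut(H^4(M),b_M,p_M)$, the congruence \eqref{eq:P}, the computation \eqref{eq:imP} of $\im P$, and, for equality, almost-diffeomorphisms acting trivially on $H^4(M)$ with $p^2 = 2\td_\pi n$ plus the fact that $224 \mid p^2$ allows pseudo-isotopy to a genuine diffeomorphism). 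Your step 1 is fine and is exactly \eqref{eq:div2}. But the one step you attempt independently --- the mapping-torus divisibility --- is flawed, and the elementary statement the paper actually proves is Lemma \ref{lem:D_constraints}, namely $p^2(f)\in\lcm(224,\mdiv(M))\Z$, with \emph{no} factor of $2$.

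Concretely, two things go wrong in your step 2. First, $\mdiv(M)$ is not the greatest divisor of $p_M$ modulo torsion; that is $d_\pi$, and $\mdiv(M)$ can be strictly smaller (the paper's example $H^4(M)\cong\Z\oplus\Z_4$, $p_M\mapsto(8,2)$ has $d_\pi=8$ but $\mdiv=4$). It is precisely the auxiliary multiplier $\mpl$ in the definition of $\mdiv$ that the correct elementary argument exploits: Lemma \ref{lem:d-p^2}, applied with $x=\mpl\, p_{T_f}$ and $s=\mpl^2\mdiv(M)$ and with $\Z_s$ coefficients (not rationally), gives $\mdiv(M)\mid p^2(f)$. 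Second, your identity $p^2(f)=2\langle p_M\cup\gamma_f,[M]\rangle$ proves too much: with $\gamma_f$ only a rational class it yields no integral divisibility at all, while if $\gamma_f$ could be taken integral the same argument would give $2d_\pi\mid p^2(f)$, which is false in general --- by \eqref{eq:P} the residue $p^2(f)\mmod 2d_\pi$ equals $P(f^*)$, a generally non-zero quantity built from the torsion linking form $b_M$, and by \eqref{eq:imP} together with the equality statement every value in $2^r\mdiv(M)\Z/2d_\pi\Z$ is realised when $M$ is $2$-connected. So the unconditional ``extra factor of $2$'' you extract is exactly the kind of refinement that the $2$-torsion-sensitive invariant $r$ exists to control and cannot come out of a rational Wang-sequence computation; the torsion of $H^4(M)$ and $b_M$ must enter, and that analysis is the content of \cite{7class}. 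Similarly, the appeal to Massey products or APS spectral flow in the $2$-connected case is a placeholder rather than an argument. Quoting \cite{7class} for all of this is legitimate (the paper does the same), but then the proposal should not present the $2\mdiv(M)$-divisibility as established.
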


The next subsection summarises some ingredients of the proof of this theorem.
However, before we do so let us prove an elementary special case of
\eqref{eq:rdiff} in order
to make the appearance of $\dinfty{M}$ less mysterious.

\begin{lem} \label{lem:D_constraints}
Let $M$ be a closed spin $7$-manifold and $f$ a spin diffeomorphism of $M$.
Then
\begin{equation}
p^2(f) \in \lcm(224, \dinfty{M})  \Z. 
\end{equation}
%
\end{lem}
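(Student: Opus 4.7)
The plan is to establish two separate divisibilities of $p^2(f)$ and then combine them: $224 \mid p^2(f)$, and $d_\pi(M) \mid p^2(f)$. Since $\dinfty{M} = \mdiv(M)$ divides $d_\pi(M)$ (both being divisibility measures of $p_M$ modulo torsion), the conclusion $\lcm(224, \dinfty{M}) \mid p^2(f)$ follows.

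First, applying the Eells--Kuiper formula \eqref{eq:ek_ahat} to the closed spin $8$-manifold $T_f$ gives
\[ p^2(f) - \sign(T_f) = 224 \ahat(T_f). \]
Any mapping torus has $\sign(T_f) = 0$, and $\ahat(T_f) \in \Z$ since $T_f$ is spin, so $p^2(f) = 224 \ahat(T_f) \in 224\Z$.

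Next, I will use the fibration $\pi\colon T_f \to S^1$ with fiber $M$ to express $p^2(f)$ as a cup pairing on $M$. Take a closed representative $\omega \in \Omega^4(T_f)$ of $p_{T_f}$ and decompose $\omega = a + d\theta \wedge b$ using the global horizontal vector field $\partial_\theta$ (with $b = \iota_{\partial_\theta}\omega$ and $a$ the $\partial_\theta$-horizontal part). Since $\dim M = 7$, the term $a \wedge a$ vanishes on each fiber, leaving
\[ \omega \wedge \omega = 2\, d\theta \wedge a \wedge b. \]
Via the Wang exact sequence
\[ H^3(M) \xrightarrow{f^*-\Id} H^3(M) \xrightarrow{\delta} H^4(T_f) \xrightarrow{i^*} H^4(M) \xrightarrow{f^*-\Id} H^4(M), \]
this de Rham identity can be promoted to the cohomological equation
\[ p^2(f) \;=\; 2 \langle p_M \cup c,\, [M]\rangle \]
for a well-defined class $c \in H^3(M;\Z)/(f^*{-}\Id)H^3(M;\Z)$ attached to $p_{T_f}$ by the Wang connecting homomorphism, the well-definedness using that $f$ orientation-preserving makes $f^*$ act trivially on $H^7(M;\Z) \cong \Z$. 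Writing $p_M = d_\pi(M)\, q + t$ with $q \in H^4(M;\Z)$ and $t \in \Tor H^4(M;\Z)$, the cup product $t \cup c$ lies in $\Tor H^7(M;\Z) = 0$, so
\[ p^2(f) \;=\; 2\, d_\pi(M)\, \langle q \cup c,\, [M]\rangle \;\in\; 2\, d_\pi(M)\,\Z . \]
Combined with the $224$-divisibility, this yields $\lcm(224, d_\pi(M)) \mid p^2(f)$, hence the stated conclusion.

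The principal obstacle is to prove the identity $p^2(f) = 2\langle p_M \cup c, [M]\rangle$ rigorously at the \emph{integral} level, since the integral Wang sequence need not split: one must verify that the rational splitting used in the de Rham derivation really yields an integer formula, via a careful analysis of the Serre spectral sequence for $T_f \to S^1$ and the cancellation of torsion contributions afforded by the triviality of $f^*{-}\Id$ on $H^7(M;\Z)$.
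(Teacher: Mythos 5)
Your first step is exactly the paper's: applying \eqref{eq:ek_ahat} to the closed spin mapping torus with $\sign(T_f)=0$ gives $p^2(f)\in 224\Z$. The gap is in the second step. The promotion of the de Rham identity to an \emph{integral} formula $p^2(f)=2\langle p_M\cup c,[M]\rangle$ with $c\in H^3(M;\Z)/(f^*{-}\Id)H^3(M;\Z)$ is not a technical point left to check --- it is false in general, and so is the divisibility $2d_\pi(M)\mid p^2(f)$ you deduce from it. The Wang sequence for $T_f\to S^1$ has no integral splitting, so $p_{T_f}$ has no ``fibre degree $3$ component''; what is true is only that two integral lifts of $p_M$ to $H^4(T_f)$ have squares differing by an element of $2d_\pi\Z$, so $p^2(f)\bmod 2d_\pi$ is a genuine invariant, and by \eqref{eq:P} it equals the torsion-linking-form expression $P(f^*)$, which need not vanish: by \eqref{eq:imP} its possible values fill $2^r\mdiv(M)\Z/2d_\pi(M)\Z$, and by Theorem~\ref{thm:rdiff} equality holds in \eqref{eq:rdiff} when $M$ is $2$-connected. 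Concretely, for a $2$-connected $M$ with $H^4(M)\cong\Z\oplus\Z_3$ and $p_M=(6,1)=2\cdot(3,2)$ (such $M$ exist by the realisation results of \cite{7class}) one has $d_\pi=6$ but $\mdiv(M)=2$ and $r=1$, so Theorem~\ref{thm:rdiff} gives a spin diffeomorphism with $p^2(f)=\lcm(224,4)=224$, which is not divisible by $d_\pi=6$, let alone $2d_\pi$. The torsion information is lost precisely at your step ``$p_M=d_\pi q+t$ and $t\cup c$ is torsion, hence zero'': a class $c$ with the required property exists only rationally, with denominators controlled by $\Tor H^4(M)$, and those denominators are exactly why only divisibility by $\mdiv(M)$ --- not by $d_\pi(M)$ --- survives. (The paper signals this distinction with its example $H^4(M)\cong\Z\oplus\Z_4$, $p_M=(8,2)$, where $d_\pi=8\neq\mdiv=4$.)

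The paper's proof is your Wang-sequence idea carried out at the correct integral level. By definition of $\mdiv$ there is an integer $m$ such that $m^2\mdiv(M)$ divides $mp_M$ \emph{in the whole group} $H^4(M)$, torsion included; then Lemma~\ref{lem:d-p^2}, applied to $x=mp_{T_f}$ with $\Z_s$-coefficients for $s=m^2\mdiv(M)$, uses only that cup products vanish on the image of the Wang connecting map to conclude $s\mid m^2p^2(f)$, i.e.\ $\mdiv(M)\mid p^2(f)$; combined with the $224$-divisibility this is exactly the statement of the lemma. So to repair your argument, drop the integral splitting and the claim $2d_\pi\mid p^2(f)$, multiply through by the auxiliary integer $m$ (or pass to $\Z_s$-coefficients) before pairing, and aim only for divisibility by $\dinfty{M}$.
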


\begin{proof}
First recall \eqref{eq:ek_ahat}: for a closed $8$-dimensional spin manifold $X$,
combining the definitions
\eqref{eq:genera} of the $L$-genus and the $\ahat$-genus gives
\[ p_X^2 - \sigma(X) = 8 \cdot 28 \ahat(X) . \]
Since the mapping torus $T_f$ is a closed \mbox{$8$-dimensional} spin manifold
with $\sigma(T_f) = 0$ we deduce that
\begin{equation} \label{eq:div2}
p_{T_f}^2 \in 8 \cdot 28 \cdot \Z.
\end{equation}
From the definition of $\mdiv$ there is a positive
integer $\mpl$ such that $\mpl^2 \mdiv$ divides $\mpl p_M$.  Applying
Lemma \ref{lem:d-p^2} below with $x = \mpl p_{T_f}$ and $s = \mpl^2 \dinfty{M}$
gives that $\mpl^2\dinfty{M}$ divides $\mpl^2p_{T_f}^2$ and hence
\begin{equation*} 
p^2_{T_f} \in \dinfty{M} \cdot  \Z. \qedhere
\end{equation*}
\end{proof}

\begin{lem} \label{lem:d-p^2}
Let $T_f$ be the mapping torus of $f \colon M \cong M$ and $i \colon M \to T_f$
the inclusion.
If $x \in H^4(T_f)$ and $s \in \Z$ divides $i^*x$ then $s$
divides $x^2 \in H^8(T_f) \cong \Z$.
%
\end{lem}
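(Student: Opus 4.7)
I would prove this by reducing modulo $s$ and showing that $\bar x^2 = 0$ in $H^8(T_f;\Z/s)$, where $\bar x$ denotes the mod $s$ reduction of $x$. Since $H^8(T_f;\Z)\cong\Z$ and the natural map $H^8(T_f;\Z)/s \to H^8(T_f;\Z/s)$ is injective, this is equivalent to $s\mid x^2$. The hypothesis $s\mid i^*x$ says precisely that $i^*\bar x = 0$.

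The central observation is geometric: as a fibre of the projection $T_f \to S^1$, the submanifold $M \subset T_f$ has trivial normal line bundle, with tubular neighbourhood diffeomorphic to $M\times(-\varepsilon,\varepsilon)$. From the long exact sequence of the pair $(T_f,M)$ with $\Z/s$-coefficients, $\ker(i^*)$ is the image of $j\colon H^4(T_f,M;\Z/s)\to H^4(T_f;\Z/s)$, so I can write $\bar x = j(\tilde x)$ for some $\tilde x\in H^4(T_f,M;\Z/s)$.

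The crux is to show that $\textup{image}(j)$ is a square-zero ideal in $H^*(T_f;\Z/s)$. Applying excision to the tubular neighbourhood together with the Thom isomorphism for the trivial normal line bundle identifies $H^*(T_f,M;\Z/s)$ with a degree-shifted copy of $H^*(M;\Z/s)$, with all products computable via cup product with the Thom class. Because the normal bundle is trivial its Euler class vanishes, forcing the square of the Thom class to be zero. Consequently the relative cup-product pairing on $H^*(T_f,M;\Z/s)$ is identically zero, and hence $j(\tilde a)\cup j(\tilde b) = j(\tilde a\cup j(\tilde b)) = 0$ for any relative lifts (using the standard $H^*(T_f)$-module structure on $H^*(T_f,M)$).

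Applying this with $\tilde a = \tilde b = \tilde x$ yields $\bar x^2 = 0$, proving $s\mid x^2$. The only step requiring care is the book-keeping around the identification of the cup product structure under excision and Thom isomorphism; the substantive content is just the vanishing of the Euler class of a trivial line bundle, together with the triviality of the normal bundle of a fibre in a mapping torus.
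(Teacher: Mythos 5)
Your argument is correct, and its skeleton is the same as the paper's: reduce mod $s$, observe that $\rho_s(x)$ lies in the subgroup of classes killed by restriction to the fibre, show that all cup products of such classes vanish in $H^8(T_f;\Z_s)$, and conclude using that $H^8(T_f)\cong\Z$ is torsion-free. The difference is in how that subgroup is presented and how the vanishing is justified. The paper uses the Wang exact sequence of the fibration $T_f\to S^1$ to write $\rho_s(x)$ as $\partial$ of a class on $M$, and then simply asserts that the cup product vanishes on $\im(\partial)$; you instead use the long exact sequence of the pair $(T_f,M)$ and prove the vanishing geometrically, from the fact that the fibre has trivial normal bundle, so that under excision and the Thom isomorphism the relevant relative products are multiples of the square of the Thom class, i.e.\ of the (zero) Euler class. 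In effect your route supplies a proof of exactly the step the paper leaves unjustified (which one can also see from multiplicativity of the Leray filtration for a fibration over $S^1$, or by pushing the class off to a disjoint parallel fibre). The one point of bookkeeping you should make explicit: excision plus the Thom isomorphism naturally computes $H^*(T_f,\,T_f\setminus M;\Z_s)$, not literally $H^*(T_f,M;\Z_s)$; to transfer the product vanishing you should either note that $T_f\setminus M\cong M\times(0,1)$ deformation retracts onto a parallel fibre whose inclusion is homotopic to $i$, so that the image of $H^4(T_f,\,T_f\setminus M;\Z_s)$ in $H^4(T_f;\Z_s)$ is again exactly $\ker(i^*)$ and you may run the whole argument with that pair, or else observe that $H^*(T_f,M;\Z_s)\cong\widetilde H^*(T_f/M;\Z_s)$ with $T_f/M$ a suspension, so its relative ring structure is trivial. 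With that caveat your module-structure identity $j(\tilde a)\cup j(\tilde b)=j(\tilde a\cup j(\tilde b))$ closes the argument; what your version buys is a self-contained geometric proof, at the cost of being a bit longer than the paper's two-line Wang-sequence argument.
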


\begin{proof}
Consider the following fragment of the long exact cohomology sequence for the mapping torus $T_f$ with $\Z_s$ coefficients:
\begin{equation*}
H^3(M; \Z_s) \xrightarrow{\Id - f^*} H^3(M; \Z_s) \xrightarrow{~\del~} H^4(T_f; \Z_s) \xrightarrow{~i^*~} 
H^4(M; \Z_s) \xrightarrow{\Id - f^*} H^4(M; \Z_s). 
\end{equation*}
For a space $X$, let $\rho_s \colon H^*(X) \to H^*(X; \Z_s)$ denote reduction mod~$s$.  By assumption $i^*\rho_s(x) = 0 $ and so $\rho_s(x)$ lies in the image of $\del$.  But the cup-product 
\[ H^4(T_f; \Z_s) \times H^4(T_f; \Z_s) \to \Z_s \]
vanishes on $\im(\del)$.  Hence $\rho_s(x)^2 = \rho_s(x^2) = 0 \in H^8(T_f; \Z_s)$ and so $s$ divides $x^2$.
%
%
\end{proof}

\subsection{Diffeomorphisms of spin 7-manifolds}

We now summarise the main ideas of the proof of Theorem \ref{thm:rdiff}
from \cite{7class}. We also recall from there the technical notion of a Gauss
refinement and how that detects some aspect of the action of diffeomorphisms
and almost-diffeomorphisms, leading to a generalisation of the
Eells-Kuiper invariant. In the next subsection we use that in the general
definition of the $\xi$-invariant of a \gtstr. 

Let $M$ be a closed spin 7-manifold as usual. We can associate to it the
invariants $p_M$, $b_M$ and $q_M^\circ$, where $b_M$ is the torsion
linking form on $\Tor H^4(M)$, and $q_M^\circ$ is a ``family of quadratic
refinements'' of $b_M$ \cite[\S 2.4]{7class}.
Group isomorphisms $F$ act naturally on these objects by pull-backs, \eg
$F^\#p_M$ is simply $F^{-1}(p_M)$.
For any spin diffeomorphism $f$ of $M$,
the induced action $f^*$ on $H^4(M)$ preserves these invariants, \ie
\[ (f^*)^\#p_M = p_M, \quad (f^*)^\#b_M = b_M, \quad
(f^*)^\#q^\circ_M = q^\circ_M ; \]
in fact, this remains true even if $f$ is merely a homeomorphism
\cite[Theorem 1.2]{7class}. We define a function 
\[ P : \aut(H^4(M), b_M, p_M) \to \Z/2\dpi{M}\Z \]
as follows \cite[(39)]{7class}.
Let
\begin{equation}
\label{eq:divsp}
\divsp := \{ k \in H^4(M) : p_M - d_\pi k \textrm{ is torsion} \},
\end{equation}
pick $k \in \divsp$,
let $t := F(k) - k$, and 
\[ P(F) := d_\pi^2 b_M(t, t) - 2d_\pi \, b_M(p_M {-} d_\pi k, \, t)
\mmod 2d_\pi(M)\Z . \]
Then \cite[Prop 4.16]{7class} states that
\begin{equation}
\label{eq:P}
p^2(f) = P(f^*) \mmod 2d_\pi(M) .
\end{equation}
Meanwhile \cite[(42)]{7class} states that
\begin{equation}
\label{eq:imP}
\im P = 2^r \mdiv(M) \Z/2d_\pi(M) \Z ,
\end{equation}
for some $r(H^4(M), b_M, p_M) \in \{0,1,2\}$,
and $r = 1$ unless $H^4(M)$ has 2-torsion.
Combined with \eqref{eq:div2} this implies \eqref{eq:rdiff}, and hence
Theorem \ref{thm:spinauto_LB}.

Further, if $M$ is 2-connected then \cite[Proposition 3.10]{7class} states that
there exist $f \in \ADiff_{Spin}(M)$ with $f^* = \Id$ on $H^4(M)$
and $p^2(f) = 2\td_\pi n$ for any $n \in \Z$; as in the introduction,
$\td_\pi := \lcm(4, d_\pi)$. It is well known that
$f$ is pseudo-isotopic to a diffeomorphism if $p^2(f)$ is divisible by 224
\cite[Lemma 3.7(iii)]{7class}, so one can find $f \in \DiffSpin(M)$ such
that $p^2(f) = \lcm(224, 2\td_\pi)$. Hence equality holds in \eqref{eq:rdiff},
completing the proof of Theorem \ref{thm:rdiff}
(and hence Theorem \ref{thm:spinauto_UB}).

A key step in the above argument is that $p^2(f) \mmod 2d_\pi$ can be
determined purely algebraically, from the action $f^*$ on $H^4(M)$.
A related fact is that $p^2(f) \mmod 2\td_\pi$ can be determined by the action
of $f^*$ on \emph{Gauss refinements} associated to spin coboundaries of $M$. 
A Gauss refinement on $M$ is a function
\[ g : \divsp \to \Q/\tdf\Z \]
whose mod $\Z$ reduction is determined by the quadratic linking family $q_M^\circ$
and which satisfies
\[ g(k+t) - g(k) =
\frac{d_\pi^2 b_M(t, t) - 2d_\pi \, b_M(p_M {-} d_\pi k, \, t)}{8}
\mmod \textstyle\frac{d_\pi(M)}{4}\Z. \]
For our present purposes the significance of these conditions is that the
difference between two Gauss refinements of $M$ is just a constant
in $\Z/\tdf\Z$, and if $f \in \ADiff_{Spin}(M)$ then
\begin{equation}
\label{eq:gaussaction}
g - (f^*)^\# g = \frac{p^2(f)}{8} \mod \frac{\td_\pi}{4} ,
\end{equation}
where $F^\#g := g \circ F$ for any isomorphism $F$ of $H^4(M)$.

Let $W$ be a 3-connected coboundary of $M$, and $j : H^4(W) \to H^4(M)$ the
restriction map. We can associate a Gauss refinement $g_W$ to $W$ by setting
\[ g_W(jn) := \frac{ (p_W - d_\pi n)^2 - \sign(W)}{8} \mod \tdf \Z \]
for any $n \in H^4(W)$ such that $jn \in \divsp$ (then the image of
$p_W - d_\pi n \in H^4(W;\Q)$ has compact support, and its cup-square is
well-defined in $H^8_{cpt}(W;\Q) \cong \Q$) \cite[(18)]{7class}.
The key property of $g_W$ is that if $f : M \to M'$ is a diffeomorphism and
$W'$ is another 3-connected coboundary of $M'$ then
\begin{equation}
\label{eq:gauss}
g_{W'} - (f^*)^\#g_W =
\frac{p_X^2 - \sign(X)}{8} = 28\ahat(X) \mod \tdf ,
\end{equation}
where $X$ is the closed spin manifold $(-M) \cup_f M'$ \cite[(24)]{7class}.
In particular, the mod 28 reduction of $g_W$ is independent of the choice of
spin coboundary $W$. This defines a generalisation of the Eells-Kuiper
invariant,
\[ \mu_M : \divsp \to \Q/\gcd(28, \tdf)\Z , \]
which distinguishes between
$\gcd\left(28, \Num\big(\frac{2^r \mdiv}{8}\big) \right)$ different smooth
structures on the 
topological manifold underlying $M$ \cite[Corollary 4.14]{7class}.
Together with the homeomorphism 
invariants $(H^4(M), q_M^\circ, p_M)$, it
classifies 2-connected 7-manifolds up to diffeomorphism.

\begin{thm}[{\cite[Theorem 1.2]{7class}}]
\label{thm:classification}
For any pair of $2$-connected $7$-manifolds $M_0$ and $M_1$,
an isomorphism $F \colon H^4(M_1) \to H^4(M_0)$
is realised by a diffeomorphism $f \colon M_0 \cong M_1$ such that $F = f^*$
if and only if
$(q^\circ_{M_1}, \EK_{M_1}, p_{M_1}) = F^\#(q^\circ_{M_0}, \EK_{M_0}, p_{M_0})$.
\end{thm}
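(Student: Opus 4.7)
My plan is to prove the two directions separately, exploiting the machinery of Gauss refinements developed in \S\ref{subsec:mapping_tori} and cited from \cite{7class}.

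For necessity, suppose $f : M_0 \cong M_1$ is a (spin) diffeomorphism with $f^* = F$. The triple $(H^4, q^\circ, p)$ is preserved under any homeomorphism, so $F^\#(q^\circ_{M_1}, p_{M_1}) = (q^\circ_{M_0}, p_{M_0})$ is immediate. For $\mu$, any 3-connected spin coboundary $W_1$ of $M_1$ becomes, via $f$, a 3-connected spin coboundary $W_0$ of $M_0$ with $g_{W_0} = g_{W_1} \circ f^*$, so reducing modulo $28$ gives $(f^*)^\# \mu_{M_1} = \mu_{M_0}$.

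For sufficiency the strategy will be first to realise $F$ by an almost-diffeomorphism, and then to promote that to a genuine diffeomorphism using the Eells--Kuiper invariant. The first step invokes the almost-diffeomorphism classification of 2-connected spin 7-manifolds, which depends only on the triple $(H^4(M), q^\circ_M, p_M)$: this follows from modified surgery applied to 3-connected spin 8-coboundaries of $M_0$ and $M_1$, in the spirit of Wall's classification of $(s{-}1)$-connected $(2s{+}1)$-manifolds refined to the spin setting, and produces an almost-diffeomorphism $h : M_0 \to M_1$ with $h^* = F$. For the second step, any such $h$ corresponds to a genuine diffeomorphism $M_0 \cong M_1 \,\sharp\, \Sigma$ for a unique homotopy 7-sphere $\Sigma \in \Theta_7 \cong \Z/28\Z$; since taking the connected sum with $\Sigma$ shifts the Gauss refinement by $\mu(\Sigma)$, formula \eqref{eq:gauss} gives
\[ \mu_{M_0} - (h^*)^\# \mu_{M_1} \;=\; \mu(\Sigma) \mod 28\Z . \]
The hypothesis $F^\# \mu_{M_1} = \mu_{M_0}$ combined with $h^* = F$ forces $\mu(\Sigma) = 0$, and since the classical Eells--Kuiper invariant is faithful on $\Theta_7$, $\Sigma$ is the standard 7-sphere. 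Thus $h$ is pseudo-isotopic to a genuine diffeomorphism $f : M_0 \cong M_1$, and $f^* = h^* = F$ because pseudo-isotopies act trivially on cohomology.

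The main obstacle is the first step of sufficiency, namely producing an almost-diffeomorphism realising an arbitrary isomorphism of algebraic triples $(H^4, q^\circ, p)$. This lies outside the scope of the present paper and is the content of the surgery-theoretic classification carried out in \cite{7class}: one must study handle decompositions of 3-connected spin 8-coboundaries, construct enough self-almost-diffeomorphisms of $M_0$ to realise every element of $\aut(H^4(M_0), q^\circ_{M_0}, p_{M_0})$, and invoke the $h$-cobordism theorem. Once that step is granted, the contribution of the present setup, namely formula \eqref{eq:gauss} expressing the $\ahat$-defect as a difference of Gauss refinements, makes the Eells--Kuiper correction routine.
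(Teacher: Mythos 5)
First, a point of comparison: the paper does not prove this statement at all --- it is imported verbatim as \cite[Theorem 1.2]{7class} --- so the only proof to measure your outline against is the one in that reference. Your two-step strategy (classify up to connected sum with a homotopy $7$-sphere, then correct by the generalised Eells--Kuiper invariant) does mirror the strategy of \cite{7class}, and your necessity direction is fine: homeomorphism invariance of $(H^4, q^\circ_M, p_M)$ and the coboundary/\eqref{eq:gauss}-type argument for $\EK_M$ are exactly as in the source. You are also candid that the realisation of an arbitrary isomorphism of triples by an almost-diffeomorphism is the surgery-theoretic core of \cite{7class}, which is fair, since the present paper quotes the whole theorem anyway.

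However, the one step you claim to carry out yourself --- the ``routine'' Eells--Kuiper correction --- has a genuine gap. The generalised invariant $\EK_{M}$ is valued in $\Q/\gcd(28,\tdf)\Z$, not in $\Q/28\Z$: only the mod $\gcd(28,\tdf)$ reduction of a Gauss refinement is independent of the coboundary. Hence from $F^\#\EK_{M_1} = \EK_{M_0}$ and $M_0 \cong M_1 \sharp \Sigma$ you may only conclude $\mu(\Sigma) \equiv 0$ modulo $\gcd(28,\tdf)$, and the inference ``$\mu(\Sigma)=0$ in $\Z/28$, hence $\Sigma$ is standard'' is unjustified --- and false in general, because $2$-connected $7$-manifolds can have non-trivial inertia group; indeed the paper records right after \eqref{eq:gauss} that $\EK_M$ distinguishes only $\gcd\bigl(28, \Num\bigl(\frac{2^r\mdiv}{8}\bigr)\bigr)$ smooth structures on the underlying topological manifold, which is fewer than $28$ unless $p_M$ is sufficiently divisible. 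For the same reason $\Sigma$ is only well defined up to the inertia group of $M_1$, not unique as you assert. To close the argument one must show that every $\Sigma$ with $\mu(\Sigma)$ in the allowed subgroup is absorbed by a diffeomorphism $M_1 \sharp \Sigma \cong M_1$ acting trivially on $H^4$, i.e.\ one needs the realisation results quoted in this paper as \cite[Proposition 3.10 and Lemma 3.7(iii)]{7class} together with the action \eqref{eq:gaussaction} on Gauss refinements; this is substantive content of the classification, not a consequence of \eqref{eq:gauss} alone. As written, your argument establishes the theorem only in the special case $\gcd(28,\tdf)=28$.
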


\subsection{The $\xi$-invariant}
\label{subsec:xi}

We now give the definition of the $\xi$-invariant of a \gtstr{} $\varphi$, which is a
function $\xi(\varphi) : \divsp \to \Q/3\td_\pi\Z$ (with $\divsp$ as in
\eqref{eq:divsp}).
We also explain how the pair $(\nu,\xi)$ distinguishes between
$24 \NumB{\frac{2^r\dinfty{M}}{224}}$ deformation-equivalence classes on a spin
7-manifold $M$. This entails Theorem \ref{thm:spinauto_LB}, and when $M$ is
2-connected Theorem \ref{thm:spinauto_UB} implies Theorem \ref{thm:nuxi},
that $(\nu,\xi)$ is a complete invariant of $\GTD(M)$.

\begin{defn}
\label{def:xi}
Let $\varphi$ be a \gtstr{} on a closed 7-manifold with
$Spin(7)$-coboundary~$W$. The $\xi$-invariant of $\varphi$ is
the function
\[ \xi(\varphi) := 7(\chi(W) - 3\sign(W)) +12 g_W \; : \;
\divsp \to \Q/3\td_\pi \Z . \]
\end{defn}
Combining \eqref{eq:defect} and \eqref{eq:gauss} shows that $\xi(\varphi)$ is
diffeomorphism-invariant (and in particular independent of the choice of $W$):
if $f : M' \to M$ is a diffeomorphism then
\[ (f^*)^\#(\xi(f^* \varphi)) = \xi(\varphi) . \]
The relations
\begin{align*}
2D(\varphi, \varphi') &= \nu(\varphi') - \nu(\varphi) \mod 48 , \\
14D(\varphi, \varphi') &= \xi(\varphi') - \xi(\varphi) \mod 3\td_\pi,
\end{align*}
for \gtstr s $\varphi$ and $\varphi'$ on the same $M$
mean that $(\nu,\xi)$ determine
$D$ mod $\lcm(24, \Num\big(\frac{3\td_\pi}{14}\big))$.
Moreover, these relations help us see that precisely
$\lcm(24, \Num\big(\frac{3\td_\pi}{14}\big)) = 24 \NumB{\frac{d_\pi}{112}}$
pairs $(\nu,\xi)$ are realised, namely the ones satisfying
\begin{subequations}
\begin{align}
\nu &= \chi_\Q(M) \mod 2 ,\\
\label{eq:recover_ek}
\frac{\xi - 7\nu}{12} &= \mu_M \mod \gcd(28,\tdf) .
\end{align}
\end{subequations}
However, this does not mean that there are $24 \NumB{\frac{d_\pi}{112}}$
different deformation-equivalence classes, as one has to take into account that
$f \in \DiffSpin(M)$ acts non-trivially on Gauss refinements and hence
on $\xi$: \eqref{eq:gaussaction} implies
\[ \xi(f^*\varphi) - \xi(\varphi) =  \frac{3}{2} p^2(f) \mod 3\td_\pi. \]
Using Theorem \ref{thm:rdiff}, the mod $2^{r-1}3\dinfty{M}$ reductions of
$\xi$ of deformation-equivalent \gtstr s on $M$ must still be equal.
Hence we can use $(\nu,\xi)$ to distinguish between at least
\[ \lcm \left(24, \NumB{\frac{2^{r-1}3\dinfty{M}}{14}}\right) =
24 \NumB{\frac{2^r\dinfty{M}}{224}} \]
deformation-equivalence classes. For 2-connected $M$ this is precisely
the number of deformation-equivalence classes on $M$ according to
Theorem \ref{thm:spinauto_UB}, so $(\nu,\xi)$ distinguishes between all the
classes, completing the proof of Theorem \ref{thm:nuxi}.

Given a \gtstr, we can use \eqref{eq:recover_ek} to recover the Eells-Kuiper
invariant of the underlying smooth manifold from $(\nu, \xi)$.
Hence Theorem \ref{thm:classification} implies that we can classify
closed 2-connected manifolds with homotopy classes of \gtstr{} using the quintuple
$(H^4(M), q_M^\circ, p_M, \nu, \xi)$.

\begin{thm}
\label{thm:classify_g2}
Let $M_i$ be closed 2-connected 7-manifolds, and $\varphi_i$ \gtstr s on $M_i$.
Given an isomorphism $F : H^4(M_1) \to H^4(M_0)$, there is a diffeomorphism
$f : M_0 \to M_1$ such that $F = f^*$ and $f^* \varphi_1$ is homotopic to
$f^* \varphi_0$ if
and only if $\nu (\varphi_0) = \nu(\varphi_1)$ and
$F^\#(p_{M_0}, q_{M_0}, \xi(\varphi_0)) = (p_{M_1}, q_{M_1}, \xi(\varphi_1))$.
\end{thm}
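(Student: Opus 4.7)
\emph{Necessity.} Suppose $f$ exists. Theorem \ref{thm:defnu} gives $\nu(\varphi_0) = \nu(f^*\varphi_1) = \nu(\varphi_1)$, while the diffeomorphism invariance of $\xi$ stated in \S\ref{subsec:intro_xi} gives $\xi(\varphi_0) = \xi(f^*\varphi_1) = F^\#\xi(\varphi_1)$; the matching of $p_M$ and $q_M^\circ$ under $F$ is automatic since these are homeomorphism invariants (the first assertion of Theorem \ref{thm:classification}).

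\emph{Sufficiency.} Feed the hypotheses into \eqref{eq:recover_ek}: since $\nu$ is a scalar and therefore fixed by $F^\#$, the equation $\frac{\xi - 7\nu}{12} = \mu_M$ forces $F^\#\mu_{M_0} = \mu_{M_1}$. Now $F$ preserves the full triple $(p, q^\circ, \mu)$, so by Theorem \ref{thm:classification} there is a spin diffeomorphism $f_1 : M_0 \to M_1$ with $f_1^* = F$. Setting $\varphi_0' := f_1^*\varphi_1$, the invariance properties and the hypotheses yield $\nu(\varphi_0') = \nu(\varphi_0)$ and $\xi(\varphi_0') = \xi(\varphi_0)$. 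Proposition \ref{prop:d_nu} and Lemma \ref{lem:d_xi} then sandwich $D(\varphi_0, \varphi_0') \in \Z$ into $\lcm(24, \NumB{3\td_\pi/14})\Z = 24\NumB{d_\pi/112}\Z$.

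The remaining step is to correct $f_1$ by a spin self-diffeomorphism of $M_0$ that acts trivially on $H^4(M_0)$ and realises the needed translation: if $g \in \DiffSpin(M_0)$ satisfies $g^* = \Id$ and $D_{M_0}(g) = D(\varphi_0, \varphi_0')$, then $f := f_1 \circ g^{-1}$ obeys both $f^* = F$ and $f^*\varphi_1 \simeq \varphi_0$. To exhibit such $g$, I would invoke \cite[Proposition 3.10]{7class}, which provides $h \in \ADiff_{Spin}(M_0)$ with $h^* = \Id$ on $H^4$ and arbitrary $p^2(h) \in 2\td_\pi\Z$, and \cite[Lemma 3.7(iii)]{7class}, which pseudo-isotopes such $h$ to a genuine diffeomorphism whenever $224 \mid p^2(h)$. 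Via Proposition \ref{prop:Dandp} the $D$-values realised in this way fill the subgroup $\frac{3}{28}\lcm(224, 2\td_\pi)\Z = 24\NumB{\td_\pi/112}\Z$; and a case split on $d_\pi \bmod 4$ verifies the arithmetic identity $\NumB{\td_\pi/112} = \NumB{d_\pi/112}$, so this subgroup exactly matches the allowed range for $D(\varphi_0, \varphi_0')$.

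The main obstacle is precisely this last step: Theorem \ref{thm:nuxi} on its own merely asserts that $\varphi_0$ and $\varphi_0'$ share a deformation class, i.e., that they are related by \emph{some} spin self-diffeomorphism of $M_0$, but that diffeomorphism is a priori free to act non-trivially on $H^4$ and thereby destroy the cohomological condition $f^* = F$. Extracting a correcting diffeomorphism from the subgroup $\{g \in \DiffSpin(M_0) : g^* = \Id\}$ hinges on the quantitative content of \cite{7class} summarised above and on the arithmetic coincidence $\NumB{\td_\pi/112} = \NumB{d_\pi/112}$, which together ensure that restricting to cohomologically trivial diffeomorphisms costs nothing in the range of realisable translations.
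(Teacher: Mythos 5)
Your proposal is correct and follows the same route as the paper: recover $\mu_M$ from $(\nu,\xi)$ via \eqref{eq:recover_ek}, apply Theorem \ref{thm:classification} to realise $F$ by some $f_1$, and then correct $f_1$ by a cohomologically trivial spin diffeomorphism whose translation $D_{M_0}(g)=\tfrac{3}{28}p^2(g)$ is supplied by \cite[Proposition 3.10, Lemma 3.7(iii)]{7class} together with Proposition \ref{prop:Dandp}. The correction step you spell out --- including the matching of the constraint subgroup $24\NumB{d_\pi/112}\Z$ coming from equality of $(\nu,\xi)$ with the subgroup $24\NumB{\td_\pi/112}\Z$ realised by diffeomorphisms acting trivially on $H^4$ (with $\td_\pi=\lcm(4,d_\pi)$ as in \S\ref{sec:diffeomorphisms}) --- is exactly the content the paper leaves implicit in \S\ref{subsec:xi}, so your more explicit treatment is a faithful filling-in rather than a different argument.
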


\bibliographystyle{amsinitial}
\bibliography{g2geom}

\end{document}